\documentclass[11pt]{amsart}
\usepackage{amscd,amssymb,amsfonts,amsthm}
\usepackage[arrow,matrix]{xy}
\usepackage{xcolor}
\usepackage{mathrsfs}
\usepackage{mathtools}
\usepackage{enumitem}
\usepackage{graphicx}
\usepackage{epsfig}
\usepackage{lastpage}
\usepackage{titleps}
\usepackage[justification=centering]{caption}
\usepackage[colorlinks=true,linkcolor=blue,citecolor=purple]{hyperref}

% The bibliography %%%%%%%%%%%%%%%%%%%%%%%%%%%%%%%%%%%%%%%%%%%%%%%%%%%%%%%%%%%%%%%%%%%%%%%%%%
\usepackage[backend=bibtex,style=numeric,natbib=true]{biblatex}
\addbibresource{trilinear.bib}
%%%%%%%%%%%%%%%%%%%%%%%%%%%%%%%%%%%%%%%%%%%%%%%%%%%%%%%%%%%%%%%%%%%%%%%%%%%%%%%%%%%%%%%%%%%%%

% Header margin %%%%%%%%%%%%%%%%%%%%%%%%%%%%%%%%%%%%%%%%%%%%%%%%%%%%%%%%%%%%%%%%%%%%%%%%%%%%%
\usepackage[margin=2cm,top=2.5cm,headheight=16pt,headsep=0.25in,heightrounded]{geometry}
%%%%%%%%%%%%%%%%%%%%%%%%%%%%%%%%%%%%%%%%%%%%%%%%%%%%%%%%%%%%%%%%%%%%%%%%%%%%%%%%%%%%%%%%%%%%%

% Page header %%%%%%%%%%%%%%%%%%%%%%%%%%%%%%%%%%%%%%%%%%%%%%%%%%%%%%%%%%%%%%%%%%%%%%%%%%%%%%%
\newpagestyle{main}{
\setheadrule{.3pt} % Header rule
\sethead[\thepage][\textsc{tri-linear birational maps in dimension three}][] % even pages
{}{\textsc{laurent bus\'e},\
\textsc{pablo gonz\'alez-maz\'on},\
\textsc{josef schicho}
}{\thepage} % odd pages
} 
\pagestyle{main}
%%%%%%%%%%%%%%%%%%%%%%%%%%%%%%%%%%%%%%%%%%%%%%%%%%%%%%%%%%%%%%%%%%%%%%%%%%%%%%%%%%%%%%%%%%%%%%

%%%%%%%%%%%%%%%%%%%%%%%%%%%%%%%%%%%%%%%%%%%%%%%%%%%%%%%%%%%%%%%%%%%%%%%%%%%%%%%%%%%%%%%%%%%%%%
\usepackage{tikz-cd}
\tikzstyle{arrow}=[draw, -latex]
%%%%%%%%%%%%%%%%%%%%%%%%%%%%%%%%%%%%%%%%%%%%%%%%%%%%%%%%%%%%%%%%%%%%%%%%%%%%%%%%%%%%%%%%%%%%%%

% Page margins %%%%%%%%%%%%%%%%%%%%%%%%%%%%%%%%%%%%%%%%%%%%%%%%%%%%%%%%%%%%%%%%%%%%%%%%%%%%%%%
\topmargin=0in
\textwidth6.3in
\textheight8.5in
\oddsidemargin=0.075in
\evensidemargin=0.075in
%%%%%%%%%%%%%%%%%%%%%%%%%%%%%%%%%%%%%%%%%%%%%%%%%%%%%%%%%%%%%%%%%%%%%%%%%%%%%%%%%%%%%%%%%%%%%%

% Environments %%%%%%%%%%%%%%%%%%%%%%%%%%%%%%%%%%%%%%%%%%%%%%%%%%%%%%%%%%%%%%%%%%%%%%%%%%%%%%%
\theoremstyle{definition}
\newtheorem{lemma}{Lemma}[section]
\newtheorem{proposition}[lemma]{\textbf{Proposition}}
\newtheorem{theorem}[lemma]{\textbf{Theorem}}
\newtheorem{corollary}[lemma]{\textbf{Corollary}}

\theoremstyle{definition}

\newtheorem{example}[lemma]{\textbf{Example}}

\newtheorem{remark}[lemma]{Remark}
\newtheorem{condition}[lemma]{Condition}
\newtheorem{setup}[lemma]{Setup}
%%%%%%%%%%%%%%%%%%%%%%%%%%%%%%%%%%%%%%%%%%%%%%%%%%%%%%%%%%%%%%%%%%%%%%%%%%%%%%%%%%%%%%%%%%%%%%

%%%%%%%%%%%%%%%%%%%%%%%%%%%%%%%%%%%%%%%%%%%%%%%%%%%%%%%%%%%%%%%%%%%%%%%%%%%%%%%%%%%%%%%%%%%%%%
\numberwithin{equation}{section}
\setcounter{tocdepth}{1}
%%%%%%%%%%%%%%%%%%%%%%%%%%%%%%%%%%%%%%%%%%%%%%%%%%%%%%%%%%%%%%%%%%%%%%%%%%%%%%%%%%%%%%%%%%%%%%

% Shortcuts for math notation %%%%%%%%%%%%%%%%%%%%%%%%%%%%%%%%%%%%%%%%%%%%%%%%%%%%%%%%%%%%%%%%

\newcommand{\C}{\mathbb{C}}

\newcommand{\Z}{\mathbb{Z}}

\renewcommand{\P}{\mathbb{P}}
\newcommand{\Oc}{\mathcal{O}}

\newcommand{\Cc}{\mathscr{C}}

\newcommand{\PGL}{\mathrm{PGL}}
\newcommand{\Aut}{\mathrm{Aut}}
\newcommand{\Bir}{\mathfrak{Bir}}
\newcommand{\Birr}{\mathrm{Bir}}

\newcommand{\Proj}{\text{Proj }}

\newcommand{\blowup}{\text{Bl}}
\newcommand{\Rat}{\mathfrak{Rat}}

\newcommand{\HS}{\mathrm{HS}}

\newcommand{\HP}{\mathrm{HP}}

%%%%%%%%%%%%%%%%%%%%%%%%%%%%%%%%%%%%%%%%%%%%%%%%%%%%%%%%%%%%%%%%%%%%%%%%%%%%%%%%%%%%%%%%%%%%%%

%%%%%%%%%%%%%%%%%%%%%%%%%%%%%%%%%%%%%%%%%%%%%%%%%%%%%%%%%%%%%%%%%%%%%%%%%%%%%%%%%%%%%%%%%%%%%%

%%%%%%%%%%%%%%%%%%%%%%%%%%%%%%%%%%%%%%%%%%%%%%%%%%%%%%%%%%%%%%%%%%%%%%%%%%%%%%%%%%%%%%%%%%%%%%

%%%%%%%%%%%%%%%%%%%%%%%%%%%%%%%%%%%%%%%%%%%%%%%%%%%%%%%%%%%%%%%%%%%%%%%%%%%%%%%%%%%%%%%%%%%%%%

%%%%%%%%%%%%%%%%%%%%%%%%%%%%%%%%%%%%%%%%%%%%%%%%%%%%%%%%%%%%%%%%%%%%%%%%%%%%%%%%%%%%%%%%%%%%%%

% Document begins %%%%%%%%%%%%%%%%%%%%%%%%%%%%%%%%%%%%%%%%%%%%%%%%%%%%%%%%%%%%%%%%%%%%%%%%%%%%
\begin{document}

% Title %%%%%%%%%%%%%%%%%%%%%%%%%%%%%%%%%%%%%%%%%%%%%%%%%%%%%%%%%%%%%%%%%%%%%%%%%%%%%%%%%%%%%%
\title{Tri-linear birational maps in dimension three}

% Authors %%%%%%%%%%%%%%%%%%%%%%%%%%%%%%%%%%%%%%%%%%%%%%%%%%%%%%%%%%%%%%%%%%%%%%%%%%%%%%%%%%%%
\author{Laurent Busé}
\author{Pablo Gonz\'alez-Maz\'on}
\address{Université Côte d'Azur, Inria, 2004 route des Lucioles, 06902 Sophia Antipolis, France.}
\email{laurent.buse@inria.fr, pablo.gonzalez-mazon@inria.fr}
\author{Josef Schicho}
\address{Johannes Kepler University, RISC, Austria.}
\email{jschicho@risc.jku.at}

% Abstract %%%%%%%%%%%%%%%%%%%%%%%%%%%%%%%%%%%%%%%%%%%%%%%%%%%%%%%%%%%%%%%%%%%%%%%%%%%%%%%%%%%
\begin{abstract}
A tri-linear rational map in dimension three is a rational map $\phi: (\mathbb{P}_\mathbb{C}^1)^3 \dashrightarrow \mathbb{P}_\mathbb{C}^3$ defined by four tri-linear polynomials without a common factor. If $\phi$ admits an inverse rational map $\phi^{-1}$, it is a tri-linear birational map. In this paper, we address computational and geometric aspects about these transformations. 
We give a characterization of birationality based on the first syzygies of the entries. More generally, we describe all the possible minimal graded free resolutions of the ideal generated by these entries. With respect to geometry, we show that the set $\Bir_{(1,1,1)}$ of tri-linear birational maps, up to composition with an automorphism of $\P_\C^3$, is a locally closed algebraic subset of the Grassmannian of $4$-dimensional subspaces in the vector space of tri-linear polynomials, and has eight irreducible components. Additionally, the group action on $\Bir_{(1,1,1)}$ given by composition with automorphisms of $(\P_\C^1)^3$ defines 19 orbits, and each of these orbits determines an isomorphism class of the base loci of these transformations. 
\end{abstract}
%%%%%%%%%%%%%%%%%%%%%%%%%%%%%%%%%%%%%%%%%%%%%%%%%%%%%%%%%%%%%%%%%%%%%%%%%%%%%%%%%%%%%%%%%%%%%%

\maketitle

\vspace{-.3cm}

\section{Introduction}
\label{section: introduction}

\subsection*{Motivation} Rational maps between multi-projective and projective spaces have been studied recently, with a special emphasis on the computation of the degrees and birationality  (e.g.~\cite{BCD20,multiview,EFFECTIVE_BIGRAD,SGW16,floater}). The introduction of the multi-projective setting is motivated by applications of these rational maps to Computer-Aided Geometric Design (CAGD), computer vision, shape optimization, animation, and other related fields (\cite{SGW16} and references therein). In particular, maps defined over products of projective lines are ubiquitous in geometric modeling. For instance, tensor-product rational B\'ezier surfaces and volumes are used intensively for the definition of geometric objects. More specifically, 
dominant rational maps $(\P_\C^1)^2 \dashrightarrow \P_\C^2$ and $(\P_\C^1)^3 \dashrightarrow \P_\C^3$ provide simple and meaningful control points for the deformation of 2D and 3D shapes, in a technique known as Free-Form Deformation (FFD) (e.g.~\cite{sederberg2D,SGW16}). 
One of the requirements for these applications is the injectivity of the rational map. More interestingly, birational maps are globally injective, and have an inverse rational map. Moreover, the computation of preimages is necessary in some applications (\cite{SGW16} and references), and the inverse map allows the computation of these preimages avoiding the numerical resolution of equations.

\subsection*{Previous works and context} 
Approximately thirty years ago, modern techniques from commutative algebra were incorporated to the  study of birational maps, primarily motivated by computational purposes. Syzygy-based birationality criteria date back to \cite{SHK}, and have been further developed ever since (e.g.~\cite{RS,S}). In this context, the Jacobian Dual Criterion (JDC) is a general method to decide if a rational map between projective varieties, or from a multi-projective variety to a projective variety, is birational \cite{JDC,BCD20}. The JDC relies on the computation of the defining equations of the Rees algebra associated to the rational map. Unfortunately, the derivation of these equations entails a high computational cost, and for some applications specialized methods are preferred. 
In dimension two, there are efficient strategies for the manipulation of bi-linear birational maps $\phi: (\P_\C^1)^2 \dashrightarrow \P_\C^2$, whose entries are polynomials of bi-degree $(1,1)$ without a common factor, where a syzygy-based characterization of birationality is known \cite{sederberg2D,EFFECTIVE_BIGRAD}. Larger bi-degrees have also been considered, but only partially. Namely, sufficient conditions under which birationality holds are known for maps with entries of bi-degree $(2,2)$ \cite{EFFECTIVE_BIGRAD} and $(1,n)$ \cite{SGW16}.
In dimension three, no syzygy-based criteria for the birationality of a rational map $\phi: (\P_\C^1)^3 \dashrightarrow \P_\C^3$ are known yet, probably because the situation is much more involved.
\vskip0.2cm
On the other hand, the study of groups and spaces of birational maps has a long history in algebraic geometry, and there is an extensive literature about the topic (e.g.~\cite{hudson,Alberich,petitdegree,bisiplane,topologiesAndStructures}). The $n$-dimensional Cremona group over $\C$, denoted by either $\text{Cr}_n(\C)$ or $\Birr(\P_\C^n)$, is the group of birational endomorphisms of $\P_\C^n$.  Given $d\geq 0$, the subset $\mathfrak{Bir}_d(\P_\C^n) \subset \text{Cr}_n(\C)$ consists of all the birational endomorphisms that are determined by $n+1$ homogeneous polynomials of degree $d$, without a common factor. 
\vskip0.2cm
Remarkably, we have a good understanding of $\Bir_d(\P_\C^2)$  \cite{bisiplane} and a complete classification of the base loci is known for the degrees $d = 2,3$ \cite{petitdegree}. However, for $n\geq 3$ the situation quickly becomes more complicated. For $n=3$, the algebraic structure of $\Bir_2(\P_\C^3)$ has been described together with a complete classification of the base loci of birational endomorphisms with quadratic entries \cite{transformationsquadratiques}, but this is no longer the case if $d\geq 3$. For $d=3$, some of the irreducible components of $\Bir_3(\P_\C^3)$ have also been described \cite{cubic}, but only a partial classification of the base loci is known \cite{hudson,cubic}. 
With respect to birational maps from a multi-projective space to a projective target, only the structure and classification of the base loci of bi-linear birational maps $(\P_\C^1)^2 \dasharrow \P_\C^2$ have been addressed \cite{sederberg2D,EFFECTIVE_BIGRAD}. Namely,
the set of bi-linear birational maps is  an irreducible hypersurface in $\P_\C^{11}$, and the base locus is always a closed point.  
\vskip0.2cm
In this paper, we study the algebraic set of tri-linear birational maps $\phi: (\P_\C^{1})^3 \dasharrow \P_\C^3$ and we give the complete classification of the base loci of such transformations, up to isomorphism of schemes. This geometric analysis targets the derivation of  computational methods for the manipulation of these  birational maps. Namely, we provide a syzygy-based characterization for birationality, whose proof strongly relies on the classification of the base loci. 

\subsection*{Organization of the paper} In §\ref{section: preliminars} we introduce tri-linear birational maps in dimension three and  the concepts of \textit{base ideal}, \textit{base locus}, and \textit{type} of a tri-linear birational map. Additionally, we briefly review the Jacobian Dual Criterion. In §\ref{section: factorization of tri-linear birational maps} we explain our strategy for the factorization of tri-linear birational maps, which is crucial for the later analysis.   
The algebraic structure of the set $\mathfrak{Bir}_{(1,1,1)}$, consisting of tri-linear birational maps up to composition with automorphisms of $\P_\C^3$, is studied in §\ref{section: the algebraic set of tri-linear birational maps}. Here, we also describe  the group action on $\mathfrak{Bir}_{(1,1,1)}$ given by the automorphisms of $(\P_\C^1)^3$, that we call \textit{right-action}. 
Our first main result is Theorem \ref{theorem: algebraic structure of Bir(1,1,1)}, which states that $\mathfrak{Bir}_{(1,1,1)}$ has the structure of a locally closed algebraic subset of a certain Grassmannian and describes its irreducible components. 
Later, §\ref{section: classification} is devoted to the classification of the base loci. In Theorems \ref{theorem: classification (1,1,1)}, \ref{theorem: classification (1,1,2)}, \ref{theorem: classification (1,2,2)}, and \ref{theorem: classification (2,2,2)} we derive the complete list of the orbits that the right-action defines on $\Bir_{(1,1,1)}$, which are in correspondence with the isomorphism classes of the base loci. 
On the computational side, in §\ref{section: syzygy-based characterization} we study the syzygies of the entries of tri-linear birational maps. Namely, Theorem \ref{theorem: main computational result} is a characterization of birationality based on the first syzygies, whose proof relies  strongly on the classification of §\ref{section: classification}. More generally, in Propositions \ref{proposition: syzygies (1,1,1)}, \ref{proposition: syzygies (1,1,2)}, \ref{proposition: syzygies (1,2,2)}, and \ref{proposition: syzygies (2,2,2)} we also classify all the possible minimal tri-graded free resolutions of the base ideal of a tri-linear birational map.

\subsection*{Acknowledgements} The authors are grateful to the anonymous reviewers for their comments which significantly improved the content and presentation of this paper. The three authors have received funding from the European Union’s ``\textsc{Hori\-zon 2020 research and innovation programme}'', under the Marie Skłodowska-Curie grant agreement n$^\circ$ 860843. All the examples, and multiple computations not explicitly included, were performed using the computer algebra software \textsc{Macaulay2} \cite{M2}.

\section{Preliminars}
\label{section: preliminars}

\subsection{Tri-linear birational maps in dimension three}
\label{subsection: tri-linear rational maps}

A tri-linear rational map in dimension three $\phi$ is a rational map
\begin{eqnarray}
\label{eq: tri-linear rational map}
	\phi: \P_\C^1\times \P_\C^1 \times \P_\C^1 & \dasharrow & \P_\C^3 \\ \nonumber
	(x_0:x_1)\times(y_0:y_1)\times(z_0:z_1) & \mapsto & (f_0:f_1:f_2:f_3)\ ,
\end{eqnarray}
where $f_i = f_i(x_0,x_1,y_0,y_1,z_0,z_1)$ is a tri-linear polynomial, i.e$.$ is linear and homogeneous with respect to the three pairs of variables $\{x_0,x_1\}$, $\{y_0,y_1\}$, and $\{z_0,z_1\}$, and the $f_i$'s do not have a common factor. For simplicity in the notation, we will refer to them simply as tri-linear rational maps. If $\phi$ admits an inverse rational map, then $\phi$ is a tri-linear birational map.
\vskip0.2cm
Let $A_1=\C[x_0,x_1]$, $A_2=\C[y_0,y_1]$, and $A_3=\C[z_0,z_1]$ be the graded homogeneous coordinate rings of each $\P_\C^1$, with the standard grading in each ring given by the corresponding pair of variables. The tri-homogeneous coordinate ring of the product $X\coloneqq(\P_\C^1)^3$ is the tensor product ring $R\coloneqq A_1\otimes_\C A_2 \otimes_\C A_3$, which inherits a tri-graded structure $$R=\bigoplus_{(i,j,k)\in\mathbb{Z}^3} \, R_{(i,j,k)}
=
\bigoplus_{(i,j,k)\in\mathbb{Z}^3} \, (A_1)_i \otimes_\C (A_2)_j \otimes_\C (A_3)_k
\ ,$$
where subindices stand for graded components. The irrelevant ideal of $R$ is
$$
\mathfrak{N} = 
\bigoplus_{i>0,\,j>0,\,k>0} \, (A_1)_i \otimes_\C (A_2)_j \otimes_\C (A_3)_k
=
(x_0,x_1)
\cap
(y_0,y_1)
\cap
(z_0,z_1)
\ .
$$
The tri-homogeneous ideal $B_\phi =(f_0,f_1,f_2,f_3)\subset R$ is the \textit{base ideal} of $\phi$. The subscheme $Z_\phi$ of $X$ defined by the base ideal $B_\phi$ is the \textit{base locus} or \textit{base locus scheme} of $\phi$. The codimension of $Z_\phi$ is always at least two, as otherwise we find a common factor to the $f_i$'s. On the other hand, a surface $S\subset X$ is the vanishing locus of a tri-homogeneous polynomial in $R$. The \textit{tri-degree of }$S$, denoted by tri-deg$(S)$, is the tri-degree of its defining polynomial.

\subsection{Curves in $(\P_\C^1)^3$}
\label{subsection: curves in X} 
Let $Y$ be a one-dimensional subscheme of $X$. 
The \textit{tri-degree of} $Y$, denoted by $\text{tri-deg}(Y)$, is the triple $(d_1,d_2,d_3)$ where $d_i$ is the number of points, counted with multiplicity, in the intersection of $Y$ with the subspace of $X$ determined by a general linear form in $A_i$. Equivalently, $d_i$ is the degree of the projection of $Y$ onto the $i$-th $\P_\C^1$ factor of $X$. For zero-dimensional subschemes of $X$, we define the tri-degree as $(0,0,0)$.
\vskip0.2cm
If all the irreducible components of $Y$ have dimensione one, we say that $Y$ is a curve. In this situation, we write $I_Y\subset R$ for its defining ideal.
On the other hand, recall that $Y$ is Cohen-Macaulay if for every point $P\in Y$ the local ring of $Y$ at $P$ is Cohen-Macaulay. In particular, $Y$ is a Cohen-Macaulay curve if and only if it does not have embedded nor isolated points \cite[Lemma 31.4.4]{stacks-project}. 
\vskip0.2cm
We define $C_Y\subset X$ as the subscheme determined by the intersection of all the associated primes to $I_Y$ in $R$ of codimension exactly two. Clearly, $C_Y\subset Y$ is a Cohen-Macaulay curve. Additionally, when $Y = Z_\phi$ we denote $C_Y$ simply by $C_\phi$, and $I_{C_Y}$ simply by $I_\phi$.
\vskip0.2cm
\subsection{The Jacobian Dual Criterion} 
\label{subsection: jacobian dual criterion}
Let $J$ be the kernel of the ring homomorphism
\begin{eqnarray*}
	R\, [t_0,t_1,t_2,t_3] & \rightarrow & R\, [u] \\
	t_i & \mapsto & f_i\, u \ \ \ \ \ \ \ .
\end{eqnarray*} 
The ideal $J$ is the \textit{Rees ideal} associated to $\phi$. 
It is a prime ideal hence saturated with respect to $\mathfrak{N}$. Moreover, it defines the scheme-theoretic graph of $\phi$ in the product $X \times \P_\C^3$. 
\vskip0.2cm
The Rees ideal $J$ is equipped with a $\Z^4$-graded structure, as it inherits the $\Z^3$-grading from $R$ and the standard $
\Z$-grading from $\C[t_0,t_1,t_2,t_3]$. We use the notation $(d_1,d_2,d_3;d_4)$ for this multi-degree in $\mathbb{Z}^4$. More explicitly, we have
$$g = g(x_0,x_1,y_0,y_1,z_0,z_1,t_0,t_1,t_2,t_3) \in J_{(d_1,d_2,d_3;d_4)}$$
if $g$ is homogeneous of degree $d_1$ in  $x_0,x_1$, of degree $d_2$ in $y_0,y_1$, of degree $d_3$ in $z_0,z_1$, and of degree $d_4$ in $t_0,t_1,t_2,t_3$, and additionally 
$$
g(x_0,x_1,y_0,y_1,z_0,z_1,f_0,f_1,f_2,f_3)=0\ .
$$
The elements in $J_{(d_1,d_2,d_3;1)}$ are in correspondence with the first syzygies of the $f_i$'s with coefficients of tri-degree $(d_1,d_2,d_3)$ in $R$. We refer to these as syzygies of tri-degree $(d_1,d_2,d_3)$.
\vskip0.2cm
The multi-graded Jacobian Dual Criterion (JDC) \cite[Theorem 4.4]{BCD20} characterizes birationality in terms of the ideal $J$. Namely, given $(d_1,d_2,d_3)\in \Z^3$ consider the $\C[t_0,t_1,t_2,t_3]$-module 
$$
J_{(d_1,d_2,d_3;*)} \coloneqq \bigoplus_{d_4\in \, \Z} J_{(d_1,d_2,d_3;d_4)}\ .
$$
Assuming that $\phi$ is dominant, the JDC establishes that $\phi$ is birational if and only if the modules 
$$
J_{(1,0,0;*)} \ ,\ J_{(0,1,0;*)}\ ,\ J_{(0,0,1;*)}
$$ 
all have rank one when regarded as vector spaces over the field $\C(t_0,t_1,t_2,t_3)$. 
In particular, we find minimal integers $r_1,r_2,r_3\in \mathbb{Z}$ such that 
\begin{equation}
\label{eq: inverse and syzygies}
\begin{vmatrix}
x_0 & x_1 \\
a_0 & a_1
\end{vmatrix} 
\in J_{(1,0,0;\, r_1)} 
\ ,\
\begin{vmatrix}
y_0 & y_1 \\
b_0 & b_1
\end{vmatrix}
\in J_{(0,1,0;\, r_2)} 
\ ,\
\begin{vmatrix}
z_0 & z_1 \\
c_0 & c_1
\end{vmatrix} 
\in J_{(0,0,1;\, r_3)}\ ,
\end{equation}
for some polynomials $a_i,b_j,c_k\in \C[t_0,t_1,t_2,t_3]$. 
Thus, we find $s,t,u\in R$ satisfying 
$$
a_i 
\xmapsto{t_l\mapsto f_l}
x_i\, s
\ ,\ 
b_j 
\xmapsto{t_l\mapsto f_l}
y_j\, t
\ ,\
c_k
\xmapsto{t_l\mapsto f_l}
z_k \, u
\ ,
$$
and the rational map
\begin{eqnarray}
\label{eq: rational inverse}
	\phi^{-1}: \P_\C^3 & \dasharrow & X = \P_\C^1 \times \P_\C^1 \times \P_\C^1 \\ \nonumber
	(t_0:t_1:t_2:t_3) & \mapsto & (a_0:a_1)\times(b_0:b_1)\times(c_0:c_1) 
\end{eqnarray}
must be the inverse of $\phi$, as the composition $\phi^{-1}\circ\phi$ yields the identity in any open subset of $X$ where $s,t$, and $u$ are all non-zero.
\subsection{The type of $\phi$}
\label{subsection: possible types of inverse}
Assume that $\phi$ is birational, and  maintain the notation of §\ref{subsection: jacobian dual criterion}. We define the \textit{type of} $\phi$ to be the triple $
(r_1 , r_2 , r_3)
=
(\deg a_i , \deg b_j , \deg c_k)\in\Z^3$. Notice that this is well defined, as we are assuming $a_0,a_1$ (resp$.$ $b_0,b_1$ and $c_0,c_1$) of minimal degree.
\vskip0.2cm
Given a general line $\ell\subset \P_\C^3$ the image $\phi^{-1}(\ell)$ is a closed rational curve in $X$. Additionally, writing $\pi_i : X \xrightarrow{} \P_\C^1$ for the projection of $X$ onto the $i$-th factor, the composition $\pi_1 \circ \phi^{-1}|_{\ell}$
\begin{alignat*}{5}
    \P_\C^1 \cong \ell \subset \P_\C^3 & \xrightarrow{\phi^{-1}|_{\ell}} & \phi^{-1}(\ell) \subset X = \P_\C^1 \times \P_\C^1 \times \P_\C^1 & \xrightarrow{\pi_1}  && \ \ \ \P_\C^1 \\
    (t_0:t_1:t_2:t_3) & \mapsto & (a_0:a_1)\times (b_0:b_1)\times (c_0:c_1) & \mapsto && (a_0:a_1)\ ,
\end{alignat*}
yields an endomorphism of $\P_\C^1$ of degree $r_1$. Similarly, the projections onto the other factors yield endomorphisms of degree $r_2$ and $r_3$, implying that the tri-degree of $\phi^{-1}(\ell)$ is $(r_1,r_2,r_3)$. 
On the other hand, let $(\sum_{i=0}^3 \alpha_i t_i, \sum_{i=0}^3 \beta_i t_i )$ be the defining ideal of $\ell$ in $\C [t_0,t_1,t_2,t_3]$. Then, the tri-homogeneous ideal 
$$
\left( \alpha_0 f_0 +  \alpha_1 f_1 +  \alpha_2 f_2 +  \alpha_3 f_3 \, , \, \beta_0 f_0 +  \beta_1 f_1 +  \beta_2 f_2 +  \beta_3 f_3  \right)
$$
defines a scheme in $X$ of tri-degree $(2,2,2)$, which is the scheme-theoretic union of $\phi^{-1}(\ell)$ and the base locus $Z_\phi$. Therefore, it follows the identity
\begin{equation}
\label{eq: equality on type of inverse}
\text{tri-deg}(\phi^{-1}(\ell)) + \text{tri-deg}( Z_\phi ) = (r_1,r_2,r_3) + \text{tri-deg}(Z_\phi ) = (2,2,2)
\end{equation}
(a projective version of \eqref{eq: equality on type of inverse} can be found in \cite[§$1$]{cubic}). In particular, we obtain the following pairing between the possible types of $\phi$ and the tri-degree of the base locus $Z_\phi$.
\begin{table}[h]
    \centering
    \begin{tabular}{c|c}
    \hline
      Type of $\phi$   & Tri-degree of $Z_\phi$ \\
    \hline\hline
      (1,1,1)  &  (1,1,1) \\
      (1,1,2) (and permutations) &  (1,1,0) (and permutations) \\
      (1,2,2) (and permutations)  &  (1,0,0) (and permutations) \\
      (2,2,2)  &  (0,0,0) \\
    \hline
    \end{tabular}
    \newline
    \centering
    \caption{All the possible types of $\phi$, and the tri-degrees of $Z_\phi$.}
    \label{table: possible types of inverse}
\end{table}
\vskip-0.5cm
\noindent From \textsc{Table} \ref{table: possible types of inverse} it follows that if $\phi$ has type $(2,2,2)$ the base locus $Z_\phi$ is zero-dimensional.
\vskip0.4cm
\subsection{Automorphisms of $(\P_\C^1)^3$}
\label{subsection: automorphisms of X}
Let $\Aut(X)$ be the group of automorphisms of $X$. Consider the normal subgroup $H\trianglelefteq \Aut(X)$ of all the automorphisms that preserve the tri-degrees  in $X$, i.e$.$ those of the form
$$
(x_0:x_1)\times (y_0:y_1)\times (z_0:z_1)
\mapsto
\eta_1(x_0:x_1)\times \eta_2(y_0:y_1)\times \eta_3(z_0:z_1)
\ ,
$$
for some $\eta_i$'s in the projective general linear group $\PGL (2,\C)$. Clearly, we have an isomorphism $H\cong \PGL (2,\C)^3$. On the other hand, the automorphism given by
\begin{eqnarray*}
	X & \xrightarrow{} & X \\ \nonumber
	(x_0:x_1)\times(y_0:y_1)\times(z_0:z_1)  & \mapsto & (y_0:y_1)\times(z_0:z_1)\times(x_0:x_1) 
\end{eqnarray*}
sends curves of tri-degree $(d_1,d_2,d_3)$ to curves of tri-degree $(d_2,d_3,d_1)$, so it does not preserve the tri-degrees in $X$. In general, every  automorphism of $X$ sends curves of tri-degree $(d_1,d_2,d_3)$ to curves of tri-degree $(d_i,d_j,d_k)$ for some indices $i,j,k$ such that $\{i,j,k\}=\{1,2,3\}$, i.e$.$ it might permute the tri-degrees. 
Let $K\leq \Aut(X)$ be the subgroup consisting of the  automorphisms that simply permute the factors of $X$, which is isomorphic to the symmetric group $\mathfrak{S}_3$ acting on three elements. In particular, the intersection $H\cap K$ is trivial. Moreover, the association 
\begin{eqnarray}
	\label{eq: semi-direct product}
	K\times H & \xrightarrow{} & \Aut(X) \\
	 \nonumber
	(\sigma,\eta) & \mapsto & \sigma\circ\eta
\end{eqnarray}
is a bijection of sets. 
Therefore, the semidirect product of groups 
$
K \rtimes H \cong \mathfrak{S}_3 \rtimes \PGL(2,\C)^3
$ 
is isomorphic to $\Aut(X)$. 

\section{Factorization of tri-linear birational maps}
\label{section: factorization of tri-linear birational maps}

In this section we explain the strategy followed in our geometric analysis. It is inspired by \cite{transformationsquadratiques}, where the classification of birational endomorphisms of $\P_\C^3$ with quadratic entries is addressed. The idea is the factorization of tri-linear rational maps through a linear system and a  projection.

\subsection{Factorization of tri-linear rational maps}
\label{subsection: factorization of tri-linear rational maps}

Let $W$ be a $\C$-vector space satisfying 
\begin{equation}
\label{eq: vector space W}
(B_\phi)_{(1,1,1)} \subset W
\subset R_{(1,1,1)}
\ ,
\end{equation}
and let $N + 1 = \dim_\C W$. Given a basis of the dual vector space $W^\vee$, let $\zeta_{W}: X \dashrightarrow \P (W^\vee )$ be the rational map whose entries are the dual vectors in this basis, and  write $Y_W$ for the Zariski closure in $\P(W^\vee)$ of the image of $\zeta_{W}$. In particular, we have the commutative diagram

\begin{equation}
\label{eq: commutative diagram}
\begin{tikzcd}
& Y_W \arrow[r,hookrightarrow] &  \P \left( W^\vee \right) \cong \P_\C^N \arrow[d,dashrightarrow, "\pi_L"] \\
\blowup_W X \subset X\times \P_\C^{N} \arrow[r,rightarrow, "\pi_W"] \arrow[ur,rightarrow,"\Pi_W"] & X \arrow[r,dashrightarrow, "\phi"] \arrow[u,dashrightarrow, "\zeta_W"] & \P_\C^3 
\end{tikzcd}
\end{equation}
\noindent
where $\pi_W: \blowup_W X \xrightarrow{} X$ is the blow-up of $X$ along the subscheme defined by the entries of $\zeta_W$, $\Pi_W : \blowup_W X \xrightarrow{} Y_W$ is the projection onto the second factor, 
and $\pi_{L}:\P(W^\vee)\dashrightarrow \P_\C^3$ is the linear projection from some suitable subspace $L\subset \P (W^\vee)$. If $N=3$, then $\P (W^\vee) \cong \P_\C^3$ and $\pi_L$ is just an automorphism of $\P_\C^3$. 
It follows from Diagram \eqref{eq: commutative diagram} that $\phi$ is birational if and only if both $\zeta_W$ and the restriction of $\pi_L$ to $Y_W$ are birational. 
On the other hand, given a point 
\begin{equation}
\label{eq: point Q in X}
Q=(\alpha_0:\alpha_1)\times (\beta_0:\beta_1)\times (\gamma_0:\gamma_1)
\in 
X
\end{equation} 
we define the polynomials in $R$
$$
\Delta_1^Q \coloneqq 
\begin{vmatrix}
x_0 & x_1 \\
\alpha_0 & \alpha_1
\end{vmatrix}
\ ,\ 
\Delta_2^Q \coloneqq  
\begin{vmatrix}
y_0 & y_1 \\
\beta_0 & \beta_1
\end{vmatrix}
\ ,\ 
\Delta_3^Q \coloneqq  
\begin{vmatrix}
z_0 & z_1 \\
\gamma_0 & \gamma_1
\end{vmatrix}
\ .
$$
Before making specific choices for the vector space $W$, we first prove the following result.

\begin{lemma}
\label{lemma: condition on 0dim base locus}
Let $\phi$ be birational of type $(2,2,2)$. Then, there exists a unique point $Q$, as in \eqref{eq: point Q in X}, 
such that for every $0 \leq l \leq 3$ the entries of $\phi$ satisfy
\begin{equation}
\label{eq: condition on 0dim base locus}
f_l
\in
\left(
\lambda
\
\Delta_1^Q
\, y_j \, z_k
+
\mu
\
x_i \, 
\Delta_2^Q
\, z_k
+
\nu
\
x_i \, y_j \, 
\Delta_3^Q
\right)
+
\left(
\Delta_1^Q,\Delta_2^Q,\Delta_3^Q
\right)^2
\end{equation}
for some $\lambda,\mu,\nu\in\C^*$ and indices $0\leq i,j,k \leq 1$ where $\alpha_i,\beta_j,\gamma_k$ are all non-zero.
\end{lemma}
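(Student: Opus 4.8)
The plan is to reduce the statement to a first-order computation at the base locus and to feed in the relations between $\phi$ and $\phi^{-1}$ recorded in §\,2.3. Since $\phi$ has type $(2,2,2)$, Table~\ref{table: possible types of inverse} gives that $Z_\phi$ is zero-dimensional. Using the $H\cong\PGL(2,\C)^3$-action of §\,2.5 I would move a chosen base point $Q$ to $(1{:}0)\times(1{:}0)\times(1{:}0)$, so that $i=j=k=0$ and $\Delta_1^Q,\Delta_2^Q,\Delta_3^Q$ become, up to sign, $x_1,y_1,z_1$, generating $\mathfrak m_Q$. The quotient $R_{(1,1,1)}/\big(\mathfrak m_Q^2\cap R_{(1,1,1)}\big)$ is then spanned by the classes of $x_0y_0z_0$ and of the three monomials $x_1y_0z_0,\,x_0y_1z_0,\,x_0y_0z_1$, and the coefficient of $x_0y_0z_0$ in $f_l$ is just $f_l(Q)$. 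Hence the asserted membership is equivalent to the first-order statement: $Q\in Z_\phi$ (all these coefficients vanish) and, for each $l$, the image of $f_l$ in $\mathfrak m_Q/\mathfrak m_Q^2$ has all three coordinates nonzero.

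Next I would record the constraints coming from the inverse. By §\,2.3 the quadratic entries $a_i,b_j,c_k$ of $\phi^{-1}$ satisfy $a_i(f)=x_is$, $b_j(f)=y_jt$, $c_k(f)=z_ku$, with $s,t,u\in R$ of tri-degrees $(1,2,2),(2,1,2),(2,2,1)$. Evaluating at a point of $Z_\phi$ kills every $f_l$, hence $a_i(f)=a_i(0)=0$; since $x_0,x_1$ do not both vanish this forces $s=0$ on $Z_\phi$, and likewise $t=u=0$. Each of $s,t,u$ is moreover linear in one of the three variable pairs, so the equation $s=0$ expresses $(x_0{:}x_1)$ in terms of the remaining coordinates on $Z_\phi$, and symmetrically for $t,u$. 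Combining these three ``graph'' conditions with the degree count --- three general members of $\langle f_0,\dots,f_3\rangle$ meet in the triple self-intersection $6$ of a divisor of tri-degree $(1,1,1)$, exactly one point of which is mobile because $\phi$ is birational --- I would pin the support of $Z_\phi$ down to a single point $Q$, which also yields the uniqueness in the statement.

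The nondegeneracy of the linear parts is the crux. Differentiating $a_i(f)=x_is$ at $Q$ and using that the $a_i$ are homogeneous quadrics (so their linear parts vanish at the origin) and that $s(Q)=0$, one gets $x_i(Q)\,ds|_Q=0$, so $s$, $t$ and $u$ all vanish to order at least two at $Q$; matching the order-two parts then gives $\tfrac{1}{2}\sum_{m,n}\partial_{t_m}\partial_{t_n}a_i(0)\,\ell_m\ell_n=x_i(Q)\,s_2$, where $\ell_l$ denotes the linear part of $f_l$ and $s_2$ the leading part of $s$, together with the analogues for $b_j,t$ and $c_k,u$. I would then argue that if the $x$-coordinate of some $\ell_l$ vanished, the first projection $\pi_1\circ\phi^{-1}$ would acquire degree strictly less than two, contradicting the type being exactly $(2,2,2)$ through the tri-degree identity of §\,2.4; the same applies to the $y$- and $z$-coordinates. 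This translation of ``each factor of $\phi^{-1}$ genuinely has degree two'' into ``no $\ell_l$ is tangent to a coordinate hyperplane at $Q$'' is the step I expect to be hardest, since it must also exclude the degenerate possibility that some $f_l$ vanishes to order two at $Q$ (i.e.\ $\ell_l=0$).

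Finally, reading off the coordinates of $\ell_l$ as the required $\lambda,\mu,\nu\in\C^*$ (with $i=j=k=0$ after the normalization, so that $\alpha_0,\beta_0,\gamma_0\neq 0$) gives the membership for every $l$, while uniqueness of $Q$ was already obtained in the second step. Undoing the $H$-action returns the statement for the original $\phi$.
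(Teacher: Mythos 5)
There is a genuine gap, on two counts. First, the step in which you ``pin the support of $Z_\phi$ down to a single point'' is false: for a birational map of type $(2,2,2)$ the base locus is zero-dimensional but need not be supported at one point. The representative $\rho_1^{(2,2,2)}$ of Theorem \ref{theorem: classification (2,2,2)} has base ideal $(x_0,y_0,z_0)\cap(x_1y_0z_0+x_0y_1z_0+x_0y_0z_1,\,x_1^2,\,x_1y_1,\,x_1z_1,\,y_1^2,\,y_1z_1,\,z_1^2)$, supported at the two distinct points $(0{:}1)^3$ and $(1{:}0)^3$. Since both your existence and your uniqueness of $Q$ rest on this claim, that part of the argument collapses: the uniqueness asserted in the lemma concerns the contact condition \eqref{eq: condition on 0dim base locus}, not the cardinality of the support of $Z_\phi$, and your degree count (length $5$ residual to one mobile point) cannot distinguish which point of $Z_\phi$ is the distinguished one.

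Second, the heart of the lemma --- that the images of \emph{all four} $f_l$ in $\mathfrak{m}_Q/\mathfrak{m}_Q^2$ are proportional to one common vector $(\lambda,\mu,\nu)$ with all entries nonzero --- is never established. Your first-order translation actually drops it: you require that each $f_l$ individually have all three coordinates nonzero in $\mathfrak{m}_Q/\mathfrak{m}_Q^2$, whereas \eqref{eq: condition on 0dim base locus} requires all these images to lie on the single line spanned by $(\lambda,\mu,\nu)$ (and permits some of them to vanish, so the case $\ell_l=0$ need not be excluded at all). Matching order-two parts of $a_i(f)=x_i s$ only yields relations among \emph{quadratic forms} in the $\ell_l$, which does not force the $\ell_l$ to be pairwise proportional; you flag this as the hardest step but do not supply an argument. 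The paper reaches the conclusion by a different mechanism: for a general line $\ell\subset\P_\C^3$, the curve $\phi^{-1}(\ell)$ is cut out by two general members of the linear system (using $\operatorname{codim}B_\phi=3$ and that $R$ is Cohen--Macaulay), a Hilbert series computation via the Segre embedding shows it is a rational curve of arithmetic genus one, hence it has a unique singular point $Q$, which lies in $Z_\phi$ by Bertini; the rank-one condition on the Jacobian at $Q$, holding for \emph{all} general $\ell$, forces every gradient $\nabla f_l(Q)$ to be proportional to a common $(\lambda,\mu,\nu)$, and $\lambda\mu\nu\neq 0$ because otherwise the tri-linearity of the $f_l$ would place a line inside the zero-dimensional $Z_\phi$. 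Some argument of this kind, quantifying over all lines $\ell$, is needed to close the gap.
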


\begin{proof}
Consider the Segre embedding 
$ \sigma : X \hookrightarrow \Sigma \subset \P_\C^7$ of $X$, 
where the Segre variety $\Sigma$ is smooth of degree 6. From this isomorphism, it follows that the Hilbert series of $\Sigma$ is 
$$
\HS_\Sigma(t)=\sum_{d\geq 0} \dim_\C \Gamma(X,\Oc_X(d,d,d))\ t^d=\sum_{d\geq 0} (d+1)^3 \, t^d=\frac{1+4t+t^2}{(1-t)^4}\ ,
$$ 
where $(1-t)^4\, \HS_\Sigma(t)$ is a polynomial because $\dim \Sigma=3$. 
On the other hand, given a general line $\ell\subset \P_\C^3$  the image $\phi^{-1}(\ell)$ is a closed, irreducible curve in $X$. 
Moreover, if $(\sum_{i=0}^3 \alpha_i \, t_i, \sum_{i=0}^3 \beta_i \, t_i )$ is the defining ideal of $\ell$, then the ideal
$$
I' = \left(\sum_{i=0}^3 \alpha_i \, f_i, \sum_{i=0}^3 \beta_i \, f_i \right)
$$
defines $\phi^{-1}(\ell)$. More specifically, $I'$ is the ideal of  the scheme-theoretic union of $\phi^{-1}(\ell)$ and the base locus $Z_\phi$. As $\text{codim}\, I' = 2$ and $R$ is a Cohen-Macaulay ring, all the associated primes to $I'$ have codimension two. However we have $\dim Z_\phi = 0$, so $\text{codim}\, B_\phi = 3$. Therefore, $I'$ must be the defining ideal of $\phi^{-1}(\ell)$. In particular, 
the curve $\sigma(\phi^{-1}(\ell))$ is determined by two hyperplane sections in $\P_\C^7$, and its Hilbert series \cite[Chapter 5]{SingularBook} is then 
$$ \HS_{ \sigma(\phi^{-1}(\ell)) }(t) = \HS_{ \phi^{-1}(\ell) }(t) = (1-t)^2 \, \HS_\Sigma(t) =\frac{1+4t+t^2}{(1-t)^2}
\ ,
$$
implying that the Hilbert polynomial of $\phi^{-1}(\ell)$ is $\HP_{\phi^{-1}(\ell)}(t) = 6\, t$. In particular, the arithmetic genus of $\phi^{-1}(\ell)$ is one. By definition $\phi^{-1}(\ell)$ is a rational curve and its normalization is $\P_\C^1$. Then, from \cite[Chapter 4, Exercise 1.8]{HartshorneBook} $\phi^{-1}(\ell)$ has a unique singular point $Q$. As $\Sigma$ is smooth, by the strong Bertini's theorem \cite[Theorem 0.5]{EiHa16} the curve $\phi^{-1}(\ell)$ is smooth outside $Z_\phi$. Therefore, $Q$ is supported in $Z_\phi$.
\vskip0.2cm
Without loss of generality, we can assume that $Q$ belongs to the affine subset $U\subset X$ given by $x_0\not = 0,$ $y_0\not = 0,$ and $z_0\not = 0$, i.e$.$ we can assume indices $(i,j,k) = (0,0,0)$ in the statement. From the Jacobian criterion \cite[§$1$ Exercise $5.8$]{HartshorneBook} in $U$, as $Q$ is a singular point of $\phi^{-1}(\ell)$ the rank of the Jacobian matrix 
\begin{gather*}
    \text{Jac}_{\phi^{-1}(\ell)} =
    \begin{pmatrix}
    \sum_{l = 0}^3 \alpha_l\, \partial f_l / \partial x_1 & 
    \sum_{l = 0}^3 \alpha_l\, \partial f_l / \partial y_1 & 
    \sum_{l = 0}^3 \alpha_l\, \partial f_l / \partial z_1 \\ 
    \sum_{l = 0}^3 \beta_l \,\partial f_l / \partial x_1 & 
    \sum_{l = 0}^3 \beta_l \, \partial f_l / \partial y_1 & 
    \sum_{l = 0}^3 \beta_l\, \partial f_l / \partial z_1
    \end{pmatrix}
\end{gather*}
evaluated at $Q$ is one. Equivalently, writing $\nabla f_l = (\partial f_l / \partial x_1 \, , \, \partial f_l / \partial y_1 \, , \, \partial f_l / \partial z_1)$ then  
$$
\sum_{l = 0}^3 \alpha_l \, \nabla f_l(Q) \  ,\ 
\sum_{l = 0}^3 \beta_l \, \nabla f_l(Q)  
$$ 
must be proportional vectors.  
This is true for any choice of a general $\ell\subset \P_\C^3$, i.e$.$ for general values of the $\alpha_l$'s and $\beta_l$'s. Therefore, it can only occur if the $\nabla f_l(Q)$'s are all proportional to a vector $(\lambda,\mu,\nu)\in\C^3$. In particular, from the Taylor expansion of $f_l$ centered at $Q$ we can write
$$
f_l = \delta_l \, 
(\lambda \, \Delta_1^Q \, y_0 \, z_0 +
\mu \, x_0 \, \Delta_2^Q \, z_0 + 
\nu \, x_0 \, y_0 \, \Delta_3^Q
)
+
h_l
\ ,
$$
for some $\delta_l\in\C$ and $h_l\in (\Delta_1^Q, \Delta_2^Q, \Delta_3^Q )^2$. Moreover, if $\lambda = 0$ (resp$.$ $\mu = 0$ or $\nu = 0$) we find a line in the base locus $Z_\phi$, against $\dim Z_\phi = 0$. Hence, $\lambda,\mu,\nu$ are all non-zero. 
\end{proof}

\begin{remark}
Recall that a point $Q$ in the base locus $Z_\phi$ is a \textit{point of contact} to a surface $S\subset X$ smooth at $Q$ if $B_\phi \subset I(S) + I(Q)^2$. 
Geometrically, if $\phi$ is birational of type $(2,2,2)$, Lemma \ref{lemma: condition on 0dim base locus} establishes that every surface in the linear system spanned by the $f_l$'s has $Q$ as a point of contact to the surface given by
$$
\lambda
\
\Delta_1^Q
\, y_j \, z_k
+
\mu
\
x_i \, 
\Delta_2^Q
\, z_k
+
\nu
\
x_i \, y_j \, 
\Delta_3^Q
=
0
\ .
$$
\end{remark}

\begin{setup}
\label{setup: I_phi}
Given a point $Q$ as in \eqref{eq: point Q in X} and $\lambda,\mu,\nu\in\C^*$, we define the ideal in $R$
$$
I_{Q(\lambda,\mu,\nu)}
\coloneqq
\left(
\lambda
\
\Delta_1^Q
\, y_j \, z_k
+
\mu
\
x_i \, 
\Delta_2^Q
\, z_k
+
\nu
\
x_i \, y_j \,
\Delta_3^Q
\right)
+
\left(
\Delta_1^Q
\, , \, 
\Delta_2^Q
\, , \, 
\Delta_3^Q
\right)^2
$$
for the smallest indices $0\leq i,j,k \leq 1$ such that $\alpha_i,\beta_j,\gamma_k$ are all non-zero. The graded component of $I_{Q(\lambda,\mu,\nu)}$ in tri-degree $(1,1,1)$ is generated by the five independent polynomials
\begin{gather*}
\lambda
\
\Delta_1^Q
\, y_j \, z_k
+
\mu
\
x_i \, 
\Delta_2^Q
\, z_k
+
\nu
\
x_i \, y_j \,
\Delta_3^Q
\ ,
\\
\Delta_1^Q \, \Delta_2^Q \, z_k
\ ,\ 
\Delta_1^Q \, y_j \, \Delta_3^Q
\ ,\ 
x_i \, \Delta_2^Q \, \Delta_3^Q
\ ,\ 
\Delta_1^Q \, \Delta_2^Q \, \Delta_3^Q
\ .
\end{gather*}
In particular, there is always an automorphism of $R$ preserving the tri-graded structure (equivalently, an automorphism of $X$) that sends  $I_{Q(\lambda,\mu,\nu)}$ into the ideal $I_{Q'(1,1,1)}$, where $Q' = (1:0)^3$. 
\end{setup}

\vskip0.2cm

The following is a useful description of the Cohen-Macaulay curves in $X$ of the tri-degrees of interest for the later analysis.

\begin{lemma}
\label{lemma: canonical CM curves}
Let $C$ be a Cohen-Macaulay curve in $X$. Then, we have the following:
\begin{itemize}
\setlength\itemsep{0.5em}
\item {If $C$ has tri-degree $(1,1,1)$ there is an automorphism of $X$ sending it to the curve defined by one of the following ideals:
\vskip-0.2cm
\begin{minipage}[t]{0.45\textwidth}
\begin{enumerate}
\item $(x_0y_1-x_1y_0,x_0z_1-x_1z_0,y_0z_1-y_1z_0)$
\item $(x_0y_1-x_1y_0,z_1)\cap (x_1,y_0)$
\item $(x_0y_1-x_1y_0,z_1)\cap (x_1,y_1)$
\item $(x_1,y_1)\cap (x_0,z_1)\cap (y_0,z_0)$
\end{enumerate}
\end{minipage}
\begin{minipage}[t]{0.45\textwidth}
\begin{enumerate}
\setcounter{enumi}{4}
\item $(x_1,y_1)\cap (x_1,z_1)\cap (y_0,z_0)$
\item $(x_1,y_1)\cap (x_1,z_1)\cap (y_1,z_0)$
\item $(x_1,y_1)\cap (x_1,z_1)\cap (y_1,z_1)$
\end{enumerate}
\end{minipage}
}
\item {If $C$ has tri-degree either $(1,1,0)$, $(1,0,1)$, or $(0,1,1)$ there is an automorphism of $X$ sending it to the curve defined by one of the following ideals:
\vskip-0.2cm
\begin{minipage}[t]{0.45\textwidth}
\begin{enumerate}
\setcounter{enumi}{7}
\item $(x_0y_1-x_1y_0,z_1)$
\item $(x_1,z_0)\cap (y_1,z_1)$
\end{enumerate}
\end{minipage}
\begin{minipage}[t]{0.45\textwidth}
\begin{enumerate}
\setcounter{enumi}{9}
\item $(x_1,z_1)\cap (y_1,z_1)$
\end{enumerate}
\end{minipage}
}
\item {If $C$ has tri-degree either $(1,0,0)$, $(0,1,0)$, or $(0,0,1)$ it is a projective line, and it can be transformed by an automorphism of $X$ into the line of ideal $(y_1,z_1)$.}
\end{itemize}
\end{lemma}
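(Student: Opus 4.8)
The plan is to reduce the classification to an enumeration of \emph{reduced} configurations and then to normalize each configuration with the automorphism group $\Aut(X)\cong\mathfrak{S}_3\rtimes\PGL(2,\C)^3$ from §\ref{subsection: automorphisms of X}. First I would note that the tri-degree is additive over the irreducible components of $C$ counted with multiplicity, since each entry $d_i$ is the degree of the generically finite projection $\pi_i|_C$ onto the $i$-th factor. Because every entry of the tri-degrees $(1,1,1)$, $(1,1,0)$, and $(1,0,0)$ lies in $\{0,1\}$, this additivity forces every component to occur with multiplicity one, so $C$ is reduced. A reduced curve, being a pure one-dimensional reduced scheme, is automatically Cohen-Macaulay: at each point the associated primes of its local ring are the minimal primes, all of dimension one, so prime avoidance produces a nonzerodivisor in the maximal ideal. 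Hence it suffices to classify reduced curves of these tri-degrees up to $\Aut(X)$, and conversely each listed ideal visibly defines a reduced, hence Cohen-Macaulay, curve.

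Next I would identify the irreducible building blocks. An irreducible curve has nonzero tri-degree, since a curve contracted by all three projections would be a point; so the possible component tri-degrees are $(1,0,0)$ and its permutations, $(1,1,0)$ and its permutations, and $(1,1,1)$. A component of tri-degree $(1,0,0)$ projects isomorphically onto the first factor and is constant on the other two, so it is an axis-parallel line, sent by $\PGL(2,\C)^3$ to $V(y_1,z_1)$. A component of tri-degree $(1,1,0)$ has $d_3=0$, hence lies in a single fibre $\P_\C^1\times\P_\C^1\times\{c\}$, where it is an irreducible curve of bidegree $(1,1)$, i.e.\ a smooth conic (the graph of an isomorphism of $\P_\C^1$); up to $H\cong\PGL(2,\C)^3$ it becomes $V(x_0y_1-x_1y_0,z_1)$. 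Finally, a component of tri-degree $(1,1,1)$ projects with degree one onto each factor, so it is rational and equals the image of $\P_\C^1\to X$, $t\mapsto(\eta_1(t),\eta_2(t),\eta_3(t))$ with each $\eta_i\in\PGL(2,\C)$; composing with $H$ makes every $\eta_i$ the identity and yields the diagonal of item (1).

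It then remains to assemble these blocks. For tri-degree $(1,1,1)$, a single $(1,1,1)$-component gives (1); otherwise the three directions are distributed either as a $(1,1,0)$-conic together with a line transverse to its fibre, or as three lines of tri-degrees $(1,0,0)$, $(0,1,0)$, $(0,0,1)$, one per direction. I would then classify the incidences. Two axis-parallel lines in different directions meet in at most one point, and whether they meet is a closed condition on their fixed coordinates; the key combinatorial observation is that three such lines meeting pairwise are automatically \emph{concurrent}, because pairwise incidence forces all three intersection points to coincide, so no genuine ``triangle'' occurs. Hence the incidence graph on the three lines is empty, a single edge, a path, or the concurrent configuration, producing exactly (4), (5), (6), (7); the conic-plus-line case splits according to whether the point where the line pierces the conic's fibre lies on the conic, giving (2) and (3). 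The cases $(1,1,0)$ and $(1,0,0)$ are handled in the same way: a $(1,1)$-curve inside one fibre is either the smooth conic (8) or a pair of lines meeting in that fibre (10), two lines in distinct fibres give the disjoint pair (9), and a single line is the last item. In each case the remaining freedom in $\PGL(2,\C)^3$, namely triple transitivity on each factor, together with the permutation action of $\mathfrak{S}_3$, suffices to place all distinguished points in the coordinate positions appearing in the stated ideals.

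The main obstacle is the bookkeeping of the incidence normalization: one must check that in each configuration the number of points to be fixed on each $\P_\C^1$ never exceeds what triple transitivity of $\PGL(2,\C)$ allows, and that the $\mathfrak{S}_3$-action is used consistently to collapse all permutations of a given tri-degree pattern to a single representative. A secondary point to verify carefully is that each intersection of components is a reduced, transverse point, so that the scheme-theoretic union is exactly the intersection of the linear prime ideals written down; this is what guarantees that the normal forms have the displayed form and that no configuration is either missed or collapses onto another in the list.
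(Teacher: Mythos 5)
Your argument is correct and follows essentially the same route as the paper: classify the irreducible components via their degree-one projections (line, graph of an isomorphism, or diagonal-type curve), normalize each with $H\cong\PGL(2,\C)^3$ and the $\mathfrak{S}_3$-permutations, and then enumerate the incidence configurations to obtain the listed ideals. You additionally make explicit two points the paper leaves implicit — that the tri-degree bounds force $C$ to be reduced (so the enumeration by irreducible components is legitimate) and that three pairwise-meeting axis-parallel lines are automatically concurrent, which is exactly why only the four configurations (4)--(7) occur.
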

\begin{proof}
Up to an automorphism of $X$ that simply permutes the factors (see §\ref{subsection: automorphisms of X}), we can assume that $C$ has tri-degree either $(1,1,1)$, $(1,1,0)$, or $(1,0,0)$. In the latter case, the projection $\pi_1:C\xrightarrow{} \P_\C^1$ onto the first factor of $X$ is an isomorphism, so $C$ is a projective line. In particular, there is an automorphism of $X$ sending $C$ to the line $(y_1,z_1)$.
\vskip0.2cm
Now, we assume that $C$ is irreducible of tri-degree either $(1,1,1)$ or $(1,1,0)$. The projections $\pi_i:X\xrightarrow{}\P_\C^1$ onto the first two factors of $X$ have degree one, so $C$ is rational. Hence, $C$ is the image of a regular map $\varphi: \P_\C^1 \xrightarrow{} C$ given by
\begin{equation}
\label{eq: parametrization of CM curve}
    (t_0:t_1) \mapsto (u_0(t_0,t_1):u_1(t_0,t_1))\times
    (v_0(t_0,t_1):v_1(t_0,t_1))\times
    (w_0(t_0,t_1):w_1(t_0,t_1))\ ,
\end{equation}
for some linear forms $u_i,v_j,w_k\in \C[t_0,t_1]$ (if $C$ has tri-degree $(1,1,0)$, the $w_k$'s are proportional and we have $(w_0(t_0,t_1):w_1(t_0,t_1)) = (\gamma_0:\gamma_1)$ for some $(\gamma_0:\gamma_1)\in\P_\C^1$). Therefore, any two parametrizations of curves of the same tri-degree coincide up to the action of $\PGL(2,\C)^3 \leq \Aut(X)$. In particular, if $C$ has tri-degree $(1,1,1)$ (resp$.\, (1,1,0)$) there is an automorphism of $X$ sending it to the curve defined by the ideal $(1)$ (resp$.\, (8)$) in the statement. 
\vskip0.2cm
Now, assume that $C$ has exactly two irreducible components of tri-degrees $(1,1,0)$ and $(0,0,1)$. By the argument above, the component of tri-degree $(1,1,0)$ can be transformed into the curve $D\subset X$ defined by the ideal $(x_0 y_1 - x_1 y_0 , z_1)$. Moreover, the component of tri-degree $(0,0,1)$ can be parametrized as
\begin{equation*}
    (t_0:t_1) \mapsto ( \alpha_0:\alpha_1 )\times
    ( \beta_0:\beta_1 )\times
    (w_0'(t_0,t_1):w_1'(t_0,t_1))\ ,
\end{equation*}
for some $(\alpha_0:\alpha_1)\times (\beta_0:\beta_1) \in \P_\C^1\times \P_\C^1$ and linear forms $w_0',w_1'$ in $\C [t_0,t_1]$. In particular, if $(\alpha_0:\alpha_1)\times (\beta_0:\beta_1) \in \P_\C^1\times \P_\C^1$ satisfies $\alpha_0 \beta_1-\alpha_1 \beta_0 = 0$ we find an automorphism of $X$ that stabilizes $D$ and sends
$$
( \alpha_0:\alpha_1 )\times
( \beta_0:\beta_1 )\times
( w_0'(t_0,t_1):w_1'(t_0,t_1) )
\mapsto
(1:0)\times
(1:0)\times
( w_0'(t_0,t_1):w_1'(t_0,t_1) )
\ .
$$
On the other hand, if $\alpha_0 \beta_1-\alpha_1 \beta_0 \not= 0$ then we find an automorphism sending
$$
( \alpha_0:\alpha_1 )\times
( \beta_0:\beta_1 )\times
( w_0'(t_0,t_1):w_1'(t_0,t_1) )
\mapsto
(1:0)\times
(0:1)\times
( w_0'(t_0,t_1):w_1'(t_0,t_1) )
$$
that stabilizes $D$. The images of the two last parametrizations are the projective lines in $X$ respectively defined by $(x_1,y_1)$ and $(x_1,y_0)$. Therefore, $C$ can be transformed into the curve defined by either the ideal $(3)$ or $(2)$ in the statement by means of an automorphism of $X$. Proceeding similarly with curves of tri-degree $(1,1,1)$ with three irreducible components, one derives the curves from $(4)$ to $(7)$. On the other hand, if $C$ is reducible of tri-degree $(1,1,0)$ it must have exactly two irreducible components of tri-degrees $(1,0,0)$ and $(0,1,0)$. Depending on whether these components intersect, $C$ can be transformed into the curve defined by either $(9)$ or $(10)$.
\end{proof}

Recall from §\ref{subsection: curves in X} that if $\dim Z_\phi = 1$ then $I_\phi$ stands for the intersection of all the associated primes to $B_\phi$ in $R$ of codimension exactly two. On the other hand,  if $\dim Z_\phi = 0$ we define $I_\phi \coloneqq I_{Q(\lambda,\mu,\nu)}$ where $Q\in X$ and $\lambda,\mu,\nu\in\C^*$ are as in the statement of Lemma \ref{lemma: condition on 0dim base locus}. Additionally, in all the cases we define $W_\phi \coloneqq (I_\phi)_{(1,1,1)}$. In particular, if $W = W_\phi$ then \eqref{eq: vector space W} is satisfied. Moreover, vectors in $W_\phi$ are identified to surfaces in the linear system that the $f_i$'s define in $X$.

\vskip0.2cm

The following result provides useful information about   Diagram \eqref{eq: commutative diagram}, according to the type of $\phi$. 
Recall that a subvariety $V\subset \P_\C^N$ has \textit{minimal degree} if it is non-degenerate and $\deg V = 1 + \text{codim }V$.

\begin{proposition}
\label{proposition: commutative diagram}
Assume that $\phi$ is birational, and set $W = W_\phi$. We have the following:
\begin{itemize}
    \item If $\phi$ has type $(1,1,1)$, then $N = 3$ and $Y_{ W } \cong \P_\C^3$.
    \item If $\phi$ has type $(1,1,2)$, then $N = 4$ and $\deg Y_{ W } = 2$.
    \item If $\phi$ has type $(1,2,2)$, then $N = 5$ and $\deg Y_{ W } = 3$.
    \item If $\phi$ has type $(2,2,2)$, then $N = 4$ and $\deg Y_{ W } = 2$.
\end{itemize}
In all the cases, $Y_W$ has minimal degree and $\zeta_W$ is birational for any choice of basis in $W^\vee$. 
Moreover, if $\dim Z_\phi = 1$ the curve $C_\phi$ is connected.
\end{proposition}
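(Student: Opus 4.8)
The plan is to reduce every assertion to the canonical models supplied by Lemma \ref{lemma: canonical CM curves} (when $\dim Z_\phi = 1$) and by Setup \ref{setup: I_phi} (for type $(2,2,2)$). This reduction is harmless because any automorphism of $X$ acts on $R$ preserving each tri-graded component, hence fixes $\dim_\C W_\phi$ and carries $\zeta_{W_\phi}$ to a projectively equivalent map, leaving $N$, $\deg Y_W$, and the isomorphism type of $Y_W$ unchanged. The birationality of $\zeta_W$ then comes almost for free from the factorization $\phi = \pi_L\circ\zeta_W$: since $\phi$ dominates the three-dimensional $\P_\C^3$, the image $Y_W$ is three-dimensional, so $\zeta_W$ and $\pi_L|_{Y_W}$ are dominant maps between irreducible threefolds, hence generically finite, and $\deg\phi = \deg\zeta_W\cdot\deg(\pi_L|_{Y_W})$; as $\deg\phi = 1$, both factors have degree one. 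A different basis of $W^\vee$ only post-composes $\zeta_W$ with a linear automorphism of $\P_\C^N$, so this conclusion is basis-independent.

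To compute $N = \dim_\C W_\phi - 1$ I would use the exact sequence $0\to W_\phi\to R_{(1,1,1)}\to (R/I_\phi)_{(1,1,1)}\to 0$ with $\dim_\C R_{(1,1,1)} = 8$. When $\dim Z_\phi = 1$ one has $(R/I_\phi)_{(1,1,1)} = H^0(C_\phi,\Oc_{C_\phi}(1,1,1))$ (the restriction $R_{(1,1,1)}\to H^0(\Oc_{C_\phi}(1,1,1))$ is surjective for these curves, and $h^1$ vanishes since $\Oc_{C_\phi}(1,1,1)$ is positive on every rational component), so $\dim_\C(R/I_\phi)_{(1,1,1)} = \deg\Oc_{C_\phi}(1,1,1) + \chi(\Oc_{C_\phi}) = (d_1+d_2+d_3)+1$ once $\chi(\Oc_{C_\phi}) = 1$ is established. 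This yields $\dim_\C W_\phi = 7-(d_1+d_2+d_3)$, i.e. $N = 6-(d_1+d_2+d_3)\in\{3,4,5\}$ in tri-degrees $(1,1,1),(1,1,0),(1,0,0)$, matching the three one-dimensional types. For type $(2,2,2)$ the value $N = 4$ is read off directly from the five independent generators of $I_\phi$ in tri-degree $(1,1,1)$ listed in Setup \ref{setup: I_phi}.

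For the degree I would resolve $\zeta_W$ by the blow-up $\pi_W\colon\blowup_W X\to X$ of its base scheme and compute $\deg Y_W = (\pi_W^*\Oc_X(1,1,1)-E)^3$. On $X$ one has $\Oc_X(1,1,1)^3 = 6$ and $K_X = \Oc_X(-2,-2,-2)$, and a direct self-intersection count gives, when $\dim Z_\phi = 1$ and $\chi(\Oc_{C_\phi}) = 1$, the value $\deg Y_W = 4-(d_1+d_2+d_3)\in\{1,2,3\}$; for type $(2,2,2)$ one checks on the five forms of Setup \ref{setup: I_phi} that $Y_W$ is the quadric $\{T_1T_2+T_1T_3+T_2T_3-T_0T_4 = 0\}$, of degree $2$. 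In every case $Y_W$ is non-degenerate (its coordinates form a basis of $W$) and $\deg Y_W = 1+\mathrm{codim}\,Y_W$, so $Y_W$ has minimal degree. I expect carrying out the self-intersection cleanly to be the main obstacle when $C_\phi$ is reducible or singular (curves $(3),(6),(7),(10)$ of Lemma \ref{lemma: canonical CM curves}) and for the zero-dimensional base scheme of type $(2,2,2)$; the robust alternative is to verify $\deg Y_W$ on each canonical representative directly, where the defining ideal of $Y_W$ is explicit.

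Finally, for connectedness I would argue by liaison. For a general line $\ell\subset\P_\C^3$ the ideal $(\sum_i\alpha_if_i,\sum_i\beta_if_i)$ cuts out the complete intersection $Z = \phi^{-1}(\ell)\cup C_\phi$ of two divisors of tri-degree $(1,1,1)$, and adjunction gives $\omega_Z = (\omega_X\otimes\Oc_X(2,2,2))|_Z = \Oc_Z$, whence $\chi(\Oc_Z) = 0$. The linkage of $\phi^{-1}(\ell)$ and $C_\phi$ inside $Z$ then yields $\chi(\Oc_{C_\phi}) = \chi(\Oc_{\phi^{-1}(\ell)})$. Because the type is $(1,1,1)$, $(1,1,2)$, or $(1,2,2)$, at least one component of $\phi^{-1}$ has degree one, so $\phi^{-1}|_\ell\colon\P_\C^1\to X$ is a closed embedding (it is injective and unramified through that component); hence $\phi^{-1}(\ell)$ is smooth rational with $\chi(\Oc_{\phi^{-1}(\ell)}) = 1$, forcing $\chi(\Oc_{C_\phi}) = 1$ (and thereby supplying the input used above). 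Since every curve in the list of Lemma \ref{lemma: canonical CM curves} is a disjoint union of trees of rational curves, one has $h^1(\Oc_{C_\phi}) = 0$, so $h^0(\Oc_{C_\phi}) = \chi(\Oc_{C_\phi}) = 1$ and $C_\phi$ is connected; this simultaneously excludes the disconnected curves of the list from occurring as $C_\phi$.
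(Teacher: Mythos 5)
Your proposal reaches all the stated conclusions but by a genuinely different route in two places. The paper's proof is entirely case-by-case on the canonical models of Lemma \ref{lemma: canonical CM curves} and Setup \ref{setup: I_phi}: it computes $\dim_\C W$ directly for each listed ideal, obtains connectedness by observing that the disconnected curves of tri-degree $(1,1,1)$ give $\dim_\C W\leq 3<4$ (impossible, since the four independent $f_l$'s lie in $W$) and that the pair of skew lines of tri-degree $(1,1,0)$ would force $Z_\phi$ to have tri-degree $(1,1,1)$, and it verifies $\deg Y_W$ and the birationality of $\zeta_W$ by explicit computation on each representative. You instead (a) deduce birationality of $\zeta_W$ from multiplicativity of degrees in the factorization $\phi=\pi_L\circ\zeta_W$, which is clean and is essentially the remark the paper makes right after Diagram \eqref{eq: commutative diagram}; (b) compute $N$ by Riemann--Roch on $C_\phi$ using $\chi(\Oc_{C_\phi})=1$; and (c) obtain $\chi(\Oc_{C_\phi})=1$, hence connectedness, by linking $C_\phi$ to $\phi^{-1}(\ell)$ inside the complete intersection of two general members of the linear system, using $\omega_Z\cong\Oc_Z$ and the fact that $\phi^{-1}|_\ell$ is a closed immersion whenever some $r_i=1$. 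The liaison argument is more conceptual and explains \emph{why} connectedness holds, whereas the paper's exclusion of the disconnected models is more elementary and yields $\dim_\C W$ at the same time; for $\deg Y_W$ you ultimately fall back on the same direct verification on canonical representatives that the paper performs. Two of your steps deserve more care than you give them: the identification of the liaison residual $(I_Z:I_{\phi^{-1}(\ell)})$ with $C_\phi$ (the complete intersection $Z$ is unmixed, but one must check that two general members of $W_\phi$ cut out exactly the codimension-two primary components of $B_\phi$ away from $\phi^{-1}(\ell)$ --- this is not automatic at, say, the triple point of curve $(7)$ of Lemma \ref{lemma: canonical CM curves}, where the local ideal needs three generators), and the surjectivity of $R_{(1,1,1)}\to H^0(\Oc_{C_\phi}(1,1,1))$ together with the vanishing of $h^1$; both claims are true and checkable on the canonical list, so neither is a fatal gap, but as written they are assertions rather than proofs.
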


\begin{proof}
We prove each statement separately:
\begin{itemize}
\setlength\itemsep{0.3em}
\item If $\phi$ has type $(1,1,1)$, the base locus $Z_\phi$ has tri-degree $(1,1,1)$. Therefore, up to an automorphism of $X$ the curve $C_\phi$ is one of the seven listed in Lemma \ref{lemma: canonical CM curves}. The ideals $(1),(3),(6)$, and $(7)$ determine connected curves, and satisfy $\dim_\C W = 4$. On the other hand, the ideals $(2),(4),(5)$ yield $\dim_\C W = 3$. As the entries $f_l$'s of $\phi$ are independent and they belong to $W$, we must have $\dim_\C W \geq 4$. Hence, $\dim_\C W = 4$ and $C_\phi$ is a connected curve.
\item If $\phi$ has type $(1,1,2)$, the base locus $Z_\phi$ has tri-degree $(1,1,0)$. Therefore, up to an automorphism of $X$ the curve $C_\phi$ is one of the three listed in Lemma \ref{lemma: canonical CM curves}. The ideals $(8)$ and $(10)$ determine connected curves and satisfy $\dim_\C W = 5$. On the other hand, the ideal $(9)$ defines two skew lines. Moreover, the ideal generated by the polynomials of tri-degree $(1,1,1)$ in $(9)$ defines a curve of tri-degree $(1,1,1)$ in $X$. From \eqref{eq: vector space W}, it follows that if $C_\phi$ is equivalent to $(9)$ then $Z_\phi$ has tri-degree $(1,1,1)$. 
Therefore, we must have $\dim_\C W = 5$ and $C_\phi$ is again a connected curve. The degree of $Y_W$ follows from a direct computation (performed with the help of \textsc{Macaulay2}) with the ideals $(8)$ and $(10)$.
\item If $\phi$ has type $(1,2,2)$, the base locus $Z_\phi$ has tri-degree $(1,0,0)$. By Lemma \ref{lemma: canonical CM curves} the curve $C_\phi$ is a projective line, so it is connected and $\dim_\C W = 6$. Similarly, the degree of $Y_W$ follows from a direct computation on the ideal $(y_0,z_0)$.
\item If $\phi$ has type $(2,2,2)$, it is immediate from Setup \ref{setup: I_phi}
that $\dim_\C W=5$. Similarly, it follows from a direct computation that $\deg Y_W = 2$.
\end{itemize}
\vskip0.2cm
On the other hand, two distinct basis in $W^\vee$ yield rational maps $\zeta_W$ that coincide up to an automorphism of $\P_\C^N$. The birationality of $\zeta_W$ then follows from the explicit construction of this rational map when $W$ is the component in tri-degree $(1,1,1)$ of the ideals listed above.
\end{proof}

\section{The algebraic set of tri-linear birational maps}
\label{section: the algebraic set of tri-linear birational maps}

We now describe the algebraic set $\Bir_{(1,1,1)}$ of tri-linear birational maps after quotient by the equivalence relation given by composition with an automorphism of $\P_\C^3$. Additionally, we introduce the group action of $\Aut(X)$ on $\Bir_{(1,1,1)}$. 

\subsection{$\Bir_{(1,1,1)}$ and $\Birr_{(r_1,r_2,r_3)}$} 
\label{subsection: Bir(1,1,1) and Bir(r1,r2,r3)}
We write $\Rat_{(1,1,1)}$ for the set of ordered tuples of four tri-linear polynomials in the variables of $X$ without a common factor, up to non-zero scalar. The elements in $\Rat_{(1,1,1)}$ are identified with tri-linear rational maps. 
In this set, we introduce the equivalence relation $\sim$ given by setting $\phi \sim \phi'$ if $W_{\phi} = W_{\phi'}$. Equivalently, $\phi\sim \phi'$ if there is $\rho\in\Aut(\P_\C^3)$  such that $\phi' = \rho\circ\phi$. 
\vskip0.2cm
We define $\Bir_{(1,1,1)}$ as the quotient by $\sim$ of the subset of $\Rat_{(1,1,1)}$ given by the  tuples associated to birational maps. 
In particular, if $V_{(1,1,1)}$ is the $\C$-vector space of tri-linear polynomials in the variables of $X$, 
there is an injective map of sets from $\Bir_{(1,1,1)}$ to the Grassmannian $\mathbb{G}(4,V_{(1,1,1)})$ given by $[\phi]\mapsto W_\phi$. Therefore, we can write $W_\phi$ instead of $[\phi]$.
\vskip0.2cm
Clearly, two birational maps $\phi$ and $\phi'$ in the same class of $\Bir_{(1,1,1)}$ have the same type.  Thus, given $1\leq r_1,r_2,r_3 \leq 2$ it makes sense to define the sets
$$
\Birr_{(r_1,r_2,r_3)} \coloneqq \{ W_\phi\in \Bir_{(1,1,1)}:\, \phi \text{ has type }(r_1,r_2,r_3)
\}\ .
$$
From \textsc{Table} \ref{table: possible types of inverse}, these subsets form a partition of $\Bir_{(1,1,1)}$. 

\subsection{The group action of $\Aut(X)$}

The map
\begin{eqnarray}
\label{eq: group action of automorphisms of X in Bir_(1,1,1)}
	\Aut(X)\times \Bir_{(1,1,1)} & \xrightarrow{} & \Bir_{(1,1,1)} \\ \nonumber
    \xi \times W_\phi & \mapsto & W_{\phi\circ \xi}
\end{eqnarray}
is a well-defined group action of $\Aut(X)$ on $\Bir_{(1,1,1)}$, that we call \textit{right-action}. 
Namely, given birational maps $\phi$ and $\phi'$, the vector spaces $W_\phi$ and $W_{\phi'}$ belong to the same orbit of the right-action if and only if there are $\rho\in \Aut(\P_\C^3)$ and $\xi\in \Aut(X)$ such that $\phi' = \rho \circ \phi \circ \xi$. With a small abuse in the notation, we also say that $\phi$ and $\phi'$ belong to the same orbit. In particular, the base loci of two birational maps in the same orbit coincide up to automorphism of $X$.

\subsection{The algebraic set $\Bir_{(1,1,1)}$}

We write $\mathscr{S}(Y_W)$ for the variety of $(N - 4)$-dimensional subspaces secant to $Y_W$. Now, we prove the main result of this section.
\begin{theorem}
\label{theorem: algebraic structure of Bir(1,1,1)}
Let $V_{(1,1,1)}$ be the $\C$-vector space of tri-linear polynomials in the variables of $X$. 
Then, the set $\Bir_{(1,1,1)}$ has the structure of a locally closed algebraic subset of the Grassmannian $\mathbb{G}(4,V_{(1,1,1)})$. Moreover, its irreducible components are the following:
\begin{itemize}
    \item  $\Birr_{(1,1,1)}$, of dimension 6.
    \item $\Birr_{(1,1,2)}$,  $\Birr_{(1,2,1)}$, and $\Birr_{(2,1,1)}$, all of dimension $7$.
    \item $\Birr_{(1,2,2)}$, $\Birr_{(2,1,2)}$, and $\Birr_{(2,2,1)}$, all of dimension $8$.
    \item $\Birr_{(2,2,2)}$, of dimension $8$.
\end{itemize}
\end{theorem}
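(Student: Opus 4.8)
The plan is to embed $\Bir_{(1,1,1)}$ into the Grassmannian through the span of the entries, $[\phi]\mapsto(B_\phi)_{(1,1,1)}$; this is well defined and injective because post-composition with an element of $\Aut(\P_\C^3)$ only changes a basis of this $4$-dimensional subspace. \textsc{Table} \ref{table: possible types of inverse} then partitions $\Bir_{(1,1,1)}$ into the type strata $\Birr_{(r_1,r_2,r_3)}$, and I would study each stratum through the factorization $\phi=\pi_L\circ\zeta_W$ of Diagram \eqref{eq: commutative diagram} with $W=W_\phi$. For the local structure I would use that birationality of a dominant tri-linear map is equivalent to the rank-one conditions of the Jacobian Dual Criterion (§\ref{subsection: jacobian dual criterion}) on the modules $J_{(1,0,0;*)}$, $J_{(0,1,0;*)}$, $J_{(0,0,1;*)}$; since the ranks of these modules are semicontinuous in families, birationality defines a locally closed condition inside the open locus of dominant maps without a common factor.

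The engine for irreducibility and dimension is Proposition \ref{proposition: commutative diagram} together with Lemma \ref{lemma: canonical CM curves}. For each type the space $W_\phi=(I_\phi)_{(1,1,1)}$ is recovered from the base-locus datum: from the connected Cohen--Macaulay curve $C_\phi$ when $\dim Z_\phi=1$, and from the point $Q$ together with $(\lambda:\mu:\nu)$ of Lemma \ref{lemma: condition on 0dim base locus} and Setup \ref{setup: I_phi} when $\dim Z_\phi=0$. Since $Y_W$ has minimal degree and $\zeta_W$ is always birational, $\phi$ is birational precisely when the center $L\in\mathscr{S}(Y_W)$ is such that $\pi_L|_{Y_W}$ has degree one, a nonempty open condition on $L$. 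Each stratum is therefore the image of an explicit morphism from the irreducible incidence variety parametrizing pairs (base-locus datum, admissible center $L$), and its dimension is $\dim\{\text{datum}\}+\dim\mathscr{S}(Y_W)$.

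I would then carry out the four counts. For type $(1,1,1)$ one has $N=3$, so $\pi_L$ is an automorphism absorbed by the quotient, and $\Birr_{(1,1,1)}$ is parametrized by connected $(1,1,1)$-curves, whose generic member is smooth and constitutes a single $\PGL(2,\C)^3$-orbit of dimension $9-3=6$. For type $(1,1,2)$ the datum $C$ is a $(1,1)$-curve in $\P_\C^1\times\P_\C^1$ times a point, so $\dim\{C\}=4$, while $L$ ranges over the quadric threefold $Y_W\subset\P_\C^4$, giving $4+3=7$. For type $(1,2,2)$ the datum is a line of tri-degree $(1,0,0)$, so $\dim\{C\}=2$, while $L$ is a secant line of the cubic threefold $Y_W\subset\P_\C^5$ with $\dim\mathscr{S}(Y_W)=6$, giving $2+6=8$. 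For type $(2,2,2)$ the datum $(Q,(\lambda:\mu:\nu))$ has dimension $3+2=5$ and $L$ ranges over the quadric $Y_W\subset\P_\C^4$, giving $5+3=8$. Each parametrizing variety is an irreducible fibration over an irreducible base, so each stratum is irreducible of the asserted dimension; the $\mathfrak{S}_3\subset\Aut(X)$ of §\ref{subsection: automorphisms of X} permutes the factors of $X$ and produces the three copies occurring in the mixed types.

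It remains to see that these eight disjoint irreducible locally closed sets are exactly the irreducible components, and this is the step I expect to be the main obstacle. It is equivalent to showing that the type is locally constant on $\Bir_{(1,1,1)}$: then each stratum is relatively closed and, being also disjoint and covering, clopen in $\Bir_{(1,1,1)}$, from which local closedness of the whole set follows as well. The subtlety is that, a priori, in a flat family the base locus can grow, so by the pairing $(r_1,r_2,r_3)=(2,2,2)-\text{tri-deg}(Z_\phi)$ of \eqref{eq: equality on type of inverse} the type could drop under specialization. The plan is to exclude any type-lowering degeneration that remains birational by means of the factorization: in such a limit the base locus would acquire an extra curve of prescribed tri-degree, forcing $W_0=(I_{\phi_0})_{(1,1,1)}$ and the threefold $Y_{W_0}$ into a different case of Proposition \ref{proposition: commutative diagram}; the projection datum inherited from the family then cannot realize a degree-one projection of $Y_{W_0}$ onto $\P_\C^3$, so $\pi_{L_0}|_{Y_{W_0}}$ fails to be birational and $\phi_0$ leaves $\Bir_{(1,1,1)}$, a contradiction. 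This yields local constancy of the type, and hence the eight strata as the irreducible components.
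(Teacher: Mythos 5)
Your embedding into the Grassmannian and your treatment of irreducibility and dimensions of the strata follow essentially the paper's route (incidence varieties over the space of base-locus data, with fibres governed by $\mathscr{S}(Y_W)$ via Proposition \ref{proposition: commutative diagram}), and your dimension counts agree with the paper's. Two points diverge, one of them fatally.

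First, the locally closed structure. The paper does not argue via semicontinuity of the ranks of $J_{(1,0,0;*)}$, $J_{(0,1,0;*)}$, $J_{(0,0,1;*)}$: it composes with the explicit birational map $\Phi$ of \eqref{eq: birational map; Bir is algebraic set} to identify tri-linear maps with cubic endomorphisms of $\P_\C^3$ whose entries lie in a fixed subspace $V'\subset V_3$, and then intersects the known locally closed set of \cite[Proposition B]{transformationsquadratiques} with $\mathbb{G}(4,V')$. Your semicontinuity claim is not justified as stated: the Rees ideal is a saturation and its graded pieces do not vary semicontinuously in families in any obvious way, so this step needs either a genuine argument or the paper's reduction.

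Second, and this is the genuine gap: your final step rests on the claim that the type is locally constant on $\Bir_{(1,1,1)}$, so that the eight strata are clopen. This is false, and the paper's own classification in §\ref{section: classification} exhibits the counterexamples. Fix $W=(I_{Q'(1,1,1)})_{(1,1,1)}$ and let $L$ vary in $Y_W$; by the proof of Theorem \ref{theorem: classification (2,2,2)}, the map $\pi_L\circ\zeta_W$ is birational of type $(2,2,2)$ for general $L$, but lies in the orbit of $\rho_2^{(1,1,2)}$ for $L\in (r_{12}\cup r_{13}\cup r_{23})\setminus O$ and of $\rho_4^{(1,1,1)}$ for $L=O$ --- all of these limits remain birational, so the closure of $\Birr_{(2,2,2)}$ inside $\Bir_{(1,1,1)}$ meets $\Birr_{(1,1,2)}$ and $\Birr_{(1,1,1)}$ nontrivially. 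Your proposed mechanism (``the limit projection cannot be degree one, so $\phi_0$ leaves $\Bir_{(1,1,1)}$'') simply does not occur: the base locus grows, the type drops, and birationality survives. Consequently the strata are not closed, and the statement that the eight strata are the irreducible components must instead be argued by showing that no stratum closure is contained in another --- e.g.\ that the dense orbit of $\rho_1^{(1,1,1)}$ in $\Birr_{(1,1,1)}$ is not a degeneration of any higher-dimensional stratum, which is what the degeneration analysis of §\ref{section: classification} delivers. As written, your argument would ``prove'' that no degenerations between types exist, contradicting \textsc{Diagram} \ref{diagram: degenerations}.
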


\begin{proof}
The birational map
\begin{eqnarray}
\label{eq: birational map; Bir is algebraic set}
	\Phi: X & \dasharrow & \P_\C^3 \\ \nonumber
	(x_0:x_1)\times(y_0:y_1)\times(z_0:z_1) & \mapsto & (x_0y_0z_0:x_1y_0z_0:x_0y_1z_0:x_0y_0z_1)
\end{eqnarray}
is an isomorphism between the affine subset  $U\subset X$ determined by $x_0\not= 0, y_0\not= 0,z_0\not= 0$ and the affine subset of $\P_\C^3 = \Proj(\C[t_0,t_1,t_2,t_3])$ given by $t_0\not= 0$. Write $V_3$ for the $\C$-vector space of cubic forms in $t_0$,$t_1$,$t_2$, and $t_3$. We have the injective linear map $\chi: V_{(1,1,1)} \xrightarrow{} V_3$ given by
\begin{gather*}
x_0y_0z_0 \mapsto t_0^3
\ ,\ 
x_1y_0z_0 \mapsto t_0^2t_1
\ ,\ 
x_0y_1z_0 \mapsto t_0^2t_2
\ ,\ 
x_0y_0z_1 \mapsto t_0^2t_3
\ ,\\ 
x_1y_1z_0 \mapsto t_0t_1t_2
\ ,\ 
x_1y_0z_1 \mapsto t_0t_1t_3
\ ,\ 
x_0y_1z_1 \mapsto t_0t_2t_3
\ ,\ 
x_1y_1z_1 \mapsto t_1t_2t_3
\ .
\end{gather*}
In particular, the rational endomorphism $\psi: \P_\C^3 \dasharrow \P_\C^3$ that sends
$$
(t_0:t_1:t_2:t_3)\mapsto (\chi(f_0):\chi(f_1):\chi(f_2):\chi(f_3))
$$  
coincides with $\phi\circ\Phi^{-1}$ on the open set $\Phi(U)$. Therefore, the association 
$
\phi\mapsto \phi\circ \Phi^{-1}
$
is an identification between tri-linear rational maps and rational endomorphisms of $\P_\C^3$ whose entries belong to the vector subspace
$$
V' = 
\langle t_0^3 \, ,\, t_0^2 t_1  \, ,\, t_0^2 t_2  \, ,\, t_0^2 t_3  \, ,\, t_0 t_1 t_2  \, ,\, t_0 t_1 t_3  \, ,\, t_0 t_2 t_3  \, ,\, t_1 t_2 t_3 \rangle
\subset
V_3
\ .
$$
By \cite[Proposition B]{transformationsquadratiques}, the classes (up to composition with an automorphism of $\P_\C^3$) of birational automorphisms of $\P_\C^3$ with cubic entries form a locally closed algebraic set $Y'\subset\mathbb{G}(4,V_3)$. On the other hand, the Grassmannian $\mathbb{G}(4,V')$ is a subvariety of $\mathbb{G}(4,V_3)$. Hence, the intersection
$$
\Bir_{(1,1,1)} = Y' \cap \mathbb{G}(4,V')
$$
is also a locally closed algebraic subset of the Grassmannian $\mathbb{G}(4,V')\cong\mathbb{G}(4,V_{(1,1,1)})$.
\vskip0.4cm
Now, we prove the statements about the irreducible components of $\Bir_{(1,1,1)}$. Recall from Proposition \ref{proposition: commutative diagram} that if $\dim Z_\phi = 1$ the Cohen-Macaulay curve $C_\phi$ is connected. 
Let $\Cc_{(1,1,1)}$ be the variety of connected Cohen-Macaulay curves of tri-degree $(1,1,1)$, which is irreducible of dimension 6 \cite[Proposition 4.9]{ballico}.
Additionally, consider the algebraic set
$$
\mathscr{V}_{(1,1,1)} = \{ W_\phi \times C \in \Bir_{(1,1,1)}\times \Cc_{(1,1,1)} : W_\phi \subset {(I_C)}_{(1,1,1)} \}\ ,
$$
together with the canonical projections $\Pi_1: \mathscr{V}_{(1,1,1)} \xrightarrow{} \Bir_{(1,1,1)}$ and $\Pi_2: \mathscr{V}_{(1,1,1)} \xrightarrow{} \Cc_{(1,1,1)}$. 
Given a curve $C\in \Cc_{(1,1,1)}$ we have $\Pi_2^{-1}(C) = W \times C$, where $W = {(I_C)}_{(1,1,1)}$ as $\dim_\C W = 4$ by  Proposition \ref{proposition: commutative diagram}. Hence, $\mathscr{V}_{(1,1,1)}$ is irreducible of dimension $6$. On the other hand, given $W_\phi\in \Bir_{(1,1,1)}$ we have $\Pi_1^{-1}(W_\phi) = W_\phi \times C_\phi$ if $\phi$ has type $(1,1,1)$ and $\Pi_1^{-1}(W_\phi) = \varnothing$ otherwise. Therefore, it follows that $\Birr_{(1,1,1)} \cong \Cc_{(1,1,1)}$.
\vskip0.2cm
The study of the irreducible components $\Birr_{(1,1,2)}$, $\Birr_{(1,2,1)}$, and $\Birr_{(2,1,1)}$ is identical, as the associated birational maps differ by a permutation of the factors of $X$. Therefore, we can focus on $\Birr_{(1,1,2)}$. By Lemma \ref{lemma: canonical CM curves}, the ideal of a connected Cohen-Macaulay curve of tri-degree $(1,1,0)$ is of the form
$$
(\alpha_0 \, x_0 y_0 + \alpha_1 \, x_1 y_0 + \alpha_2 \, x_0 y_1 + \alpha_3 \, x_1 y_1,\gamma_0 z_1 - \gamma_1 z_0)\ ,
$$
for some $(\alpha_0 : \alpha_1 : \alpha_2 : \alpha_3)\times (\gamma_0 : \gamma_1 ) \in \P_\C^3 \times \P_\C^1$, where the first generator might be reducible. Thus, we can identify the variety $\Cc_{(1,1,0)}$ of such curves with $\P_\C^3 \times \P_\C^1$. Consider now the algebraic set
$$
\mathscr{V}_{(1,1,2)} = \{ W_\phi \times C \in \Bir_{(1,1,1)}\times \Cc_{(1,1,0)} : W_\phi \subset {(I_C)}_{(1,1,1)} \}\ ,
$$
together with the projections as before. Given a curve $C \in \Cc_{(1,1,0)}$, by \cite[Proposition A]{transformationsquadratiques} the fiber $\Pi_2^{-1}(C)$ is isomorphic to an open subset of $\mathscr{S}(Y_W)$, where $W = {(I_C)}_{(1,1,1)}$. Hence, $\mathscr{V}_{(1,1,2)}$ is irreducible of dimension $\dim \Cc_{(1,1,0)} + \dim \mathscr{S}(Y_W) = 4 + 3 = 7$. On the other hand, we have the identity of sets
$$
\Birr_{(1,1,2)} = \Pi_1 ( \mathscr{V}_{(1,1,2)} ) \, \backslash \, \Birr_{(1,1,1)}\ ,
$$
so given a general $W_\phi\in \Pi_1 ( \mathscr{V}_{(1,1,2)} )$ the base locus $Z_\phi$ has tri-degree $(1,1,0)$. Thus, we have $\Pi_1^{-1}(W_\phi) = (W_\phi,C_\phi)$ and $\Pi_1$ is  birational, 
implying that $\Birr_{(1,1,2)}$ is irreducible of dimension 7. Similarly, one proves that $\Birr_{(1,2,2)}$, $\Birr_{(2,1,2)}$, and $\Birr_{(2,2,1)}$ are all irreducible of dimension 8. 
\vskip0.2cm
For the irreducible component $\Birr_{(2,2,2)}$, we consider the algebraic set
$$
\mathscr{V}_{(2,2,2)} = \{ W_\phi \times Q \times (\lambda,\mu) \in \Bir_{(1,1,1)}\times X \times (\C^*)^2 : W_\phi \subset {(I_{Q(\lambda,\mu,1)})}_{(1,1,1)} \}\ ,
$$
with the projections $\Pi_1 : \mathscr{V}_{(2,2,2)} \xrightarrow{} \Bir_{(1,1,1)}$ and $\Pi_2 : \mathscr{V}_{(2,2,2)} \xrightarrow{} X\times (\C^*)^2$. By \cite[Proposition A]{transformationsquadratiques}, given $Q \times (\lambda,\mu) \in X\times (\C^*)^2$ the fiber $\Pi_2^{-1}( Q \times (\lambda,\mu) )$ is isomorphic to an open subset of $\mathscr{S}(Y_W)$, where $W = {(I_{Q(\mu,\nu,1)})}_{(1,1,1)}$.  
Hence, $\mathscr{V}_{(2,2,2)}$ is irreducible of dimension 
$$
\dim X + \dim (\C^*)^2 + \dim \mathscr{S}(Y_W) = 3 + 2 + 3 = 8 \ .
$$ 
On the other hand, given a general $W_\phi\in \Pi_1( \mathscr{V}_{(2,2,2)} )$ we have $\Pi_1^{-1}(W_\phi) = W_\phi \times Q \times (\lambda,\mu)$ for the unique singular point $Q\in Z_\phi$ and some $(\lambda,\mu)\in (\C^*)^2$. In particular, $\Pi_1$ is birational. Moreover, as a general point in $\Pi_1(  \mathscr{V}_{(2,2,2)} )$ belongs to $\Birr_{(2,2,2)}$ the latter is irreducible of dimension 8.
\end{proof}

\section{Geometric classification of tri-linear birational maps} 
\label{section: classification}

In this section, we study the right-action on $\Bir_{(1,1,1)}$ and give the complete list of its orbits. More specifically we find exactly 19 orbits, and each of these determines an isomorphism class of the possible base loci of a tri-linear birational map.
\vskip0.2cm
Recall that a point $Q$ in the base locus $Z_\phi$ is a \textit{point of contact} to a surface $S\subset X$ smooth at $Q$ if $B_\phi \subset I(S) + I(Q)^2$. Similarly, $Q$ is a \textit{point of tangency} to a curve $C\subset X$ smooth at $Q$ if $B_\phi \subset I(C) + I(Q)^2$. On the other hand, when we refer to ``a surface in $W_\phi$'' we mean a surface in $X$ whose equation belongs to the linear system $W_\phi$ spanned by the entries of $\phi$.

\subsection{Orbits of birational maps of type $(1,1,1)$}
\label{subsection: (1,1,1) orbits}
The classification of the orbits in $\Birr_{(1,1,1)}$ is the most straightforward, as from Proposition \ref{proposition: syzygies (1,1,1)} (that we postpone until §\ref{section: syzygy-based characterization}) the base locus is a Cohen-Macaulay curve.  Namely, we have the following.
\begin{theorem}
\label{theorem: classification (1,1,1)}
Let $\phi$ be birational of type $(1,1,1)$. Then, $\phi$ belongs to the orbit of one of the following birational maps:
\begin{itemize}
    \item $\rho_1^{(1,1,1)} \equiv (x_1y_0z_1-x_0y_1z_1 : x_1y_1z_0-x_0y_1z_1: x_0y_1z_0-x_0y_0z_1: x_1y_0z_0-x_0y_0z_1)$
    \vskip0.0cm
    \noindent The entries of $\rho_1^{(1,1,1)}$ are given by the $3\times 3$ minors of the matrix
    $$
    \begin{pmatrix}
    0 & -y_1 & -z_1 \\
    -x_1 & 0 & z_1 \\
    x_0 & y_0 & 0 \\
    -x_0 & 0 & z_0 
    \end{pmatrix}\ .
    $$
    A surface in $W_{\rho_1}$ contains an irreducible curve of tri-degree $(1,1,1)$. The base ideal is 
    $$(x_0y_1-x_1y_0, x_0z_1-x_1z_0, y_0z_1-y_1z_0)\ .
    $$
    \item $\rho_2^{(1,1,1)} \equiv (x_1 y_1 z_1 : x_0 y_1 z_1 : x_1 y_0 z_1 : x_1 y_0 z_0 - x_0 y_1 z_0)$
    \vskip0.0cm
    \noindent The entries of $\rho_2^{(1,1,1)}$ are given by the $3\times 3$ minors of the matrix
    $$
    \begin{pmatrix}
    0 & 0 & -z_1 \\
    0 & -y_1 & z_0 \\
    -x_1 & 0 & -z_0 \\
    x_0 & y_0 & 0 
    \end{pmatrix}\ ,
    $$
    A surface in $W_{\rho_2}$ contains an irreducible curve of tri-degree $(1,1,0)$ and a line of tri-degree $(0,0,1)$, these two intersecting. The base ideal is 
    $$
    (x_1y_0-x_0y_1,z_1)\cap (x_1,y_1) \ .
    $$
    \item $\rho_{3}^{(1,1,1)} \equiv (x_1y_1z_1: x_0y_1z_1: x_1y_1z_0: x_1y_0z_0)$
    \vskip0.0cm
    \noindent The entries of $\rho_{3}^{(1,1,1)}$ are given by the $3\times 3$ minors of the matrix
    $$
    \begin{pmatrix}
    0 & -y_1 & 0 \\
    0 & y_0 & -z_1 \\
    -x_1 & 0 & 0 \\
    x_0 & 0 & z_0 
    \end{pmatrix}\ .
    $$
    A surface in $W_{\rho_3}$ contains three lines of tri-degrees $(1,0,0), (0,1,0),$ and $(0,0,1)$, one of them intersecting the other two at distinct points. The base ideal is 
    $$
    (x_1,y_1)\cap (x_1,z_1)\cap (y_1,z_0) \ .
    $$
    
    \item $\rho_{4}^{(1,1,1)} \equiv (x_1y_1z_1: x_0y_1z_1: x_1y_0z_1: x_1y_1z_0)$
    \vskip0.0cm
    \noindent The entries of $\rho_{4}^{(1,1,1)}$ are given by the $3\times 3$ minors of the matrix
    $$
    \begin{pmatrix}
    0 & 0 & -z_1 \\
    0 & -y_1 & 0 \\
    -x_1 & 0 & 0 \\
    x_0 & y_0 & z_0 
    \end{pmatrix}\ .
    $$
    A surface in $W_{\rho_4}$ contains three lines of tri-degrees $(1,0,0), (0,1,0),$ and $(0,0,1)$ that intersect at a common point. The base ideal is 
    $$
    (x_1,y_1)\cap (x_1,z_1)\cap (y_1,z_1) \ .
    $$
\end{itemize}
\end{theorem}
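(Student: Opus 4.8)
**The plan is to classify the orbits of type $(1,1,1)$ birational maps by reducing to the classification of Cohen-Macaulay curves of tri-degree $(1,1,1)$ already established in Lemma \ref{lemma: canonical CM curves}.** The key structural input, to be drawn from Proposition \ref{proposition: commutative diagram}, is that for a type $(1,1,1)$ map the base locus $Z_\phi$ is itself a connected Cohen-Macaulay curve $C_\phi$ of tri-degree $(1,1,1)$ with $\dim_\C W_\phi = 4$, so that in fact $W_\phi = (I_{C_\phi})_{(1,1,1)}$ and the entries $f_0,\dots,f_3$ span \emph{all} tri-linear forms vanishing on $C_\phi$. This collapses the problem: the orbit of $\phi$ under the right-action is determined entirely by the $\Aut(X)$-equivalence class of $C_\phi$, since two base curves related by an automorphism $\xi \in \Aut(X)$ induce equal linear systems $W_{\phi\circ\xi} = \xi^*(W_\phi)$, and conversely maps with $\Aut(X)$-equivalent base loci have $W$'s in the same orbit.

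First I would invoke Lemma \ref{lemma: canonical CM curves} to reduce $C_\phi$, up to $\Aut(X)$, to one of the seven listed ideals $(1)$ through $(7)$. Then I would use the dimension count from the proof of Proposition \ref{proposition: commutative diagram}: the ideals $(2),(4),(5)$ yield $\dim_\C(I_C)_{(1,1,1)} = 3$, which contradicts the requirement $\dim_\C W_\phi \geq 4$ forced by the independence of the four entries $f_l$. Hence only the connected curves $(1),(3),(6),(7)$ can arise as $C_\phi$, giving exactly the four candidate orbits $\rho_1,\rho_2,\rho_3,\rho_4$. For each surviving case I would compute $(I_C)_{(1,1,1)}$ explicitly to exhibit the four spanning tri-linear forms — these are the stated entries — and verify that the resulting map is genuinely birational of type $(1,1,1)$, which one can do either by producing the inverse directly or by checking the Jacobian Dual Criterion of §\ref{subsection: jacobian dual criterion} on the given representative.

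The remaining bookkeeping is to match each canonical curve with its geometric description and to present the entries as the maximal minors of the displayed $4\times 3$ matrices. The matrix presentation is the statement that the first syzygy module of the base ideal is free of the indicated shape, so the entries are recovered as the signed $3\times 3$ minors (a Hilbert–Burch type structure); I would verify for one representative that the minors indeed equal the four generators of $(I_C)_{(1,1,1)}$ and that the geometric incidence data (a single irreducible $(1,1,1)$ curve for $(1)$; an irreducible $(1,1,0)$ conic meeting a $(0,0,1)$ line for $(3)$; three coordinate lines with various incidences for $(6),(7)$) reads off from the primary decomposition of the base ideal.

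\textbf{The main obstacle} I anticipate is not the curve classification itself — that is handed to us by Lemma \ref{lemma: canonical CM curves} — but rather confirming that each of the four surviving curves actually \emph{is} the base locus of a birational map of type $(1,1,1)$, and that no two of them lie in a common orbit. Distinctness of the orbits follows from the fact that the $\Aut(X)$-classes of the base curves are distinct (they have different primary decompositions and different singularity/incidence structure), but one must be careful that the passage from $W_\phi$ back to $C_\phi$ is well-defined and $\Aut(X)$-equivariant. The genuine verification of birationality for each representative, ensuring the image is $\P_\C^3$ and not a proper subvariety, is the step where I would rely on a direct computation of the inverse or on \textsc{Macaulay2}, as is done elsewhere in the paper.
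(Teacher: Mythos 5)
Your proposal follows essentially the same route as the paper: reduce to the connected Cohen--Macaulay curve $C_\phi$ of tri-degree $(1,1,1)$, use the dimension count $\dim_\C(I_{C_\phi})_{(1,1,1)}=4$ from Proposition \ref{proposition: commutative diagram} to see that $W_\phi$ is determined by $C_\phi$ (and to rule out the disconnected curves $(2),(4),(5)$ of Lemma \ref{lemma: canonical CM curves}), and then read off the four orbits from the list of canonical curves. The only small discrepancy is bookkeeping: the paper derives the fact that $Z_\phi$ itself is a Cohen--Macaulay curve (i.e.\ $Z_\phi=C_\phi$, no embedded or isolated points) from the Hilbert--Burch resolution of Proposition \ref{proposition: syzygies (1,1,1)} rather than from Proposition \ref{proposition: commutative diagram}, but since the orbit is determined by $W_\phi$ and hence by $C_\phi$ alone, this does not affect your argument.
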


\begin{proof}
From Proposition \ref{eq:HBres}, the base locus of $\phi$ is a connected Cohen-Macaulay curve of tri-degree $(1,1,1)$, i.e$.$ we have $Z_\phi = C_\phi$. Additionally, from the proof of Proposition \ref{proposition: commutative diagram} given a connected Cohen-Macaulay curve $C$ of tri-degree $(1,1,1)$ we have $\dim_\C (I_C)_{(1,1,1)} = 4$. Hence, any such curve determines a unique $W_\phi$ in $\Bir_{(1,1,1)}$. On the other hand, by Lemma \ref{lemma: canonical CM curves} any connected Cohen-Macaulay curve of tri-degree $(1,1,1)$ is equivalent, by means of an automorphism of $X$, to the base loci of one of the birational maps in the statement.
\end{proof}

\subsection{Orbits of birational maps of type $(1,1,2)$, $(1,2,1)$ and $(2,1,1)$}
\label{subsection: (1,1,2) orbits} 
Let $C\subset X$ be a connected Cohen-Macaulay curve of tri-degree either $(1,1,0)$, $(1,0,1)$, or $(0,1,1)$. From Lemma \ref{lemma: canonical CM curves} and Proposition \ref{proposition: commutative diagram}, by means of an automorphism of $X$ the curve $C$ can be transformed into either $C_o$ or $C_\times$, respectively defined by the ideals $I_{C_o} = (x_0 y_1 - x_1 y_0, z_1)$ and $I_{C_\times} = (x_1 y_1 , z_1)$.
In particular, any birational map of type $(1,1,2)$, $(1,2,1)$, or $(2,1,1)$ lies in the orbit of a birational map with either $C_o$ or $C_\times$ in its base locus.
\vskip0.2cm
According to Diagram \eqref{eq: commutative diagram}, in order to study the orbits of the right-action in $\Birr_{(1,1,2)}$, $\Birr_{(1,2,1)}$ and $\Birr_{(2,1,1)}$ it is sufficient to compute the orbits of the group action given by the stabilizer $\text{Stab}(C_o) \leq \Aut(X)$ of the curve $C_o$ acting on $Y_{W_o}$, where $W_o \coloneqq {(I_{C_o})}_{(1,1,1)}$, and the orbits of the group action given by the stabilizer $\text{Stab}(C_\times) \leq \Aut(X)$ of the curve $C_\times$ acting on $Y_{W_\times}$, where $W_\times \coloneqq {(I_{C_\times})}_{(1,1,1)}$.

\subsubsection{Orbits in $Y_{W_o}$}
\label{subsection: orbits nonsingular (1,1,0)}
In this subsection, we set $W = W_o$. The birational map $\zeta_W$ appearing in Diagram \eqref{eq: commutative diagram} is 
$$
\zeta_{W}: (x_0:x_1)\times (y_0:y_1)\times (z_0:z_1) \mapsto ( x_0y_0z_1 : x_1y_0z_1 : x_0y_1z_1 : x_1y_1z_1 : (x_0y_1-x_1y_0)z_0 )\ ,
$$
and $Y_W\subset \P_\C^4$ is the cone determined by $t_0t_3 - t_1t_2 = 0$. Equivalently, 
$Y_W$ is the image of the projection $\Pi_W: \blowup_W X \subset X\times \P_\C^4 \xrightarrow{} \P_\C^4$, where $\pi_W : \blowup_W X \xrightarrow{} X$ is the blow-up of $X$ along $C_o$. The exceptional divisor $E_W$ of $\blowup_W X$ projects onto a two-dimensional cone in $Y_W$. Namely, the defining equation of $\Pi_W (E_W)$ in $Y_W$ is $t_1 - t_2 = 0$, and its vertex is the point $P=$ ($0\,$:$\,0\,$:$\,0\,$:$\,0\,$:$\,1$). 
We write $H_{1-2}$ and $H_4$ for the divisors in $Y_W$ determined by $t_1-t_2 = 0$ and $t_4 = 0$, respectively.
\begin{lemma}
\label{lemma: (1,1,2) orbits in Y_W irreducible}
The group action of $\text{Stab}(C_o)$ on $Y_W$ determines the four orbits:
$$
Y_W \backslash H_{1-2}\ ,\ H_{1-2}\backslash (H_{4}\cup P)\ ,\ H_{1-2}\cap H_4\ ,\ P
$$
\end{lemma}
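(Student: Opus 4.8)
The plan is to describe $\mathrm{Stab}(C_o)$ explicitly, compute the induced linear action on $\P(W^\vee)\cong\P_\C^4$, and then check that each of the four listed sets is invariant and acted on transitively. For the stabilizer, recall from §\ref{subsection: automorphisms of X} that $\Aut(X)\cong\mathfrak{S}_3\ltimes\PGL(2,\C)^3$. Since $C_o$ has tri-degree $(1,1,0)$ and is cut out by $(x_0y_1-x_1y_0,\,z_1)$, an automorphism $\xi=\sigma\circ(\eta_1,\eta_2,\eta_3)$ fixes $C_o$ exactly when: $\sigma$ fixes the third factor, so $\sigma\in\{e,(12)\}$; the map $\eta_3$ fixes the point $(1:0)$, so $\eta_3$ is upper triangular; and $\eta_1=\eta_2$, because the diagonal $\{x_0y_1=x_1y_0\}\subset\P_\C^1\times\P_\C^1$ is preserved by $(\eta_1,\eta_2)$ precisely when $\eta_1=\eta_2$. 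Hence $\mathrm{Stab}(C_o)$ is generated by the diagonal copy of $\PGL(2,\C)$ (where $\eta_1=\eta_2=\eta$), the Borel $B$ of upper-triangular maps in the third factor, and the transposition of the first two factors.

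Next I would compute the action on $\P_\C^4$. As $\xi$ preserves $I_{C_o}$ it preserves $W=(I_{C_o})_{(1,1,1)}$, inducing a linear automorphism of $\P(W^\vee)$ fixing $Y_W$; pulling back the five entries of the map $\zeta_W$ displayed above yields the action on the coordinates $t_0,\dots,t_4$. A short computation gives: the diagonal $\eta$ acts on $(t_0:t_1:t_2:t_3)$ through the $\mathrm{Sym}^2$ representation, i.e.\ as the diagonal of $\PGL(2,\C)^2$ once the base quadric $Q=\{t_0t_3=t_1t_2\}$ is identified with $\P_\C^1\times\P_\C^1$, and scales $t_4$ by $\det\eta$; the unipotent $z_0\mapsto z_0+s z_1$ fixes $t_0,\dots,t_3$ and sends $t_4\mapsto t_4+s(t_2-t_1)$; the torus in $B$ rescales $t_4$ relative to $t_0,\dots,t_3$; and the transposition acts by $t_1\leftrightarrow t_2,\ t_4\mapsto -t_4$. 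In particular $H_{1-2}=\{t_1=t_2\}$ is invariant and $P=(0:0:0:0:1)$ is a fixed point.

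For transitivity I would use that $H_{1-2}$ meets $Q$ in the diagonal conic $\Delta_Q$, that $Y_W$ is the cone over $Q$ with vertex $P$, and that the diagonal $\PGL(2,\C)$ has exactly two orbits on $Q\cong\P_\C^1\times\P_\C^1$, namely $\Delta_Q$ and its complement. Then: the point $P$ is fixed, hence an orbit; on $H_{1-2}\cap H_4$, which is the base conic $\Delta_Q$ of the cone $\Pi_W(E_W)$, the diagonal $\PGL(2,\C)$ is already transitive; on $H_{1-2}\setminus(H_4\cup P)$ every point projects to a point of $\Delta_Q$ with $t_4\neq 0$, so moving that projection to a fixed base point by the diagonal $\PGL(2,\C)$ and rescaling $t_4$ by the torus reaches one representative; and on the open stratum $Y_W\setminus H_{1-2}$ every point projects off $\Delta_Q$, so $t_2-t_1\neq 0$ and the unipotent $t_4\mapsto t_4+s(t_2-t_1)$ sets $t_4=0$, after which the diagonal $\PGL(2,\C)$ moves the off-diagonal base point to a fixed one. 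Since the four sets partition $Y_W$ and each is invariant and a single orbit, they are exactly the orbits.

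The main obstacle is the contrasting behaviour of the unipotent part of $B$, which is also the conceptual crux of the statement: off $H_{1-2}$ one has $t_2-t_1\neq 0$, so the unipotent acts nontrivially along the cone direction and fuses the loci $t_4=0$ and $t_4\neq 0$ into the single open orbit, whereas on $H_{1-2}$ it fixes $t_4$ entirely, which is precisely what forces the finer splitting of $H_{1-2}$ into $H_{1-2}\setminus(H_4\cup P)$, $H_{1-2}\cap H_4$, and $P$. The genuinely computational part is getting the induced linear representation right (the second step); everything afterward is routine transitivity bookkeeping for $\PGL(2,\C)$ acting diagonally on $\P_\C^1\times\P_\C^1$.
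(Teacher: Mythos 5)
Your proof is correct, but it takes a genuinely different route from the paper's. The paper works on the source: it lists the four orbits of $\mathrm{Stab}(C_o)$ on $X$ itself (the curve $C_o$, the rest of the divisor $z_1=0$, the rest of the divisor $x_0y_1-x_1y_0=0$, and the open complement), checks transitivity there using the same explicit description of the stabilizer that you derive (diagonal $\PGL(2,\C)$ on the first two factors, the Borel fixing $(1:0)$ on the third, plus the swap of the first two factors), and then transports the last three orbits to $Y_W$ via $\zeta_W$; the remaining stratum $H_{1-2}\setminus(H_4\cup P)$ is obtained as the projection of the general orbit on the exceptional divisor of $\blowup_W X$. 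You instead compute the induced projective-linear action on $\P(W^\vee)\cong\P_\C^4$ and verify invariance and transitivity directly on $Y_W$. Your version makes the fourth orbit completely explicit --- in the paper this is the one step that rests on an unspelled-out analysis of the action on $E_W$ --- and your observation that the unipotent translation $t_4\mapsto t_4+s(t_2-t_1)$ degenerates exactly on $H_{1-2}$ is precisely the mechanism that the blow-up picture encodes geometrically; the paper's route, on the other hand, reuses the same template uniformly for Lemmas \ref{lemma: (1,1,2) orbits in Y_W reducible}, \ref{lemma: (1,2,2) orbits on Y_W}, and \ref{lemma: (2,2,2) orbits on Y_W}. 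One small imprecision in your representation-theoretic step: the span of $t_0,\dots,t_3$ is $V\otimes V\cong\mathrm{Sym}^2V\oplus\wedge^2V$ under the diagonal copy of $\PGL(2,\C)$, not $\mathrm{Sym}^2V$ alone (the summand $\wedge^2V$ is spanned by $t_1-t_2$, consistently with $H_{1-2}$ being invariant); your subsequent identification with the diagonal action on the Segre quadric is the correct statement and is all the argument actually uses.
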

\begin{proof}
Write $D_{xy}$ and $D_z$ for the divisors in $X$ defined by $x_0y_1-x_1y_0 = 0$ and $z_1 = 0$, respectively. The group action of $\text{Stab}(C_o)$ on $X$ determines the four orbits
\begin{equation}
\label{eq: orbits in X of stabilizer of (1,1,0)}
C_o = D_{xy}\cap D_z\ ,\ D_{z} \backslash C_o\ ,\ D_{xy} \backslash C_o\ ,\ X \backslash (D_{z}\cup D_{xy})\ .
\end{equation}
Namely, the action of an automorphism in $\text{Stab}(C_o)$ on the point $(x_0:x_1)\times (y_0:y_1) \times (z_0:z_1)$ is explicitly given by either
$$
\begin{pmatrix}
\lambda_{00} & \lambda_{01} \\
\lambda_{10} & \lambda_{11}
\end{pmatrix}
\begin{pmatrix}
x_0 \\
x_1
\end{pmatrix}
\times
\begin{pmatrix}
\lambda_{00} & \lambda_{01} \\
\lambda_{10} & \lambda_{11}
\end{pmatrix}
\begin{pmatrix}
y_0 \\
y_1
\end{pmatrix}
\times
\begin{pmatrix}
\mu_{00} & \mu_{01} \\
0 & \mu_{11}
\end{pmatrix}
\begin{pmatrix}
z_0 \\
z_1
\end{pmatrix}
$$
or 
$$
\begin{pmatrix}
\lambda_{00} & \lambda_{01} \\
\lambda_{10} & \lambda_{11}
\end{pmatrix}
\begin{pmatrix}
y_0 \\
y_1
\end{pmatrix}
\times
\begin{pmatrix}
\lambda_{00} & \lambda_{01} \\
\lambda_{10} & \lambda_{11}
\end{pmatrix}
\begin{pmatrix}
x_0 \\
x_1
\end{pmatrix}
\times
\begin{pmatrix}
\mu_{00} & \mu_{01} \\
0 & \mu_{11}
\end{pmatrix}
\begin{pmatrix}
z_0 \\
z_1
\end{pmatrix}
\ ,
$$
for some matrices $(\lambda_{ij})_{0\leq i,j \leq 1}, \, (\mu_{ij})_{0\leq i,j \leq 1}$ in $\PGL(2,\C)$,
depending on whether the tri-degrees in $X$ are preserved or permuted. In particular, it follows that $\text{Stab}(C_o)$ acts transitively on each of the orbits in \eqref{eq: orbits in X of stabilizer of (1,1,0)}. On the other hand, 
the last three orbits are respectively transformed by $\zeta_W$ into $P$, $H_{1-2}\cap H_4$, and $Y_W\backslash H_{1-2}$. Thus, we find three of the orbits in the statement. The orbit $H_{1-2}\backslash (H_4\cup P)$ corresponds to the projection by $\Pi_W$ of general orbit in $E_W$, when we let $\text{Stab}(C_o)$ act on $\blowup_W X$.
\end{proof}
\subsubsection{Orbits in $Y_{C_\times}$} In this subsection, we set $W = W_\times$. The birational map $\zeta_W$ in Diagram \eqref{eq: commutative diagram} is
$$
\zeta_{W}: (x_0:x_1)\times (y_0:y_1)\times (z_0:z_1) \mapsto ( x_0y_0z_1 : x_1y_0z_1 : x_0y_1z_1 : x_1y_1z_1 : x_1y_1z_0 )\ .
$$
Once more, $Y_W$ is determined by $t_0t_3 - t_1t_2 = 0$. However, $\Pi_W (E_W)$ is now the union of the loci $t_1 = t_3 = 0$ and $t_2 = t_3 = 0$ in $\P_\C^4$. We maintain the notation of §\ref{subsection: orbits nonsingular (1,1,0)}, and write $H_i$ for the divisor in $Y_W$ defined by $t_i = 0$. Additionally, we set $O=$ ($1\,$:$\,0\,$:$\,0\,$:$\,0\,$:$\,0$) and write $r$ for the projective line $\overline{OP} = H_1 \cap H_2 \cap H_3$.

\begin{lemma}
\label{lemma: (1,1,2) orbits in Y_W reducible}
The group action of $\text{Stab}(C_\times)$ on $Y_W$ determines the six orbits: 
\begin{gather*}
Y_W \backslash ( (H_1 \cup H_2)\cap H_3 )\, ,\, ( (H_1 \cup H_2)\cap H_3 ) \backslash (H_4  \cup  r)\ ,\\
((H_1 \cup H_2)\cap H_3 \cap H_4)\backslash O \ ,\ r\, \backslash (O  \cup  P )\, ,\, O\, ,\, P
\end{gather*}
\end{lemma}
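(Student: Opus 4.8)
The plan is to mimic the strategy used in the proof of Lemma~\ref{lemma: (1,1,2) orbits in Y_W irreducible}: first understand the orbits of $\text{Stab}(C_\times)$ acting on $X$ itself, then push these orbits forward through $\zeta_W$ (and through the blow-up $\blowup_W X$ to capture the exceptional behaviour). First I would write down the stabilizer explicitly. Since $C_\times$ is defined by $I_{C_\times}=(x_1y_1,z_1)$ and consists of two skew lines (of tri-degrees $(1,0,0)$ and $(0,1,0)$) lying in the plane $z_1=0$, an automorphism of $X$ stabilizing $C_\times$ must fix the divisor $z_1=0$ and permute or fix the two lines $\{x_1=z_1=0\}$ and $\{y_1=z_1=0\}$. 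Concretely, the first two $\PGL(2,\C)$ factors must be upper-triangular (fixing the points $(1:0)$ in the $x$- and $y$-coordinates), the third factor must be upper-triangular (fixing $z_1=0$), and one may additionally swap the $x$- and $y$-factors. I would record the induced action on $X$, identifying the natural invariant divisors $D_x=\{x_1=0\}$, $D_y=\{y_1=0\}$, and $D_z=\{z_1=0\}$.

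Next I would determine the orbits of this stabilizer on $X$, which should be a short stratification by intersections of $D_x,D_y,D_z$ together with the special points where the lines of $C_\times$ meet. I would then transport these orbits through $\zeta_W$, checking which strata map into $Y_W=\{t_0t_3-t_1t_2=0\}$ and onto the various loci $H_1,H_2,H_3,H_4$, the line $r=H_1\cap H_2\cap H_3$, and the two points $O$ and $P$. The asymmetry between this case and the $C_o$ case is that $\Pi_W(E_W)$ is now reducible---the union of $\{t_1=t_3=0\}$ and $\{t_2=t_3=0\}$---reflecting that $C_\times$ has two components; this is what splits off the extra strata and produces six orbits rather than four. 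I would verify that the two points $O$ and $P$ are genuinely distinct orbits (each is a single fixed point with no other point in its orbit) and that $r\setminus(O\cup P)$ is a single orbit, by checking transitivity of the upper-triangular action on the relevant open dense subset of each stratum.

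The exceptional strata---namely $((H_1\cup H_2)\cap H_3)\setminus(H_4\cup r)$ and $((H_1\cup H_2)\cap H_3\cap H_4)\setminus O$---require working on $\blowup_W X$ rather than on $X$ directly, exactly as in the preceding lemma where the orbit $H_{1-2}\setminus(H_4\cup P)$ arose from a general orbit in the exceptional divisor $E_W$. I would let $\text{Stab}(C_\times)$ act on $\blowup_W X$, identify the orbits inside $E_W$ (which now has two components over the two lines of $C_\times$), and check that their images under $\Pi_W$ fill out precisely these two strata. The possible $x\leftrightarrow y$ swap in the stabilizer is what glues the contributions from the two components of $E_W$ into single orbits, so that $(H_1\cup H_2)\cap H_3$ is not further subdivided into an $H_1$-part and an $H_2$-part.

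The main obstacle I expect is the bookkeeping on the exceptional divisor: because $C_\times$ is reducible and the blow-up centre is not smooth (the two lines are disjoint, so $E_W$ is a disjoint union of two components, each a $\P^1$-bundle), one must track carefully how the two components of $\Pi_W(E_W)$ sit inside $Y_W$ and how the stabilizer---including the swap---acts on them. Verifying that the images of the generic exceptional orbits coincide with the stated locally closed strata, and that no spurious additional orbit appears, is the delicate point; I would handle it by an explicit local computation of $\zeta_W$ near a general point of each line of $C_\times$, ideally cross-checked with \textsc{Macaulay2} as was done for the degree computations in Proposition~\ref{proposition: commutative diagram}.
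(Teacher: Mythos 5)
Your overall strategy is exactly the paper's: describe $\text{Stab}(C_\times)$ explicitly, compute its orbits on $X$, push them forward through $\zeta_W$, and account for the remaining strata by letting the stabilizer act on $\blowup_W X$ and projecting the orbits of the exceptional divisor. Your description of the stabilizer (upper-triangular in each factor, fixing $(1:0)$ in the $x$-, $y$- and $z$-coordinates, together with the $x\leftrightarrow y$ swap) is also correct.

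However, there is a genuine error in your geometric setup: $C_\times$, defined by $I_{C_\times}=(x_1y_1,z_1)=(x_1,z_1)\cap(y_1,z_1)$, is \emph{not} a pair of skew lines. Its two components $(1:0)\times\P_\C^1\times(1:0)$ and $\P_\C^1\times(1:0)\times(1:0)$ meet at the point $Q'=(1:0)^3$ (this is case $(10)$ of Lemma \ref{lemma: canonical CM curves}; the skew-line case $(9)$ is precisely the one excluded in the proof of Proposition \ref{proposition: commutative diagram}, and indeed two lines of tri-degrees $(1,0,0)$ and $(0,1,0)$ inside the divisor $z_1=0\cong\P_\C^1\times\P_\C^1$ can never be disjoint). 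Your proposal is internally inconsistent on this point -- you speak both of ``skew lines'' and of ``the special points where the lines of $C_\times$ meet'' -- but the later claim that ``the two lines are disjoint, so $E_W$ is a disjoint union of two components, each a $\P^1$-bundle'' is the one that would derail the computation at exactly the step you identify as delicate. The singular point $Q'$ is a fixed point of $\text{Stab}(C_\times)$ and is an orbit on $X$ in its own right (the paper's list of orbits on $X$ begins with $Q'$ and $C_\times\backslash Q'$); the fiber of $\blowup_W X$ over $Q'$ is larger than a $\P^1$ and its image under $\Pi_W$ is precisely the line $r=H_1\cap H_2\cap H_3$ along which the two components $\{t_1=t_3=0\}$ and $\{t_2=t_3=0\}$ of $\Pi_W(E_W)$ meet. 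With a disjoint-union picture of $E_W$ you would obtain two disjoint surfaces in $Y_W$ instead, and the orbits $r\backslash(O\cup P)$ and $O$ -- which are exactly what distinguish the six-orbit count here from the four orbits of Lemma \ref{lemma: (1,1,2) orbits in Y_W irreducible} -- would not be produced. Correcting the setup (six orbits on $X$, stratified by whether each coordinate equals $(1:0)$, merged in pairs by the swap, with $Q'$ and the fiber over it treated separately on the blow-up) restores the paper's argument.
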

\begin{proof}
Write $D_x$, $D_y$, and $D_z$ for the divisors in $X$ respectively determined by $x_1 = 0$, $y_1 = 0$, and $z_1 = 0$, and moreover let $Q' = (1:0)^3\in X$. The group action of $\text{Stab}(C_\times)$ on $X$ determines the five orbits 
$$
Q'\ ,\ C_\times \backslash Q'\ ,\ D_{z} \backslash C_\times \ ,\ (D_x \cup D_y)\backslash C_\times \ ,\ X \backslash (D_x\cup D_y\cup D_z)\ .
$$
The last three orbits are respectively transformed by $\zeta_W$ into $P$, $((H_1 \cup H_2)\cap H_3\cap H_4)\backslash O$, and $Y_W\backslash ( (H_1 \cup H_2)\cap H_3 )$. Therefore, we find three of the orbits in the statement. The orbits $( (H_1 \cup H_2)\cap H_3 ) \backslash (H_4\cup r)$, $r \backslash (O \cup P)$, and $O$ correspond to the projection by $\Pi_W$ of the orbits in $E_W$, when we let $\text{Stab}(C_\times)$ act on $\blowup_W X$.
\end{proof}

The following is a classification theorem for the tri-linear birational maps of type $(1,1,2)$, $(1,2,1)$, and $(2,1,1)$. Equivalently, it provides the orbits of the right-action in $\Birr_{(1,1,2)}$, $\Birr_{(1,2,1)}$, and $\Birr_{(2,1,1)}$.

\begin{theorem}
\label{theorem: classification (1,1,2)}
Let $\phi$ be birational of type either $(1,1,2)$, $(1,2,1)$ or $(2,1,1)$. Then, $\phi$ belongs to the orbit of one of the following birational maps:
\begin{itemize}
    \item $\rho_1^{(1,1,2)} \equiv ( x_1 y_1 z_1 : x_0 y_1 z_1 : x_0 y_0 z_1 : x_1 y_0 z_0 - x_0 y_1 z_0 )$ 
    \vskip0.0cm
    \noindent A surface in $W_{\rho_1}$ contains an irreducible curve of tri-degree $(1,1,0)$ and an isolated point. The base ideal is
    $$
    (x_0 y_1 - x_1 y_0, z_1)\cap (x_0, y_1, z_0)\ .
    $$
    \item $\rho_2^{(1,1,2)} \equiv (  x_0 y_1 z_1 : x_1 y_0 z_1 : x_0 y_0 z_1 : x_1 y_0 z_0 -x_0 y_1 z_0 - x_1 y_1 z_1 )$
    \vskip0.0cm
    \noindent A surface in $W_{\rho_2}$ contains an irreducible curve of tri-degree $(1,1,0)$ and has contact to a surface of tri-degree $(1,1,1)$ at a point of the curve. The base ideal is
    $$
    (x_0 y_1 - x_1 y_0, z_1)\, \cap  (z_1^2 , y_0 z_1, x_0 z_1, y_0^2 , x_0 y_0, x_0^2 , x_1 y_0 z_0 - x_0 y_1 z_0 - x_1 y_1 z_1)\ .
    $$
    \item $\rho_3^{(1,1,2)} \equiv ( x_0 y_1 z_1 : x_1 y_0 z_1 : x_0 y_0 z_1 : x_1 y_1 z_0)$
    \vskip0.0cm
    \noindent A surface in $W_{\rho_3}$ contains a pair of intersecting lines, of tri-degrees $(1,0,0)$ and $(0,1,0)$, and an isolated point. The base ideal is
    $$
    (x_1,z_1)\cap (y_1,z_1)\cap (x_0,y_0,z_0)\ .
    $$
    \item $\rho_4^{(1,1,2)} \equiv (x_1 y_1 z_1 : x_0 y_1 z_1: x_0 y_0 z_1:  x_1 y_1 z_0 - x_1 y_0 z_1 )$
    \vskip0.0cm
    \noindent A surface in $W_{\rho_4}$ contains a pair of intersecting lines, of tri-degrees $(1,0,0)$ and $(0,1,0)$, and is tangent to a curve of tri-degree $(0,1,1)$ at one of the points of the line of tri-degree $(1,0,0)$. The base ideal is
    $$
     (x_1,z_1)\cap (y_1,z_1)\cap (x_0, y_0 z_1 - y_1 z_0, y_1^2 , y_1 z_1,  y_1^2)\ .
    $$
    \item $\rho_5^{(1,1,2)} \equiv ( x_1 y_1 z_1 : x_0 y_1 z_1 : x_1 y_0 z_1 : x_1 y_1 z_0 + x_0 y_0 z_1 )$
    \vskip0.0cm
    \noindent A surface in $W_{\rho_5}$ contains a pair of intersecting lines, of tri-degrees $(1,0,0)$ and $(0,1,0)$, and has contact to a surface of tri-degree $(1,1,1)$ at the point of intersection of the lines. The base ideal is
    $$
    (x_1,z_1)\cap (y_1,z_1)\,\cap 
    (x_1 y_1 z_0 + x_0 y_0 z_1 , x_1^2 , x_1 z_1, y_1^2  , y_1 z_1 , z_1^2)\ .
    $$
\end{itemize}
\end{theorem}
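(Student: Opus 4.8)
The plan is to reduce the classification of orbits in $\Birr_{(1,1,2)}$, $\Birr_{(1,2,1)}$, and $\Birr_{(2,1,1)}$ to the two orbit computations already carried out in Lemmas \ref{lemma: (1,1,2) orbits in Y_W irreducible} and \ref{lemma: (1,1,2) orbits in Y_W reducible}. By the preliminary discussion in this subsection, every birational map $\phi$ of the relevant type lies, up to the right-action, in the orbit of a map whose Cohen-Macaulay base curve $C_\phi$ is either $C_o$ or $C_\times$. According to Diagram \eqref{eq: commutative diagram} and Proposition \ref{proposition: commutative diagram}, once $C_\phi$ is fixed the vector space $W_\phi$ equals the five-dimensional space $W_o$ or $W_\times$, and $\phi$ factors as $\pi_L \circ \zeta_W$ where $\pi_L$ is the projection of $Y_W\subset\P_\C^4$ from a single point (the center $L$ being a point because $N=4$ and the target is $\P_\C^3$). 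Thus the isomorphism class of $\phi$ under the residual symmetry is encoded by the position of the projection center, and the orbits of $\phi$ under the right-action correspond exactly to the orbits of this center under the stabilizer action on $Y_W$.

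First I would make precise the correspondence between projection centers and points of $Y_W$: since $\pi_L$ must be birational onto $\P_\C^3$ by Proposition \ref{proposition: commutative diagram}, and $Y_W$ is a quadric of minimal degree, the admissible centers $L$ are precisely the points of $Y_W$ not lying on $Y_W$ in a way that would break birationality — concretely, the center is itself a point of $Y_W$, and projecting the quadric from a point on it yields the birational parametrization $Y_W\dashrightarrow\P_\C^3$. Composing $\zeta_W$ with projection from a center in a given $\text{Stab}(C_o)$-orbit (resp.\ $\text{Stab}(C_\times)$-orbit) produces maps that are all right-equivalent, so the four orbits of Lemma \ref{lemma: (1,1,2) orbits in Y_W irreducible} and the six orbits of Lemma \ref{lemma: (1,1,2) orbits in Y_W reducible} furnish all the orbits in $\Birr_{(1,1,2)}$ with base curve $C_o$ and $C_\times$ respectively. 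The bookkeeping step is then to discard those orbit representatives that fail to yield a genuine type-$(1,1,2)$ birational map (for instance, a center whose projection is degenerate, or one forcing $Z_\phi$ to acquire tri-degree $(1,1,1)$ rather than $(1,1,0)$), leaving exactly the five representatives $\rho_1^{(1,1,2)},\dots,\rho_5^{(1,1,2)}$.

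For each surviving representative I would then verify the three explicit assertions in the statement: that the given four-tuple is birational of the stated type, the geometric description of a general surface in $W_{\rho_i}$, and the displayed base ideal. Birationality and type follow by exhibiting the inverse through the Jacobian Dual Criterion of §\ref{subsection: jacobian dual criterion}, or equivalently by tracing the factorization through the chosen center. The base ideal can be computed directly (these are small saturated ideal computations, readily done in \textsc{Macaulay2}), and the geometric description of surfaces in $W_{\rho_i}$ — the presence of an isolated point, a point of contact to a $(1,1,1)$ surface, or a point of tangency to a $(0,1,1)$ curve — is read off from the position of the center relative to the vertex $P$, the divisor $H_{1-2}$ (resp.\ the lines $r$ and points $O,P$) identified in the two lemmas, since these distinguished loci of $Y_W$ are exactly the images of the exceptional and special loci on $\blowup_W X$.

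The main obstacle I anticipate is not the algebra but the geometric interpretation: translating each orbit of the projection center on $Y_W$ into the correct intrinsic description of the base scheme $Z_\phi$ in $X$, and in particular correctly identifying the embedded or fat-point structure (the contact and tangency conditions) that the center imposes. The delicate case is $C_\times$, where the exceptional divisor projects to the two lines $t_1=t_3=0$ and $t_2=t_3=0$ and the six orbits must be matched to the four reducible-curve representatives $\rho_3,\rho_4,\rho_5$ together with the excluded degenerate orbits; disentangling which orbit yields a mere isolated point versus a point of tangency or contact, and confirming the resulting base ideals are the saturated intersections written in the statement, is where the care is required.
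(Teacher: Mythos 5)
Your proposal is correct and follows essentially the same route as the paper: reduce to the two normalized base curves $C_o$ and $C_\times$, invoke the orbit computations of Lemmas \ref{lemma: (1,1,2) orbits in Y_W irreducible} and \ref{lemma: (1,1,2) orbits in Y_W reducible} for the position of the projection center $L\in Y_W$, and then match each orbit to one of $\rho_1^{(1,1,2)},\dots,\rho_5^{(1,1,2)}$ while discarding the degenerate centers (namely $L=P$, which produces a common factor, and the orbits that land back in type $(1,1,1)$). The paper's proof is exactly this case-by-case matching, so the only remaining work in your plan is the explicit bookkeeping you already describe.
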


\begin{proof}
From Diagram \eqref{eq: commutative diagram}, $\phi$ lies in the orbit of a birational map that factors as $\zeta_W \circ \pi_L$ for either $W = W_o$ or $W = W_\times$ and some point $L\in Y_W$. In particular, the orbit of $\phi$ is determined by one of the orbits listed in Lemmas \ref{lemma: (1,1,2) orbits in Y_W irreducible} and \ref{lemma: (1,1,2) orbits in Y_W reducible}. In the case that $L = P$, we find a common factor to the entries of $\phi$.  Excluding this case, the possibilities when $W = W_o$ are the following:
\begin{itemize}
    \item If $L \in Y_W \backslash H_{1-2}$, $\phi$ lies in the orbit of $\rho_1^{(1,1,2)}$
    \item If $L \in H_{1-2} \backslash (H_4 \cup P)$, $\phi$ lies in the orbit of $\rho_2^{(1,1,2)}$
    \item If $L \in H_{1-2}\cap H_4$, $\phi$ lies in the orbit of $\rho_2^{(1,1,1)}$
\end{itemize}
Secondly, if $W = W_\times$ the possibilities are the following:
\begin{itemize}
    \item If $L \in Y_W \backslash ( (H_1\cup H_2)\cap H_3 )$, $\phi$ lies in the orbit of $\rho_3^{(1,1,2)}$
    \item If $L \in ( (H_1\cup H_2)\cap H_3 ) \backslash (H_4 \cup r)$, $\phi$ lies in the orbit of $\rho_4^{(1,1,2)}$
    \item If $L\in ((H_1 \cup H_2) \cap H_3 \cap H_4) \backslash O$, $\phi$ lies in the orbit of $\rho_3^{(1,1,1)}$
    \item If $L \in r \backslash (O \cup P)$, $\phi$ lies in the orbit of $\rho_5^{(1,1,2)}$
    \item If $L = O$, $\phi$ lies in the orbit of $\rho_4^{(1,1,1)}$
\end{itemize}
\end{proof}

\subsection{Orbits of birational maps of type $(1,2,2)$, $(2,1,2)$ and $(2,2,1)$}
\label{subsection: (1,2,2) orbits}

By Lemma \ref{lemma: canonical CM curves}, any Cohen-Macaulay curve in $X$ of tri-degree either $(1,0,0)$, $(0,1,0)$, or $(0,0,1)$ is a projective line, and by means of an automorphism of $X$ it can be transformed into the line $\ell$ of ideal $I_\ell = (y_1,z_1)$. In particular, any birational map of type $(1,2,2)$, $(2,1,2)$, or $(2,2,1)$ lies in the orbit of a birational map with $\ell$ in its base locus.
\vskip0.2cm
According to Diagram \eqref{eq: commutative diagram}, in order to study the orbits of the right-action in $\Birr_{(1,2,2)}$, $\Birr_{(2,1,2)}$, and $\Birr_{(2,2,1)}$ we can  compute the orbits of the group action given by the stabilizer $\text{Stab}(\ell) \leq \Aut(X)$ of $\ell$ acting on $Y_W$, where $W = {(I_\ell)}_{(1,1,1)}$. 
Namely, the birational map in the diagram $\zeta_W : X \dashrightarrow Y_W \subset \P_\C^5$ is 
$$
\zeta_W :
(x_0:x_1)\times (y_0:y_1)\times (z_0:z_1) \mapsto (
x_0 y_0 z_1 :
x_1 y_0 z_1 :
x_0 y_1 z_1 : 
x_1 y_1 z_1 :
x_0 y_1 z_0 :
x_1 y_1 z_0
)\ ,
$$
and $Y_W$ is determined by the ideal
$
(t_0t_5 - t_1t_4 , t_2t_5 - t_3t_4 , t_1t_2 - t_0t_3)
$. 
The defining equations of $\Pi_W(E_W)$ in $Y_W$ are $t_2 = t_3 = 0$. We write $H_{01}$, $H_{23}$, and $H_{45}$ for the subvarieties of $Y_W$ given respectively by $t_0 = t_1 = 0$, $t_2 = t_3 = 0$, and $t_4 = t_5 = 0$.

\begin{lemma}
\label{lemma: (1,2,2) orbits on Y_W}
The group action of $\text{Stab}(\ell)$ on $Y_W$ determines the three orbits:
$$
Y_W \backslash H_{23}\ ,\ H_{23}\backslash (H_{01}\cup H_{45})\ ,\ H_{23}\cap (H_{01}\cup H_{45})
$$
\end{lemma}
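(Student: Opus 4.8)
The plan is to follow the method of Lemmas \ref{lemma: (1,1,2) orbits in Y_W irreducible} and \ref{lemma: (1,1,2) orbits in Y_W reducible}: first determine the orbits of $\text{Stab}(\ell)$ on $X$, then transport them through the equivariant map $\zeta_W$, and finally account for the part of $\Pi_W(E_W)=H_{23}$ that is not reached from $X\backslash \ell$. Since $\ell$ has tri-degree $(1,0,0)$, every automorphism in $\text{Stab}(\ell)$ preserves this tri-degree, so by §\ref{subsection: automorphisms of X} it must fix the first factor and can at most interchange the second and third. Writing $B\leq \PGL(2,\C)$ for the Borel subgroup fixing the point $(1:0)$, one checks that $\text{Stab}(\ell)\cong \langle \tau\rangle \ltimes (\PGL(2,\C)\times B\times B)$, where $\tau$ is the automorphism swapping the last two factors of $X$ and the three factors $\PGL(2,\C),B,B$ act on $\{x_0,x_1\}$, $\{y_0,y_1\}$, $\{z_0,z_1\}$ respectively (indeed $\eta_2,\eta_3$ must fix $(1:0)$, i.e$.$ lie in $B$, while $\eta_1$ is arbitrary).

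Next I would record the orbits on $X$. Setting $D_y=\{y_1=0\}$ and $D_z=\{z_1=0\}$, so that $\ell=D_y\cap D_z$, the action of $\text{Stab}(\ell)$ produces exactly the three orbits $\ell$, $(D_y\cup D_z)\backslash \ell$, and $X\backslash (D_y\cup D_z)$. Transitivity is immediate, as $\PGL(2,\C)$ acts transitively on the first factor while $B$ acts transitively on $\P_\C^1\backslash \{(1:0)\}$, and $\tau$ merges $D_y\backslash \ell$ with $D_z\backslash \ell$. Since $\zeta_W$ is $\text{Stab}(\ell)$-equivariant and, as $\Pi_W$ contracts only $E_W$ onto $H_{23}$, restricts to an isomorphism over $Y_W\backslash H_{23}$, a direct evaluation of its entries gives $\zeta_W^{-1}(H_{23})\cap (X\backslash \ell)=(D_y\cup D_z)\backslash \ell$. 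Hence the orbit $X\backslash (D_y\cup D_z)$ maps isomorphically onto $Y_W\backslash H_{23}$, the first orbit in the statement. Evaluating $\zeta_W$ on $D_z\backslash \ell$ (resp$.$ $D_y\backslash \ell$) yields the line $H_{01}\cap H_{23}$ (resp$.$ $H_{45}\cap H_{23}$), and since $\tau$ acts on $Y_W$ by $t_0\leftrightarrow t_4$, $t_1\leftrightarrow t_5$ it interchanges these two lines; thus $(D_y\cup D_z)\backslash \ell$ maps onto the single orbit $H_{23}\cap (H_{01}\cup H_{45})$, the third orbit in the statement.

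It remains to identify $H_{23}\backslash (H_{01}\cup H_{45})$, the part of $\Pi_W(E_W)$ not reached from $X\backslash \ell$, as the last orbit. The cleanest route is a direct transitivity check using the linear action of $\text{Stab}(\ell)$ on $\P(W^\vee)=\P_\C^5$. On $H_{23}=\{t_2=t_3=0\}\cap Y_W$ the remaining equation is $t_0t_5=t_1t_4$, so a point off $H_{01}\cup H_{45}$ has the form $(\alpha:\beta:0:0:\lambda\alpha:\lambda\beta)$ with $(\alpha:\beta)\in\P_\C^1$ and $\lambda\in\C^*$. The subgroup $\PGL(2,\C)$ acts on the three pairs $(t_0,t_1)$, $(t_2,t_3)$, $(t_4,t_5)$ by the same matrix, hence moves the parameter $(\alpha:\beta)$ transitively while fixing $\lambda$; the diagonal torus of $B$ acting on $\{y_0,y_1\}$ rescales $(t_4,t_5)$ against $(t_0,t_1)$ and so realizes any value of $\lambda$. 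Therefore $H_{23}\backslash (H_{01}\cup H_{45})$ is a single orbit, and the three orbits exhaust $Y_W$.

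The main obstacle is exactly this exceptional locus $H_{23}\backslash (H_{01}\cup H_{45})$: one must confirm both that it is disjoint from the image of $X\backslash \ell$ under $\zeta_W$ (so that it genuinely corresponds to the blow-up direction along $\ell$, i.e$.$ to the dense orbit of $E_W$) and that $\text{Stab}(\ell)$ acts transitively on it. The delicate bookkeeping lies in the incidences among $H_{01}$, $H_{23}$, $H_{45}$ — in particular verifying that the two boundary lines $H_{01}\cap H_{23}$ and $H_{45}\cap H_{23}$ are precisely the images of $D_z\backslash \ell$ and $D_y\backslash \ell$. The explicit linear action on $\P_\C^5$ makes all of this transparent, and is the route I would take in preference to an abstract analysis of the orbits of $\blowup_W X$.
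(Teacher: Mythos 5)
Your proof is correct and follows the same strategy as the paper's: compute the three orbits $\ell$, $(D_y\cup D_z)\backslash\ell$, and $X\backslash(D_y\cup D_z)$ of $\mathrm{Stab}(\ell)$ on $X$, and push the last two through $\zeta_W$ to obtain $Y_W\backslash H_{23}$ and $H_{23}\cap(H_{01}\cup H_{45})$. The only divergence is in the remaining orbit $H_{23}\backslash(H_{01}\cup H_{45})$: the paper identifies it as the projection by $\Pi_W$ of the general orbit in the exceptional divisor $E_W$, whereas you check transitivity directly from the induced linear action on $\P(W^\vee)$, parametrizing the locus as $(\alpha:\beta:0:0:\lambda\alpha:\lambda\beta)$ and using the diagonal $\PGL(2,\C)$ on $(t_0,t_1),(t_2,t_3),(t_4,t_5)$ together with the torus of $B$ to move $(\alpha:\beta)$ and $\lambda$ arbitrarily — a more explicit and self-contained justification of the step the paper treats tersely.
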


\begin{proof}
Write $D_y$ and $D_z$ for the divisors in $X$ respectively determined by $y_1 = 0$ and $z_1 = 0$. The group action of $\text{Stab}(\ell)$ on $X$ determines the three orbits 
$$
\ell\ ,\ (D_y \cup D_z) \backslash \ell\ ,\ X \backslash (D_y \cup D_z)\ .
$$
The last two orbits are respectively transformed by $\zeta_W$ into $H_{23}\cap (H_{01}\cup H_{45})$ and $Y_W \backslash H_{23}$. Thus, we find two of the orbits in the statement. The orbit $H_{23}\backslash (H_{01}\cup H_{45})$ corresponds to the projection by $\Pi_W$ of the general orbit in $E_W$, when we let $\text{Stab}(\ell)$ act on $\blowup_W X$.
\end{proof}

The following is a classification theorem for the tri-linear birational maps of type $(1,2,2)$, $(2,1,2)$, and $(2,2,1)$. Equivalently, it provides the orbits of the right-action in $\Birr_{(1,2,2)}$, $\Birr_{(2,1,2)}$, and $\Birr_{(2,2,1)}$.

\begin{theorem}
\label{theorem: classification (1,2,2)}
Let $\phi$ be birational of type either $(1,2,2)$, $(2,1,2)$ or $(2,2,1)$. Then, $\phi$ belongs to the orbit of one of the following birational maps:
\begin{itemize}
    \item $\rho_1^{(1,2,2)} \equiv ( x_1 y_0 z_1 - x_0 y_1 z_1 : x_0 y_0 z_1 - x_0 y_1 z_1 : x_1 y_1 z_0 - x_0 y_1 z_1 : x_0 y_1 z_0 - x_0 y_1 z_1 )$ 
    \vskip0.1cm
    \noindent A surface in $W_{\rho_1}$ contains a line and two isolated points with all the coordinates different, i.e$.$ the projections onto the factors of $X$ are all different. The base ideal is
    $$
    (y_1, z_1)\cap (x_0, y_0, z_0) \cap (x_0 - x_1, y_0 - y_1, z_0 - z_1)\ .
    $$
    \item $\rho_2^{(1,2,2)} \equiv ( x_1 y_0 z_1: x_0 y_0 z_1 : x_1 y_1 z_0 - x_0 y_1 z_1 : x_0 y_1 z_0 + x_0 y_1 z_1 )$ 
    \vskip0.1cm
    \noindent A surface in $W_{\rho_2}$ contains a line and two isolated points with the same coordinate in exactly one of the factors of $X$. The base ideal is
    $$
    (y_1, z_1)\cap (x_0, y_0, z_0) \cap (x_0 + x_1, y_0, z_0 + z_1)\ .
    $$
    \item $\rho_3^{(1,2,2)} \equiv ( x_1 y_0 z_1 - x_0 y_1 z_1 : x_0 y_0 z_1 : x_1 y_1 z_0 - x_0 y_1 z_1 : x_0 y_1 z_0 )$ 
    \vskip0.1cm
    \noindent A surface in $W_{\rho_3}$ contains a line, and is tangent to a curve of tri-degree $(1,1,1)$ at an isolated point. The base ideal is
    $$
    (y_1, z_1) \cap (x_0 y_1 - x_1 y_0 , 
    x_0 z_1 - x_1 z_0 ,
    y_0 z_1 - y_1 z_0 ,
    x_0^2 , x_0 y_0 , x_0 z_0 , y_0^2 , y_0 z_0 , z_0 ^2)\ .
    $$
    \item $\rho_4^{(1,2,2)} \equiv ( x_1 y_0 z_1 - x_0 y_1 z_1 : x_0 y_0 z_1 : x_1 y_1 z_0 : x_0 y_1 z_0 )$ 
    \vskip0.1cm
    \noindent A surface in $W_{\rho_4}$ contains a line of tri-degree $(1,0,0)$ and is tangent to a curve of tri-degree $(1,1,0)$ at an isolated point. The base ideal is
    $$
    (y_1, z_1)
    \cap
    (z_0,x_0 y_1 - x_1 y_0 , x_0^2 , x_0 y_0 , y_0^2)\ .
    $$
    \item $\rho_5^{(1,2,2)} \equiv ( x_0 y_1 z_1 : x_1 y_0 z_1 : x_1 y_1 z_0 : x_0 y_1 z_0 - x_0 y_0 z_1 )$ 
    \vskip0.0cm
    \noindent A surface in $W_{\rho_5}$ contains a line of tri-degree $(1,0,0)$ and an isolated point, and is tangent to a curve of tri-degree $(0,1,1)$ at a point of the line. The base ideal is
    $$
    (y_1, z_1)\cap (x_0,y_0,z_0) \cap (x_1, y_0 z_1 - y_1 z_0, y_1^2, y_1 z_1 ,z_1^2 )\ .
    $$
    \item $\rho_6^{(1,2,2)} \equiv ( x_1 y_1 z_1 : x_0 y_1 z_1 : x_1 y_1 z_0 + x_1 y_0 z_1 : x_0 y_1 z_0 - x_0 y_0 z_1 - 2 x_1 y_0 z_1 )$ 
    \vskip0.0cm
    \noindent A surface in $W_{\rho_6}$ contains a line of tri-degree $(1,0,0)$ and is tangent to two curves of tri-degree $(0,1,1)$ at two distinct points of the line. The base ideal is
    $$
    (y_1, z_1) 
    \cap 
    (x_1, y_0 z_1 - y_1 z_0, y_1^2, y_1 z_1 ,z_1^2 )
     \cap (x_0 + x_1, y_0 z_1 + y_1 z_0, y_1^2, y_1 z_1 ,z_1^2 )
    \ .
    $$
    \item $\rho_7^{(1,2,2)} \equiv ( x_1 y_1 z_1 : x_0 y_1 z_1 : x_1 y_1 z_0 - x_1 y_0 z_1 : x_0 y_1 z_0 - x_0 y_0 z_1 + x_1 y_0 z_1 )$
    \vskip0.0cm
    \noindent A surface in $W_{\rho_7}$ contains a line and has contact to a surface of tri-degree $(0,1,1)$ at a point of the line. The base ideal is
    $$
    (y_1, z_1) \cap  ( x_1 y_1 z_0 - x_1 y_0 z_1 , x_0 y_1 z_0 - x_0 y_0 z_1 + x_1 y_0 z_1 , x_1^2 , y_1^2 , y_1 z_1 , z_1^2 )\ .
    $$
    \item $\rho_8^{(1,2,2)} \equiv ( x_1 y_1 z_1 : x_1 y_0 z_1 - x_0 y_1 z_1 : x_1 y_1 z_0 - x_0 y_1 z_1 : x_0 y_1 z_0 - x_0 y_0 z_1 - x_0 y_1 z_1 )$ 
    \vskip0.0cm
    \noindent A surface in $W_{\rho_8}$ contains a line and has contact to a surface of tri-degree $(0,1,1)$ at a point of the line. The base ideal is
    \begin{gather*}
    (y_1, z_1)\, \cap
    (y_1z_0 - y_0z_1 - y_1z_1, x_1^2 , z_1^3 , y_1z_1^2 , x_1z_1^2 , x_1z_0z_1 - x_0z_1^2 , \\ y_1^2 z_1, x_1y_1z_1, x_1y_0z_1 - x_0y_1z_1, y_1^3 , x_1y_1^2 , x_1y_0y_1 - x_0y_1^2 )
    \ .
    \end{gather*}
\end{itemize}
\end{theorem}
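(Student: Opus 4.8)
The plan is to follow the strategy used for Theorem \ref{theorem: classification (1,1,2)}, adapted to the fact that the projection $\pi_L$ in Diagram \eqref{eq: commutative diagram} is now centered at a line rather than a point. As recalled before the statement, by Lemma \ref{lemma: canonical CM curves} every birational map of type $(1,2,2)$, $(2,1,2)$, or $(2,2,1)$ lies, up to the right-action, in the orbit of a map whose Cohen-Macaulay base curve is the line $\ell$ of ideal $(y_1,z_1)$. By Proposition \ref{proposition: commutative diagram} we then have $N = 5$, the image $Y_W \subset \P_\C^5$ is a threefold of minimal degree $3$, and $\zeta_W$ is birational. Since $\deg Y_W = 3$ and $N - 4 = 1$, projection from a secant line $L \in \mathscr{S}(Y_W)$ restricts to a birational map $Y_W \dashrightarrow \P_\C^3$, so the orbit of $\phi$ in $\Birr_{(1,2,2)}$ is determined entirely by the orbit of $L$ under the induced action of $\text{Stab}(\ell)$ on $\mathscr{S}(Y_W)$.

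The key step is to promote the orbit structure on points of $Y_W$, given by Lemma \ref{lemma: (1,2,2) orbits on Y_W}, to an orbit structure on secant lines. A member $L$ of $\mathscr{S}(Y_W)$ meets $Y_W$ either in two distinct points or in a length-two subscheme supported at a single point (a tangent line). I would stratify the secant lines according to this distinction, together with the membership of the supporting points in the three point-orbits $Y_W\backslash H_{23}$, $H_{23}\backslash(H_{01}\cup H_{45})$, and $H_{23}\cap(H_{01}\cup H_{45})$. Secant lines disjoint from the exceptional locus $H_{23} = \Pi_W(E_W)$ yield base loci consisting of isolated points off $\ell$, while lines meeting $H_{23}$ force the projected base locus to acquire tangency or contact along $\ell$; this already matches the dichotomy in the geometric descriptions of $\rho_1^{(1,2,2)},\dots,\rho_8^{(1,2,2)}$. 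For each resulting orbit I would fix one explicit secant line, compose $\zeta_W$ with the corresponding projection, and then verify the stated base ideal and geometric description by a direct computation in \textsc{Macaulay2}, saturating and decomposing the base ideal into primary components and checking the tri-degrees together with the tangency and contact conditions recalled at the start of \S\ref{section: classification}.

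The hard part is the enumeration of the secant-line orbits, which is genuinely more delicate than the point-orbit enumeration needed for type $(1,1,2)$. Two subtleties stand out. First, among secant lines through two general points of $Y_W\backslash H_{23}$, a finer invariant is required to separate $\rho_1^{(1,2,2)}$ from $\rho_2^{(1,2,2)}$: the two resulting isolated base points may or may not share a coordinate in one of the $\P_\C^1$ factors of $X$, a distinction invisible to the coarse point-stratification and governed by the relative position of $L$ with respect to the family of linear subspaces contained in $Y_W$. Second, the tangent and contact cases $\rho_3^{(1,2,2)}$ through $\rho_8^{(1,2,2)}$ require careful control of the embedded tangent spaces to $Y_W$ and of the scheme structure that the projection imposes on the base locus, so that one correctly separates \emph{tangency to a curve} from \emph{contact to a surface}; pinning down exactly which tangent lines meeting $H_{23}$ produce each of $\rho_5^{(1,2,2)}$ through $\rho_8^{(1,2,2)}$ is where the bookkeeping is heaviest.
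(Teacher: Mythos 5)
Your proposal takes essentially the same route as the paper: reduce to the line $\ell=(y_1,z_1)$ via Lemma \ref{lemma: canonical CM curves}, factor through $\zeta_W$ with $Y_W\subset\P_\C^5$ of minimal degree $3$ (Proposition \ref{proposition: commutative diagram}), and classify the $\mathrm{Stab}(\ell)$-orbits of lines $L$ by refining the point-orbit stratification of Lemma \ref{lemma: (1,2,2) orbits on Y_W} with (i) the position of $L$ relative to the rulings of $Y_W$ — exactly the device the paper uses, via the surfaces $Y_y(P)$ and $Y_z(P)$, to separate $\rho_1^{(1,2,2)}$ from $\rho_2^{(1,2,2)}$ and $\rho_3^{(1,2,2)}$ from $\rho_4^{(1,2,2)}$ — and (ii) tangent-space data for the contact cases, with the base ideals verified in \textsc{Macaulay2}. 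The only ingredients of the paper's argument absent from your sketch are the bound $\deg(L\cap Y_W)\le 2$ (from $I_{Y_W}$ being generated by quadrics), which justifies restricting to honest secant or tangent lines, and the degenerate stratum $L\subset Y_W$, which must still be examined to confirm it only produces non-dominant maps or maps of other types.
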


\begin{proof}
From Diagram \eqref{eq: commutative diagram}, $\phi$ lies in the orbit of a birational map that factors as $\zeta_W\circ \pi_L$ for some line $L\subset \P_\C^5$ such that the restriction $\pi_L|_{Y_W}$ has degree one, i.e$.$ $\pi_L|_{Y_W}$ is birational. From Proposition \ref{proposition: commutative diagram}, we have $\deg Y_{ W } = 3$. 
If $L\not\subset Y_W$, then $\deg (L\cap Y_W) \geq 2$ as otherwise $\deg (\pi_L|_{Y_W}) > 1$. On the other hand, consider a linear parametrization $\P_\C^1\xrightarrow{} L$. As the ideal of $Y_W$ is generated by three quadratic forms, the pull-back of $L\cap Y_W$ yields three  quadratic forms in $\P_\C^1$, implying that $\deg(L\cap Y_W)\leq 2$. 
Then, we must have either $L\subset Y_W$ or $L\not\subset Y_W$ and $\deg(L\cap Y_W) = 2$.
\vskip0.2cm
Given a point $P\in Y_W\backslash H_{23}$, we find $Q = (\alpha_0:\alpha_1)\times (\beta:1)\times(\gamma:1) \in X$ such that $\zeta_W(Q) = P$. Write $Y_y(P) \subset Y_W$ for the closure of the image of the restriction of $\zeta_W$ to $\P_\C^1\times (\beta : 1) \times \P_\C^1$. Analogously, define $Y_z(P) \subset Y_W$ by restricting $\zeta_W$ to $\P_\C^1\times \P_\C^1 \times (\gamma : 1)$. Notice that the restriction of $\zeta_W$ to $(\alpha_0:\alpha_1)\times \P_\C^1 \times \P_\C^1$ is a projective plane. Both $Y_y(P)$ and $Y_z(P)$ are smooth subvarieties of $Y_W$. Given a subvariety $V\subset Y_W$ and a smooth point $P\in V$, we write $T_P (V)$ for the tangent space of $V$ at $P$. Now, we discuss all the possible intersections between $L$ and $Y_W$. In the first place, we assume that $L\not\subset Y_W$:
\begin{itemize}
    \item If $L$ meets $Y_W$ transversally at $P_1,P_2 \in Y_W \backslash H_{23}$, and moreover we have   $P_2\not\in Y_y(P_1)$ and $P_2\not\in Y_z(P_1)$, $\phi$ lies in the orbit of $\rho_1^{(1,2,2)}$
    
    \item If $L$ meets $Y_W$ transversally at $P_1,P_2 \in Y_W \backslash H_{23}$, and either $P_2\in Y_y(P_1)$ or $P_2 \in Y_z(P_1)$, $\phi$ lies in the orbit of $\rho_2^{(1,2,2)}$
    
    \item If $L\subset T_P(Y_W)$ at a point $P \in Y_W \backslash H_{23}$, and moreover we have $L\not\subset T_P (Y_y(P))$ and $L\not\subset T_P (Y_z(P))$,  
    $\phi$ lies in the orbit of $\rho_3^{(1,2,2)}$
    
    \item If $L\subset T_P(Y_W)$ at a point $P \in Y_W \backslash H_{23}$, and either $L\subset T_P (Y_y(P))$ or $L\subset T_P (Y_z(P))$, 
    
    $\phi$ lies in the orbit of $\rho_4^{(1,2,2)}$
    \item If $L$ meets $Y_W$ transversally at $P_1 \in Y_W \backslash H_{23}$, $P_2 \in H_{23} \backslash (H_{01}\cup H_{45})$, $\phi$ lies in the orbit of $\rho_5^{(1,2,2)}$
    
    \item If $L$ meets $Y_W$ transversally at $P_1,P_2 \in H_{23} \backslash (H_{01}\cup H_{45})$, $\phi$ lies in the orbit of $\rho_6^{(1,2,2)}$
    
    \item If $L \subset T_P (Y_W)$ at a point $P \in H_{23} \backslash (H_{01} \cup H_{45})$ and $L\subset T_P(H_{23})$, $\phi$ lies in the orbit of $\rho_7^{(1,2,2)}$
    
    \item If $L \subset T_P (Y_W)$ at a point $P \in H_{23} \backslash (H_{01} \cup H_{45})$ and $L\not\subset T_P(H_{23})$, $\phi$ lies in the orbit of $\rho_8^{(1,2,2)}$
    
    \item If $L$ meets $Y_W$ transversally at $P_1 \in Y_W \backslash H_{23}$, $P_2 \in H_{23} \cap (H_{01}\cup H_{45})$, $\phi$ lies in the orbit of $\rho_3^{(1,1,2)}$
    
    \item If $L$ meets $Y_W$ transversally at $P_1 \in H_{23} \backslash (H_{01} \cup H_{45})$, $P_2 \in H_{23} \cap (H_{01}\cup H_{45})$, $\phi$ lies in the orbit of $\rho_4^{(1,1,2)}$
    
    \item If $L\subset T_P(H_{23})$ at a point $P \in H_{23} \cap (H_{01} \cup H_{45})$, $\phi$ lies in the orbit of $\rho_5^{(1,1,2)}$
\end{itemize}
Secondly, we study the cases where $L\subset Y_W$:
\begin{itemize}
    \item If $L \subset Y_W \backslash H_{23}$, $\phi$ is not dominant
    
    \item If $L \subset Y_W$ and intersects transversally $H_{23} \backslash (H_{01} \cup H_{45})$, then $\phi$ lies in the orbit of $\rho_2^{(1,1,1)}$
    
    \item If $L \subset Y_W$ and intersects transversally $H_{23} \cap (H_{01}\cup H_{45})$, $\phi$ lies in the orbit of $\rho_3^{(1,1,1)}$
    
    \item If $L \subset H_{23} \backslash (H_{01} \cup H_{45})$, and $L\not\subset H_{23} \cap (H_{01} \cup H_{45})$, $\phi$ lies in the orbit of $\rho_4^{(1,1,1)}$
\end{itemize}
\end{proof}

\subsection{Orbits of birational maps of type $(2,2,2)$}
\label{subsection: (2,2,2) orbits}

Given a point $Q\in X$ and $\lambda,\mu,\nu\in\C^*$, by means of an automorphism of $X$ the ideal $I_{Q(\lambda,\mu,\nu)}$ can be transformed into 
$$
I'
=
I_{Q'(1,1,1)}
=
(
x_1 y_0 z_0 + x_0 y_1 z_0 + x_0 y_0 z_1
\, , \,
x_0 y_1 z_1 
\, , \,
x_1 y_0 z_1 
\, , \,
x_1 y_1 z_0 
\, , \,
x_1 y_1 z_1
)
\ ,
$$
where $Q' = (1:0)^3\in X$. In particular, any birational map of type $(2,2,2)$ lies in the orbit of a birational map with the non-reduced point defined by $I_{Q'(1,1,1)}$ in its base locus.
\vskip0.2cm
Let $\text{Stab}(Q'(1,1,1)) \leq \Aut(X)$ be the stabilizer of the non-reduced point  defined by $I'$. 
In order to study the orbits of the right-action in $\Birr_{(2,2,2)}$, we can compute the orbits of the group action given by the stabilizer $\text{Stab}(Q'(1,1,1))$ acting on $Y_W$, where we set $W = {I'}_{(1,1,1)}$. 
The birational map $\zeta_W : X \dashrightarrow Y_W \subset \P_\C^5$ in Diagram \eqref{eq: commutative diagram} is now 
$$
(x_0:x_1)\times (y_0:y_1)\times (z_0:z_1) \mapsto (
x_1 y_0 z_0 + x_0 y_1 z_0 + x_0 y_0 z_1 :
x_0 y_1 z_1 :
x_1 y_0 z_1 :
x_1 y_1 z_0 :
x_1 y_1 z_1
)\ ,
$$
and the defining equation of $Y_W$ in $\P_\C^4$ is $t_1 t_2 + t_1 t_3 + t_2 t_3 - t_0 t_4 = 0$. 
On the other hand, $\Pi_W(E_W)$ is the hyperplane section in $Y_W$ given by $t_4 = 0$, and we write $H_4 = \Pi_W(E_W)$. The divisor $H_4$ is again a two-dimensional cone, of vertex $O =$  ($1\,$:$\,0\,$:$\,0\,$:$\,0\,$:$\,0$). Similarly, we define $r_{12}$, $r_{13}$ and $r_{23}$ to be the projective lines in $Y_W$ defined by $t_1 = t_2 = t_4 = 0$, $t_1 = t_3 = t_4 = 0$, and $t_2 = t_3 = t_4 = 0$, respectively.

\begin{lemma}
\label{lemma: (2,2,2) orbits on Y_W}
The group action of $\text{Stab}(Q'(1,1,1))$ on $Y_W$ determines the four orbits: 
$$
Y_W \backslash H_{4}\ ,\ H_4 \backslash (r_{12} \cup r_{13} \cup r_{23})\ ,\ (r_{12} \cup r_{13} \cup r_{23}) \backslash O\ ,\  O
$$
\end{lemma}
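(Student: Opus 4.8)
The plan is to follow the template established in the proofs of Lemmas \ref{lemma: (1,1,2) orbits in Y_W irreducible}, \ref{lemma: (1,1,2) orbits in Y_W reducible}, and \ref{lemma: (1,2,2) orbits on Y_W}: first determine $\text{Stab}(Q'(1,1,1))$ and its orbits on $X$, and then transport this decomposition to $Y_W$ through the equivariant resolution $\Pi_W\colon\blowup_W X\to Y_W$ of $\zeta_W$. Throughout I write $D_x$, $D_y$, $D_z$ for the divisors of $X$ cut out by $x_1=0$, $y_1=0$, $z_1=0$, all passing through $Q'=(1:0)^3$.

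First I would compute the stabilizer explicitly. An automorphism in $H\cong\PGL(2,\C)^3$ preserves the square part $(x_1,y_1,z_1)^2$ of $I'$ (see Setup \ref{setup: I_phi}) only if each factor fixes $(1:0)$, i.e. is upper triangular; after normalizing, $\eta_i=\left(\begin{smallmatrix}1 & b_i\\ 0 & \kappa_i\end{smallmatrix}\right)$. Reducing the distinguished generator $g=x_1y_0z_0+x_0y_1z_0+x_0y_0z_1$ modulo $(x_1,y_1,z_1)^2$ and imposing $\eta^{*}g\propto g$ forces $\kappa_1=\kappa_2=\kappa_3=:\kappa$, so the identity component $G_0$ is the $4$-dimensional group of such triples with a common diagonal ratio $\kappa\in\C^{*}$ and arbitrary $b_i\in\C$. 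Since $g$ and the square part are symmetric under permuting the three factors, $\text{Stab}(Q'(1,1,1))=\mathfrak{S}_3\ltimes G_0$. Working in the chart $x_1,y_1,z_1\ne 0$ with coordinates $u=x_0/x_1$, $v=y_0/y_1$, $w=z_0/z_1$, the action reads $u\mapsto(u+b_1)/\kappa$ (and similarly in $v,w$), from which transitivity on each stratum is immediate. This yields exactly four orbits on $X$:
$$
Q',\quad \big((D_x\cap D_y)\cup(D_x\cap D_z)\cup(D_y\cap D_z)\big)\setminus Q',\quad (D_x\cup D_y\cup D_z)\setminus(\text{pairwise intersections}),\quad X\setminus(D_x\cup D_y\cup D_z),
$$
the $\mathfrak{S}_3$ factor merging the three divisors into one orbit and the three coordinate lines into another.

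Next I would push these orbits to $Y_W$. Since $t_4=x_1y_1z_1$, the generic orbit lands in $\{t_4\ne 0\}$; as $\zeta_W$ is birational onto $Y_W$ by Proposition \ref{proposition: commutative diagram}, its image is precisely $Y_W\setminus H_4$. A direct substitution shows $\zeta_W$ contracts $D_x$, $D_y$, $D_z$ onto the rulings $r_{23}$, $r_{13}$, $r_{12}$ of the cone $H_4$ (on $D_x$ one gets $(t_0:t_1:0:0:0)$ with $t_0/t_1=v+w$), so the divisor orbit maps onto $(r_{12}\cup r_{13}\cup r_{23})\setminus O$; and it contracts each pairwise intersection to the vertex $O=(1:0:0:0:0)$, so the one-dimensional orbit maps to the single point $O$. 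The remaining set $H_4\setminus(r_{12}\cup r_{13}\cup r_{23})$ does not meet the image of $\zeta_W$ off the base locus; it is recovered as the image under $\Pi_W$ of the general $\text{Stab}(Q'(1,1,1))$-orbit on the exceptional divisor $E_W$ over $Q'$, exactly as in the previous lemmas. As these four sets partition $Y_W$, the claim follows.

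The hard part will be the last step: making rigorous that the general orbit of $E_W$ projects onto all of $H_4\setminus(r_{12}\cup r_{13}\cup r_{23})$ and is a single orbit. This requires controlling the $\text{Stab}(Q'(1,1,1))$-action on the exceptional divisor of the blow-up of the \emph{non-reduced} point $Z_\phi$, which is more delicate than the reduced-center cases handled earlier; the dimension count $\dim E_W=2=\dim H_4$ together with the equivariance of $\Pi_W$ makes the identification plausible, and I would confirm the contraction behaviour of the strict transforms of $D_x,D_y,D_z$ (hitting only the three special rulings) via a \textsc{Macaulay2} computation of the blow-up, in line with the ``direct computation'' remarks used elsewhere in the paper.
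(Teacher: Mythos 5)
Your proposal follows essentially the same route as the paper's proof: the paper likewise lists the four orbits $Q'$, $(D_{xy}\cup D_{xz}\cup D_{yz})\setminus Q'$, $(D_x\cup D_y\cup D_z)\setminus(D_{xy}\cup D_{xz}\cup D_{yz})$, $X\setminus(D_x\cup D_y\cup D_z)$ of $\text{Stab}(Q'(1,1,1))$ on $X$, pushes the last three forward by $\zeta_W$ to $O$, $(r_{12}\cup r_{13}\cup r_{23})\setminus O$, and $Y_W\setminus H_4$, and obtains $H_4\setminus(r_{12}\cup r_{13}\cup r_{23})$ as the projection by $\Pi_W$ of the general orbit in $E_W$. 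Your explicit computation of the stabilizer as $\mathfrak{S}_3\ltimes G_0$ and of the contractions $D_x\to r_{23}$, etc., is correct and only adds detail the paper leaves implicit; the exceptional-divisor step you flag as delicate is asserted with the same brevity in the paper itself.
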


\begin{proof}
Write $D_x$, $D_y$, and $D_z$ for the divisors in $X$ defined respectively by $x_1 = 0$, $y_1 = 0$, and $z_1 = 0$. Similarly, write $D_{xy}$, $D_{xz}$ and $D_{yz}$ for their intersections, with the obvious notation. The group action of $\text{Stab}(Q'(1,1,1))$ on $X$ determines the four orbits 
$$
Q'\ ,\ (D_{xy}\cup D_{xz}\cup D_{yz})\backslash Q'\ ,\ (D_{x}\cup D_{y}\cup D_{z})\backslash (D_{xy}\cup D_{xz}\cup D_{yz})\ ,\ X \backslash (D_{x}\cup D_{y}\cup D_{z})\ .
$$
The last three orbits are respectively transformed by $\zeta_W$ into $O$, $(r_{12} \cup r_{13} \cup r_{23}) \backslash O$, and $Y_W \backslash H_{4}$. Therefore, we find three of the orbits in the statement. The orbit $H_4 \backslash (r_{12} \cup r_{13} \cup r_{23})$  corresponds to the projection by $\Pi_W$ of the general orbit in $E_W$, when we let $\text{Stab}(Q(1,1,1))$ act on $\blowup_W X$.
\end{proof}

The following is a classification theorem for the tri-linear birational maps of type $(2,2,2)$. Equivalently, it provides the orbits of the right-action in $\Birr_{(2,2,2)}$.

\begin{theorem}
\label{theorem: classification (2,2,2)}
Let $\phi$ be birational of type $(2,2,2)$. Then, $\phi$ belongs to the orbit of one of the following birational maps:
\begin{itemize}
    \item $\rho_1^{(2,2,2)} \equiv ( x_0 y_1 z_1 : x_1 y_0 z_1 : x_1 y_1 z_0 : x_1 y_0 z_0 + x_0 y_1 z_0 + x_0 y_0 z_1 )$ 
    \vskip0.0cm
    \noindent A surface in $W_{\rho_1}$ contains a point and has contact to a surface of tri-degree $(1,1,1)$ at a distinct point. The base ideal is
    $$
    (x_0, y_0, z_0)\cap (x_1 y_0 z_0 + x_0 y_1 z_0 + x_0 y_0 z_1 \, ,\, x_1^2\, ,\, x_1 y_1\, ,\, x_1 z_1\, ,\, y_1^2\, ,\, y_1 z_1\, ,\, z_1^2 )\ .
    $$
    \item $\rho_2^{(2,2,2)} \equiv 
        ( x_1 y_1 z_1 : x_1 y_0 z_1 - x_0 y_1 z_1 : 2x_1 y_1 z_0 + x_0 y_1 z_1 : x_1 y_0 z_0 + x_0 y_1 z_0 + x_0 y_0 z_1)$ 
    \vskip0.0cm
    \noindent A surface in $W_{\rho_2}$ has contact to a surface of tri-degree $(1,1,1)$ at a point. The base ideal is
    \begin{gather*}
    (
    z_1^3\, ,\, 
y_1 z_1^2\, ,\, 
x_1 z_1^2\, ,\, 
2 y_1 z_0 z_1 + y_0 z_1^2\, ,\, 
2 x_1 z_0 z_1 + x_0 z_1^2\, ,\,
y_1^2 z_1\, ,\, 
x_1 y_1 z_1\, ,\, 
x_1 y_0 z_1 - x_0 y_1 z_1\, ,\, \\
x_1^2 z_1\, ,\, 
2 y_1^2 z_0 + y_0 y_1 z_1\, ,\, 
2 x_1y_1 z_0 + x_0 y_1 z_1\, ,\,
x_1 y_0 z_0 + x_0 y_1 z_0 + x_0 y_0 z_1\, ,\, \\
2 x_1^2 z_0 + x_0 x_1 z_1\, ,\, 
y_1^3\, ,\, 
x_1 y_1^2 \, , \, 
x_1 y_0 y_1 -x_0 y_1^2\, ,\, 
x_1^2 y_1\, ,\, 
x_1^2 y_0 - x_0 x_1 y_1 \, ,\, 
x_1^3
    )\ .
\end{gather*}
\end{itemize}
\end{theorem}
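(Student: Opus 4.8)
The plan is to follow the factorization strategy of Diagram \eqref{eq: commutative diagram}, exactly as in the proofs of Theorems \ref{theorem: classification (1,1,2)} and \ref{theorem: classification (1,2,2)}. Since $\phi$ has type $(2,2,2)$, by Lemma \ref{lemma: condition on 0dim base locus} and Setup \ref{setup: I_phi} I may apply an automorphism of $X$ so that $W_\phi = W = {I'}_{(1,1,1)}$, whence $\phi$ lies in the orbit of a composition $\zeta_W \circ \pi_L$. By Proposition \ref{proposition: commutative diagram} we have $N = 4$ and $Y_W \subset \P_\C^4$ is a quadric of minimal degree; a direct check of the gradient of $t_1t_2 + t_1t_3 + t_2t_3 - t_0t_4$ shows this quadric threefold is smooth, so $\pi_L$ is the linear projection from a point $L \in Y_W$ and its restriction to $Y_W$ is birational for every such $L$. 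Consequently the orbit of $\phi$ is governed by the $\text{Stab}(Q'(1,1,1))$-orbit of the center $L$, and by Lemma \ref{lemma: (2,2,2) orbits on Y_W} there are exactly four possibilities.

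Next I would dispose of the two degenerate orbits. The base scheme of $\zeta_W \circ \pi_L$ is the union of the base scheme of $\zeta_W$, supported at $Q'$, with the fibre $\zeta_W^{-1}(L)$. When $L$ lies on one of the lines $r_{12}, r_{13}, r_{23}$ or at the vertex $O$, I would show that $\zeta_W^{-1}(L)$ is one-dimensional: for instance at $L = O$ the equations $t_1 = t_2 = t_3 = t_4 = 0$ contain the curve $\{x_1 = y_1 = 0\}$, and the $\text{Stab}$-action propagates this to the whole of the orbits $(r_{12}\cup r_{13}\cup r_{23})\backslash O$ and $O$. Hence in these cases $Z_\phi$ is positive-dimensional, contradicting the hypothesis that $\phi$ has type $(2,2,2)$; these two orbits therefore do not occur. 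There remain only the orbits $Y_W \backslash H_4$ and $H_4 \backslash (r_{12}\cup r_{13}\cup r_{23})$.

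For the first surviving orbit, $L \in Y_W \backslash H_4$, the fibre $\zeta_W^{-1}(L)$ is a single reduced point of $X$ distinct from $Q'$ (since $L \notin \Pi_W(E_W) = H_4$), so $Z_\phi$ is the disjoint union of the fat point at $Q'$ and this reduced point; choosing an explicit such $L$ and composing yields $\rho_1^{(2,2,2)}$, with the reduced component $(x_0,y_0,z_0)$ separated from the contact component. For the second, $L \in H_4 \backslash (r_{12}\cup r_{13}\cup r_{23})$, the fibre is supported at $Q'$ and merely thickens the fat point, producing the single fat point of $\rho_2^{(2,2,2)}$. In both cases birationality and type $(2,2,2)$ are automatic from Diagram \eqref{eq: commutative diagram} together with the zero-dimensionality of $Z_\phi$, and $\rho_1 \not\sim \rho_2$ because their base loci are non-isomorphic schemes. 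The stated base ideals and the ``contact to a $(1,1,1)$ surface'' descriptions would be obtained by a primary decomposition of $B_{\rho_i}$, the contact component being exactly the ideal $I'$ of Setup \ref{setup: I_phi} as recorded in the Remark following Lemma \ref{lemma: condition on 0dim base locus}; these computations I would carry out in \textsc{Macaulay2}.

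The main obstacle I expect is pinning down the precise scheme structure of the base locus in the second orbit. Whereas for $\rho_1$ the two components of $Z_\phi$ are cleanly separated, for $\rho_2$ the fibre $\zeta_W^{-1}(L)$ collapses onto $Q'$ and one must verify that the resulting thickening is exactly the lengthy fat-point ideal in the statement, rather than a smaller or larger scheme. Confirming that this is a single primary component and matching its generators to the explicit list is the delicate step, and is where the computer-algebra verification is essential; by contrast the exclusion of the orbits on $r_{12}\cup r_{13}\cup r_{23}$ and at $O$, and the identification of $\rho_1$, are comparatively routine.
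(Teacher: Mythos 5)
Your proposal is correct and follows essentially the same route as the paper: reduce to $W = {I'}_{(1,1,1)}$ via Lemma \ref{lemma: condition on 0dim base locus} and Setup \ref{setup: I_phi}, factor $\phi$ as $\zeta_W\circ\pi_L$ through Diagram \eqref{eq: commutative diagram} and Proposition \ref{proposition: commutative diagram}, and run through the four orbits of Lemma \ref{lemma: (2,2,2) orbits on Y_W}, checking the base ideals by computer algebra. The only (harmless) difference is that where you exclude the orbits $(r_{12}\cup r_{13}\cup r_{23})\backslash O$ and $O$ by showing $\zeta_W^{-1}(L)$ is positive-dimensional, the paper instead identifies those cases with the representatives $\rho_2^{(1,1,2)}$ and $\rho_4^{(1,1,1)}$ of other types, which amounts to the same exclusion.
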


\begin{proof}
From Diagram \eqref{eq: commutative diagram}, $\phi$ lies in the orbit of a birational map that factors as $\zeta_W\circ \pi_L$ for some point $L\in Y_W$. We have the following four possibilities:
\begin{itemize}
    \item If $L\in Y_W\backslash H_4$, $\phi$ lies in the orbit of $\rho_1^{(2,2,2)}$
    \item If $L\in H_4\backslash (r_{12} \cup r_{13} \cup r_{23})$, $\phi$ lies in the orbit of $\rho_2^{(2,2,2)}$
    \item If $L\in (r_{12} \cup r_{13} \cup r_{23}) \backslash O$, $\phi$ lies in the orbit of $\rho_2^{(1,1,2)}$
    \item If $L = O$, $\phi$ lies in the orbit of $\rho_4^{(1,1,1)}$
\end{itemize}
\end{proof}

Altogether, Theorems \ref{theorem: classification (1,1,1)}, \ref{theorem:  classification (1,1,2)}, \ref{theorem:  classification (1,2,2)}, and \ref{theorem:  classification (2,2,2)} provide the complete list of the orbits of the right-action in $\Bir_{(1,1,1)}$. Interestingly, we find the following corollary from this classification. 

\begin{corollary}
\label{corollary: unirationality}
All the irreducible components of $\Bir_{(1,1,1)}$ are unirational.
\end{corollary}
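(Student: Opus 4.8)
The plan is to deduce unirationality from the finiteness of the orbit classification obtained in §\ref{section: classification}, together with the rationality of the group $H = \PGL(2,\C)^3$. The key observation is that each irreducible component, being a finite union of orbits of a connected rational group, must be the closure of a single (top-dimensional) orbit, and the orbit map from $H$ then realizes the component as the image of a rational variety.

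Concretely, I would proceed as follows. First, I would record that Theorems \ref{theorem: classification (1,1,1)}, \ref{theorem: classification (1,1,2)}, \ref{theorem: classification (1,2,2)}, and \ref{theorem: classification (2,2,2)} together show that the right-action of $\Aut(X)$ on $\Bir_{(1,1,1)}$ has only finitely many orbits (the $19$ listed). Since $\Aut(X) \cong \mathfrak{S}_3 \rtimes H$ with $H = \PGL(2,\C)^3$ and $\mathfrak{S}_3$ finite, each $\Aut(X)$-orbit is a finite union of $H$-orbits, so the action of $H$ alone also has finitely many orbits. Second, I would fix an irreducible component $\mathcal{C} = \Birr_{(r_1,r_2,r_3)}$; because $H$ does not permute the factors of $X$ it preserves the tri-degree of $Z_\phi$, hence the type, so $\mathcal{C}$ is a union of the finitely many $H$-orbits it meets. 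Each such orbit is irreducible and locally closed (it is the orbit of a point under a connected algebraic group), and an irreducible variety cannot be written as a finite union of proper closed subsets; hence $\mathcal{C}$ is the closure of exactly one of these orbits, say $\mathcal{C} = \overline{H \cdot W_\rho}$. Third, the orbit map $H \to \mathcal{C}$, $\eta \mapsto \eta \cdot W_\rho$, is then dominant. Finally, since $\PGL(2,\C)$ is an open subvariety of $\P_\C^3$ and hence rational, $H = \PGL(2,\C)^3$ is rational; composing a birational parametrization $\P_\C^9 \dashrightarrow H$ with the orbit map gives a dominant rational map $\P_\C^9 \dashrightarrow \mathcal{C}$, which proves that $\mathcal{C}$ is unirational.

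The bulk of the work is already contained in the classification of §\ref{section: classification}, so the corollary itself is a short formal deduction and I do not expect a serious obstacle. The one point requiring care is confirming that each component is genuinely exhausted by the listed orbits and that $H$ stabilizes it, i.e. that the type is an $H$-invariant; this is immediate from the fact that $H$ preserves tri-degrees and that the type is read off from $\text{tri-deg}(Z_\phi)$ via \textsc{Table} \ref{table: possible types of inverse}. As an alternative route, one could instead invoke the fibration structures built in the proof of Theorem \ref{theorem: algebraic structure of Bir(1,1,1)}: each component is birational to a total space $\mathscr{V}$ fibered over a rational base (such as $\Cc_{(1,1,0)} \cong \P_\C^3 \times \P_\C^1$ or $X \times (\C^*)^2$) with fibers open in the secant varieties $\mathscr{S}(Y_W)$ of the minimal-degree varieties $Y_W$. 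Proving unirationality this way would instead require checking that these secant-variety fibrations have rational (or at least unirational) total spaces, which is the more computational route; the orbit argument avoids this and is the approach I would favor.
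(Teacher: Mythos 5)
Your argument is correct and is essentially the paper's own proof: the paper likewise takes a representative of the general orbit of each component, notes that the $H$-action on it gives a dominant rational map $H \to V$, and concludes from the rationality of $H \cong \PGL(2,\C)^3$. You merely spell out the (correct) justification that finiteness of the orbit set forces each irreducible component to be the closure of a single $H$-orbit, which the paper leaves implicit in the phrase ``general orbit of $V$.''
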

\begin{proof}
Given an irreducible component $V$ of $\Bir_{(1,1,1)}$, the action of the subgroup $H \trianglelefteq \Aut(X)$ consisting of the automorphisms that preserve the tri-degrees in $X$ on any representative of the general orbit of $V$, as listed in either Theorem  \ref{theorem: classification (1,1,1)}, \ref{theorem:  classification (1,1,2)}, \ref{theorem:  classification (1,2,2)}, or \ref{theorem:  classification (2,2,2)}, yields a dominant rational map from $H$ to $V$. As we find a dense affine set in $H\cong \PGL(2,\C)^3$, the result follows.
\end{proof}

\subsection{Degenerations of the base loci} Let $O_1,O_2 \subset \Bir_{(1,1,1)}$ be two orbits of the right-action. We say that the orbit $O_1$ \textit{degenerates to} $O_2$ if the Zariski closure of $O_1$ in $\Bir_{(1,1,1)}$ contains $O_2$. 
\vskip0.2cm
From the proofs of Theorems \ref{theorem: classification (1,1,1)}, \ref{theorem: classification (1,1,2)}, \ref{theorem: classification (1,2,2)}, and \ref{theorem: classification (2,2,2)} we deduce most of the degenerations of the orbits. \textsc{Diagram} \ref{diagram: degenerations} depicts geometric illustrations of all the degenerations of the possible base loci of a tri-linear birational map. The symbols $\textcolor{red}{\bullet}$ and $\textcolor{red}{-}$ respectively represent non-singular points and curves contained in any surface of the linear system, and $\textcolor{red}{\circ}$ represents a point of contact to a surface. Similarly, an arrow $\textcolor{red}{\rightarrow}$ represents a tangency at a point to a curve of tri-degree $(1,1,1)$, and a dashed arrow $\textcolor{red}{\dasharrow}$ a tangency to a curve of tri-degree $(1,1,0)$ or permutation.
\begin{figure}[h]
    \includegraphics[scale = 0.24]{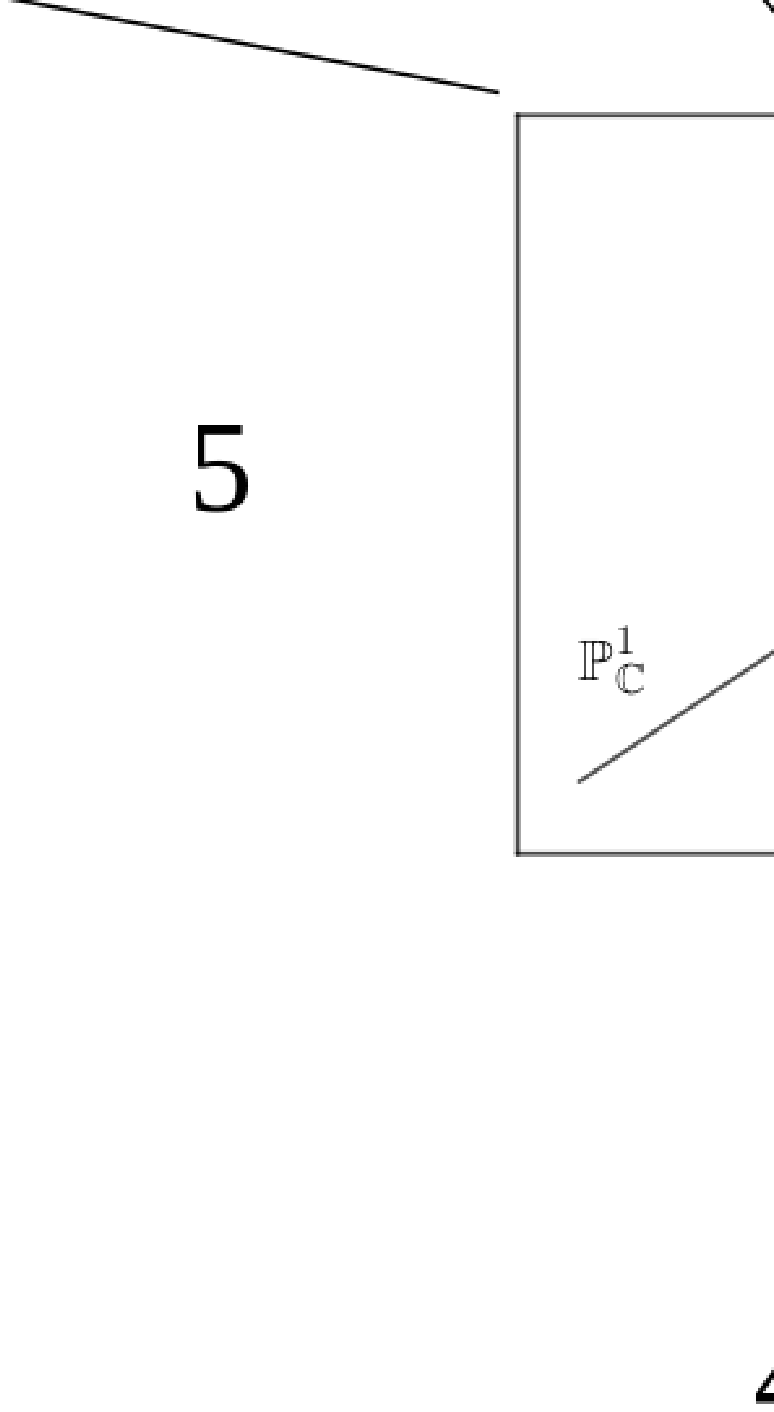}
    \captionsetup{width=\linewidth}
    \caption{Degenerations of the base loci of tri-linear birational maps. The dimension of the associated orbit of the right-action in $\Bir_{(1,1,1)}$ is written on the left. }
    \label{diagram: degenerations}
\end{figure}
\vskip0.0cm
The degenerations of the zero-dimensional base loci require further analysis. Namely, we can derive the degenerations to base loci of tri-degree $(1,0,0)$ (resp$.$ $(0,1,0)$, $(0,0,1)$) by studying the orbits in the closure $Y'$ of the image of
\begin{eqnarray*}
	\zeta': X & \dasharrow & Y' \subset \P_\C^4 \\ \nonumber
	(x_0:x_1)\times(y_0:y_1)\times(z_0:z_1) & \mapsto & 
	( x_1 y_1 z_1:
x_0 y_1 z_1 :
x_1 y_0 z_1 :
x_1 y_1 z_0 :
x_0 y_1 z_0 + x_0 y_0 z_1)
\ .
\end{eqnarray*}
In this case, the entries of $\zeta'$ determine a basis for the graded component of the ideal
$$
I' = (x_0 y_1 z_0 + x_0 y_0 z_1) + (x_1,y_1,z_1)^2
$$
in tri-degree $(1,1,1)$. Similarly, the degenerations to base loci of tri-degree $(1,1,0)$ (resp$.$ $(1,0,1)$, $(0,1,1)$) can be deduced from the orbits in the closure $Y''$ of the image of
\begin{eqnarray*}
	\zeta'': X & \dasharrow & Y'' \subset \P_\C^4 \\ \nonumber
	(x_0:x_1)\times(y_0:y_1)\times(z_0:z_1) & \mapsto & 
	(x_1 y_1 z_1 :
x_0 y_1 z_1 :
x_1 y_0 z_1 :
x_1 y_1 z_0 :
x_0 y_0 z_1)\ ,
\end{eqnarray*}
where the entries of $\zeta''$ now determine a basis of the graded component of the ideal 
$$
I'' = (x_0y_0z_1) + (x_1,y_1,z_1)^2
$$
in tri-degree $(1,1,1)$. 
As an example, the orbits on $Y'$ of the action given by the stabilizer of the subscheme defined by $I'$ are
\begin{equation}
\label{eq: orbits in degenerations}
Y' \backslash H_4 \ ,\ (H_1\cap H_4) \backslash H_{2-3} \ ,\ (H_{2-3}\cap H_4) \backslash (H_1\cup H_2) \ ,\ r \backslash H_1 \ ,\ H_1 \cap H_{2-3} \cap H_4\ ,\ O\ ,
\end{equation}
where $H_1,H_{2-3}, H_4$ are the divisors in $Y'$ respectively given by $t_1 = 0$, $t_2 - t_3 = 0$, and $t_4 = 0$, and $r \subset Y'$ is the line determined by $t_2 = t_3 = t_4 = 0$. Representatives associated to each orbit in \eqref{eq: orbits in degenerations}, in the same order, are
$$
\rho_{5}^{(1,2,2)} \ ,\  \rho_{6}^{(1,2,2)}\ ,\
\rho_{8}^{(1,2,2)}\ ,\
\rho_{2}^{(1,1,1)}\ ,\
\rho_{7}^{(1,2,2)}\ ,\
\rho_{4}^{(1,1,1)}.
$$
As $r \subset H_{2-3}\cap H_4$ but $r \not \subset H_1\cap H_4$, it follows that $\rho_{8}^{(1,2,2)}$ degenerates to $\rho_{2}^{(1,1,1)}$, but $\rho_{6}^{(1,2,2)}$ does not.

\section{Syzygies of tri-linear birational maps} 
\label{section: syzygy-based characterization}

Finally, we study the syzygies of the entries of tri-linear birational maps. Interestingly, the following theorem yields a birationality criterion for tri-linear rational maps, based on the computation of the first syzygies in some tri-degrees. Furthermore, novel methods for the manipulation of birational volumes might be devised from this theorem, relying on the imposition of specific syzygies. In order to simplify the statement, we set
$$
e_1 = (1,0,0)
\ ,\ 
e_2 = (0,1,0)
\ ,\ 
e_3 = (0,0,1)
\ .
$$
\begin{theorem}
\label{theorem: main computational result}
Assume that $\phi$ is dominant, and let $i,j,k$ be indices such that $\{ i,j,k \} = \{ 1,2,3 \}$. Then, $\phi$ is birational if and only if one of the following conditions holds:
\begin{enumerate}
\setlength\itemsep{0.5em}
\item The $f_i$'s have syzygies of tri-degrees $e_1$, $e_2$, and $e_3$. In this case, the type of $\phi$ is $(1,1,1)$.
\item The $f_i$'s have syzygies of tri-degree $e_i$ and $e_j$, but not $e_k$. In this case, the type of $\phi$ is $(1,1,1) + e_k$.
\item The $f_i$'s have a syzygy of tri-degree $e_i$, but neither $e_j$ nor $e_k$. Moreover, the $f_i$'s also have a syzygy of tri-degree either $e_i + e_j$ or $e_i + e_k$, independent from the first one. In this case, the type of $\phi$ is $(1,1,1)+e_j + e_k$.
\item The $f_i$'s do not have syzygies of tri-degree $e_1$, $e_2$, or $e_3$. Moreover, the $f_i$'s have two independent syzygies in each of the tri-degrees $e_1 + e_2$, $e_1+e_3$, and $e_2+e_3$ satisfying the splitting property in Condition \ref{condition: (2,2,2)} (see §\ref{subsection: syzygies (2,2,2)}). In this case, the type of $\phi$ is $(2,2,2)$. 
\end{enumerate}
\end{theorem}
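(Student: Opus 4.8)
The plan is to prove the equivalence by matching each of the four syzygy conditions to one of the four types in \textsc{Table} \ref{table: possible types of inverse}, using as the central bridge the observation that a syzygy of the $f_i$'s of tri-degree $e_i$ is exactly an element of $J_{(e_i;1)}$, i.e.\ a determinant as in \eqref{eq: inverse and syzygies} of $t$-degree one. First I would isolate the elementary fact that, for a dominant $\phi$, a syzygy of tri-degree $e_i$ exists if and only if $r_i = 1$. Indeed, any element of $J_{(1,0,0;*)}$ has the shape $x_0 A - x_1 B$ with $A,B\in\C[t]$; substituting $t_l\mapsto f_l$ and using that $R$ is a UFD forces $A(f)=x_1 S$, $B(f)=x_0 S$, and since dominance means no nonzero polynomial in the $t_l$ alone lies in $J$, comparing two such elements shows they are $\C[t]$-proportional. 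Thus $J_{(1,0,0;*)}$ has rank one and is generated by the degree-one syzygy, which simultaneously yields the rank-one hypothesis of the Jacobian Dual Criterion for that factor and pins $r_i=1$.

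For the forward direction I would use \eqref{eq: equality on type of inverse} and \textsc{Table} \ref{table: possible types of inverse} to note that a birational $\phi$ satisfies $r_i\le 2$ for every $i$, so its type is exactly $(1,1,1)$, a permutation of $(1,1,2)$, a permutation of $(1,2,2)$, or $(2,2,2)$. The lemma above turns $r_i=1$ into the presence of a syzygy of tri-degree $e_i$ and $r_i=2$ into its absence, which settles the linear part of conditions (1) and (2). For the extra bilinear syzygies required in (3) and (4), and for the splitting property of Condition \ref{condition: (2,2,2)}, I would invoke the explicit minimal resolutions of Propositions \ref{proposition: syzygies (1,1,1)}, \ref{proposition: syzygies (1,1,2)}, \ref{proposition: syzygies (1,2,2)}, and \ref{proposition: syzygies (2,2,2)}, which rest on the orbit classification of §\ref{section: classification} and read off, type by type, precisely the syzygies claimed.

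The sufficiency direction carries the real weight, and the unifying device is this: if from the hypotheses one can build a rational map $\psi\colon\P_\C^3\dashrightarrow\P_\C^1\times\P_\C^1$ with $\psi\circ\phi=\pi_i\times\pi_j$, then $\phi$ is birational, because two points with the same image must agree in their $i$-th and $j$-th coordinates, and $\phi$ is linear --- hence generically injective --- along the remaining $\P_\C^1$-fibre (dominance rules out the degeneration of this line to a point). For condition (1) the three linear syzygies give all three inverse factors directly; for condition (2) the linear syzygies of tri-degrees $e_i,e_j$ produce $\psi$, and $r_k\le 2$ forces the type $(1,1,1)+e_k$. For condition (3) the linear syzygy of tri-degree $e_i$ supplies $a_0,a_1$ with $a_\bullet(f)=x_\bullet s$, and I would write the independent syzygy of tri-degree $e_i+e_j$ as $y_0 P_0+y_1 P_1$ with $P_\bullet$ of tri-degree $e_i$ and linear in $t$, then $P_\bullet=x_0 P_\bullet^{(0)}+x_1 P_\bullet^{(1)}$; setting $b_\bullet:=a_0 P_\bullet^{(0)}+a_1 P_\bullet^{(1)}$ and substituting $t_l\mapsto f_l$, the identity $a_\bullet(f)=x_\bullet s$ clears the $x$-dependence and gives quadratic forms with $(b_0:b_1)(f)=(y_0:y_1)$, hence the second factor and $\psi$. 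Independence of the two syzygies is exactly what guarantees this factor is non-degenerate.

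The main obstacle is condition (4), the type $(2,2,2)$ case, where no linear syzygy is available and the entire inverse must be manufactured from the six bilinear syzygies. Here I expect to lean decisively on the splitting property of Condition \ref{condition: (2,2,2)}: it should let the pair of independent syzygies of tri-degree $e_i+e_j$ be combined so as to cancel the $y$- (resp.\ $x$-) dependence and split off a common linear-in-$t$ factor, thereby producing the quadratic determinant in $J_{(e_i;2)}$ that defines the $i$-th inverse factor; assembling the three factors gives $\psi$ and the injectivity argument closes the case, with the type forced to $(2,2,2)$ by $r_i=2$ for all $i$. Verifying that the splitting property is both available (via Proposition \ref{proposition: syzygies (2,2,2)}) and sufficient for this cancellation, and checking along the way that the four conditions are mutually exclusive so that the type is unambiguously determined, is the delicate part of the argument.
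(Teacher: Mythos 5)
Your proposal is correct and follows essentially the same route as the paper: the theorem is assembled from Propositions \ref{proposition: syzygies (1,1,1)}, \ref{proposition: syzygies (1,1,2)}, \ref{proposition: syzygies (1,2,2)}, and \ref{proposition: syzygies (2,2,2)}, with necessity read off from the classified representatives of \S\ref{section: classification} and sufficiency proved by using the given (bi)linear syzygies to pin a general fibre of $\phi$ into a single line $\{\lambda\}\times\{\mu\}\times\P_\C^1$ on which $\phi$ restricts to an isomorphism. Your explicit rank-one lemma for $J_{(e_i;*)}$ and the global construction of the quadratic second inverse factor $b_\bullet$ in case (3) are only cosmetic variants of the paper's appeal to the Jacobian Dual Criterion and its pointwise evaluation at a general $P\in\P_\C^3$.
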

\vskip0.2cm
The proof of Theorem \ref{theorem: main computational result} follows immediately from Propositions \ref{proposition: syzygies (1,1,1)}, \ref{proposition: syzygies (1,1,2)}, \ref{proposition: syzygies (1,2,2)}, and \ref{proposition: syzygies (2,2,2)}, and it relies strongly on the geometric classification of §\ref{section: classification}. More explicitly, the composition of a rational map with automorphisms of $(\P_\C^1)^3$ and $\P_\C^3$ preserves the structure of the module of first syzygies, with maybe a permutation in the tri-degrees (if this is not preserved by the automorphism of $(\P_\C^1)^3$). Therefore, the syzygies of the entries can be computed for the representatives of the finitely many orbits of the right-action in $\mathfrak{Bir}_{(1,1,1)}$, listed in Theorems \ref{theorem: classification (1,1,1)}, \ref{theorem: classification (1,1,2)}, \ref{theorem: classification (1,2,2)}, and \ref{theorem: classification (2,2,2)}. 
\vskip0.2cm
On the other hand, 
although Theorem \ref{theorem: main computational result} is adequate for computational purposes, we provide further cohomological information about the base loci of tri-linear birational maps. Namely, we will prove that there is a one-to-one correspondence between the type of $\phi$ and the shape of the minimal tri-graded free resolution of the base ideal  $B_\phi$. For the sake of comparison, the minimal free resolution of a tri-linear rational map $\phi$ with general entries is
$$
0\xrightarrow[]{} R \xrightarrow[]{} R^{21} \xrightarrow[]{} R^{62} \xrightarrow[]{} R^{69} \xrightarrow[]{} R ^{30}\xrightarrow[]{} R^4 \xrightarrow[]{} B_\phi \xrightarrow{} 0 \ .
$$
As we shall see, the resolution is remarkably simpler when $\phi$ is birational.

\subsection{Syzygies of birational maps of type $(1,1,1)$}

For birational maps of type $(1,1,1)$, the minimal tri-graded free resolution of the base ideal $B_\phi$ is Hilbert-Burch. 

\begin{proposition}
\label{proposition: syzygies (1,1,1)}
Assume that $\phi$ is dominant. The following are equivalent:
\vskip0.2cm
\begin{enumerate}
\setlength\itemsep{0.5em}
\item $\phi$ is birational of type $(1,1,1)$.
\item The minimal tri-graded free resolution of the base ideal $B_\phi$ is Hilbert-Burch, i.e.
\begin{equation}\label{eq:HBres}
0 \xrightarrow{} 
\begin{matrix}
R(-2,-1,-1)\\
\oplus \\
R(-1,-2,-1) \\
\oplus \\
R(-1,-1,-2)\\
\end{matrix}
\xrightarrow{} R(-1,-1,-1)^4 \xrightarrow{(f_0\ f_1\ f_2\ f_3)} B_\phi
\xrightarrow{} 0\ .
\end{equation}
In particular, the base locus $Z_\phi$ is a Cohen-Macaulay curve.
\item The $f_i$'s have syzygies of tri-degrees $e_1$, $e_2$, and $e_3$.
\end{enumerate}
\end{proposition}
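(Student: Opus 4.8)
The plan is to prove the equivalence $(1) \Leftrightarrow (2) \Leftrightarrow (3)$ by establishing a cycle of implications, exploiting the fact that a Hilbert-Burch resolution is exactly what one expects for a codimension-two Cohen-Macaulay subscheme. First I would prove $(1) \Rightarrow (2)$. Assuming $\phi$ is birational of type $(1,1,1)$, Table \ref{table: possible types of inverse} tells us that $Z_\phi$ has tri-degree $(1,1,1)$, and Proposition \ref{proposition: commutative diagram} gives that $C_\phi$ is connected. The crucial point is to argue that $Z_\phi$ is actually a Cohen-Macaulay curve, i.e. $Z_\phi = C_\phi$ with no embedded or isolated points. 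Once this is known, $B_\phi$ is the saturated ideal of a codimension-two Cohen-Macaulay subscheme of the Cohen-Macaulay ring $R$, so by the Hilbert-Burch theorem its minimal free resolution has length one, with the syzygy module generated by the maximal minors of a $4 \times 3$ matrix. The graded Betti numbers are then forced: since $B_\phi$ is generated in tri-degree $(1,1,1)$ by four forms and $Z_\phi$ has tri-degree $(1,1,1)$, the three first syzygies must sit in tri-degrees $(2,1,1)$, $(1,2,1)$, and $(1,1,2)$ (this is the only way the ranks and the tri-degree of the curve are compatible), giving precisely the shape \eqref{eq:HBres}.

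The implication $(2) \Rightarrow (3)$ is immediate by inspection: reading off the generators of the syzygy module $R(-2,-1,-1) \oplus R(-1,-2,-1) \oplus R(-1,-1,-2)$ and subtracting the degree $(1,1,1)$ of the generators of $B_\phi$, the three syzygies have coefficient tri-degrees $(1,0,0) = e_1$, $(0,1,0) = e_2$, and $(0,0,1) = e_3$ respectively, which is exactly the correspondence recorded in §\ref{subsection: jacobian dual criterion} between elements of $J_{(d_1,d_2,d_3;1)}$ and syzygies of tri-degree $(d_1,d_2,d_3)$.

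The remaining implication $(3) \Rightarrow (1)$ is where the Jacobian Dual Criterion enters. Given a syzygy of tri-degree $e_1$, its coefficients are linear in $x_0,x_1$ and constant in the other variables, so the corresponding element of the Rees ideal lives in $J_{(1,0,0;1)}$; this produces exactly a determinant of the form appearing in the left-hand slot of \eqref{eq: inverse and syzygies} with $r_1 = 1$. The analogous statements hold for $e_2$ and $e_3$. Thus all three modules $J_{(1,0,0;*)}$, $J_{(0,1,0;*)}$, $J_{(0,0,1;*)}$ contain a nonzero element of degree one, so by the multi-graded JDC (§\ref{subsection: jacobian dual criterion}) $\phi$ is birational, and reading off the degrees $r_1 = r_2 = r_3 = 1$ shows the type is $(1,1,1)$. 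I should check that these degree-one elements genuinely force the rank-one condition rather than merely being nonzero; this follows because a rank-two situation would force a common factor or a higher-dimensional base locus, contradicting that $\phi$ is a well-defined rational map.

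The main obstacle I anticipate is the Cohen-Macaulayness claim inside $(1) \Rightarrow (2)$, namely ruling out embedded or isolated points in $Z_\phi$ so that $Z_\phi = C_\phi$. The tri-degree bookkeeping of \eqref{eq: equality on type of inverse} controls the one-dimensional part, but an embedded point would not change the tri-degree and so is not detected by that count alone. I expect the cleanest route is to invoke the earlier structural results: Proposition \ref{proposition: commutative diagram} already identifies $C_\phi$ with one of the connected curves of Lemma \ref{lemma: canonical CM curves} and pins down $\dim_\C W_\phi = 4$, and since the four entries $f_0,\dots,f_3$ are linearly independent elements of $W_\phi = (I_{C_\phi})_{(1,1,1)}$ they must span it, forcing $B_\phi$ and $I_{C_\phi}$ to agree in tri-degree $(1,1,1)$. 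A Hilbert-function comparison between the ideal generated by these four forms and the saturated ideal $I_{C_\phi}$ of the Cohen-Macaulay curve then yields that $B_\phi$ is already saturated and equal to $I_{C_\phi}$, eliminating embedded and isolated points and completing the argument.
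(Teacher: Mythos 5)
Your cycle of implications is the right shape, and your $(2)\Rightarrow(3)$ and $(3)\Rightarrow(1)$ are essentially sound: the only repair needed in $(3)\Rightarrow(1)$ is the reason you give for excluding rank two in the JDC. A second independent element of $J_{(1,0,0;*)}$ would produce a nonzero polynomial in $J_{(0,0,0;*)}$, i.e.\ an algebraic relation among the $f_i$'s, contradicting \emph{dominance}; it has nothing to do with a common factor or with the dimension of the base locus. (The paper is even terser here: it simply writes down the candidate inverse from the three determinantal syzygies and observes it must be $\phi^{-1}$.)

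The genuine gap is in $(1)\Rightarrow(2)$. Your plan is to first show $Z_\phi=C_\phi$ is a Cohen--Macaulay curve, conclude that ``$B_\phi$ is already saturated and equal to $I_{C_\phi}$'', and then invoke Hilbert--Burch. Both steps fail. First, $B_\phi\neq I_{C_\phi}$ in general: for the generic orbit $\rho_1^{(1,1,1)}$ the saturated ideal is $(x_0y_1-x_1y_0,\,x_0z_1-x_1z_0,\,y_0z_1-y_1z_0)$, generated in tri-degrees $(1,1,0)$, $(1,0,1)$, $(0,1,1)$, whereas $B_\phi$ is generated in tri-degree $(1,1,1)$ and has no nonzero elements in tri-degree $(1,1,0)$; so no Hilbert-function comparison can identify the two ideals, and the resolution \eqref{eq:HBres} is a statement about $B_\phi$ itself, not about $I_{C_\phi}$. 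Second, even granting that $Z_\phi$ is a Cohen--Macaulay \emph{scheme}, this does not give $\operatorname{pd}_R(R/B_\phi)=2$: Hilbert--Burch requires the ideal to be perfect of codimension two (equivalently $R/B_\phi$ Cohen--Macaulay), which is a property of the ideal and does not follow from Cohen--Macaulayness of the subscheme (two skew lines are a Cohen--Macaulay curve whose coordinate ring is not Cohen--Macaulay). Indeed the paper's logic runs in the opposite direction: it \emph{constructs} the $4\times 3$ matrix $M$ whose columns are the three syzygies $g_1,g_2,g_3$ furnished by the inverse map, shows $M$ has generic rank $3$ and no kernel, identifies $(f_0,\dots,f_3)$ up to a nonzero scalar with the signed maximal minors of $M$, deduces exactness of the complex from $\operatorname{codim}B_\phi\geq 2$, and only then obtains Cohen--Macaulayness of $Z_\phi$ via Auslander--Buchsbaum as a \emph{consequence}. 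That explicit construction of the Hilbert--Burch matrix from the inverse is the missing idea; without it (or a case-by-case computation of the resolution of the ideal generated by $(I_C)_{(1,1,1)}$ for each curve of Lemma \ref{lemma: canonical CM curves} with $\dim_\C W=4$), your argument does not establish \eqref{eq:HBres}. Your degree bookkeeping forcing the three syzygy degrees to be $(2,1,1)$, $(1,2,1)$, $(1,1,2)$ is salvageable once the Hilbert--Burch structure is known, but it cannot substitute for establishing that structure.
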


\begin{proof}
We first prove that $(1)$ implies $(2)$, so we assume that $\phi$ is birational of type $(1,1,1)$. Maintaining the notation of \eqref{eq: rational inverse} for $\phi^{-1}$, in the Rees ideal $J$ we find the polynomials
\begin{equation}
\label{eq: 111 basic}
g_1 = \begin{vmatrix}
x_0 & x_1 \\
a_0 & a_1
\end{vmatrix}
\ , \
g_2 = \begin{vmatrix}
y_0 & y_1 \\
b_0 & b_1
\end{vmatrix}
\ , \
g_3 = \begin{vmatrix}
z_0 & z_1 \\
c_0 & c_1
\end{vmatrix}
\ .
\end{equation}
Consider the $4\times 3$ matrix $M = M(x_i,y_j,z_k)$, whose $(i,j)$-th entry is the coefficient of $g_j$ in $t_i$. By definition, the tuple $(f_0,f_1,f_2,f_3)$ lies in the cokernel of $M$. Now, write $\Delta_i$ for the $3\times 3$ signed minor obtained by deleting the $i$-th row of $M$. Then, the tuple $(\Delta_0,\Delta_1,\Delta_2,\Delta_3)$ also lies in the cokernel of $M$. 
Moreover, as $\phi$ is birational the specialization of the $g_i$'s at a general point of $X$ yields three independent linear forms, so the matrix $M$ has generically rank $3$. 
In particular $M$ has no kernel in $R^3$, and its cokernel is a free $R$-module of rank one generated by $(\Delta_0,\Delta_1,\Delta_2,\Delta_3)$. As the tuples $(f_0,f_1,f_2,f_3)$ and $(\Delta_0,\Delta_1,\Delta_2,\Delta_3)$ have tri-linear entries, they must differ by a non-zero constant. Therefore, as $\text{codim}\, B_\phi\geq 2$ the complex
\begin{equation}
\label{eq: complex in proof of HB}
0 \xrightarrow{} R^3 \xrightarrow{M} R^4 \xrightarrow{(\Delta_0\,\Delta_1\,\Delta_2\,\Delta_3)} B_\phi \xrightarrow{} 0
\end{equation}
is a minimal free resolution of $B_\phi$. On the other hand, from the Auslander-Buchbaum formula the ring $R/B_\phi$ is Cohen-Macaulay of codimension two (see e.g.~\cite[\S 20.4]{EISEN_COMM}), which implies that $Z_\phi$ is a Cohen-Macaulay curve. 
\vskip0.2cm
Clearly $(2)$ implies $(3)$, so we now assume $(3)$ and prove $(1)$. The syzygies of tri-degree $e_1$, $e_2$, and $e_3$ can be written as
$$
x_0
\,
a_1
-
x_1
\,
a_0
=
\begin{vmatrix}
x_0 & x_1 \\
a_0 & a_1
\end{vmatrix}
\ ,\
y_0
\,
b_1
-
y_1
\,
b_0
=
\begin{vmatrix}
y_0 & y_1 \\
b_0 & b_1
\end{vmatrix}
\ ,\
z_0
\,
c_1
-
z_1
\,
c_0
=
\begin{vmatrix}
z_0 & z_1 \\
c_0 & c_1
\end{vmatrix} 
\ ,
$$
for some linear forms $a_i,b_j,c_k$ in $\C[t_0,t_1,t_2,t_3]$. Then, the rational map from $\P_\C^3$ to $X$ given by
$$(t_0:t_1:t_2:t_3)\mapsto (a_0:a_1)\times(b_0:b_1)\times(c_0:c_1)
$$
must be $\phi^{-1}$, so $\phi$ is birational of type $(1,1,1)$.
\end{proof}

\begin{remark}
Notice that the proof of Proposition \ref{proposition: syzygies (1,1,1)} is independent from the classification of §\ref{section: classification}. In particular, it is valid to use Proposition \ref{proposition: syzygies (1,1,1)} in the proof of Theorem \ref{theorem: classification (1,1,1)}. On the other hand, it follows from the proof that the base locus $Z_\phi$ of a birational map $\phi$ of type $(1,1,1)$ is an arithmetically Cohen-Macaulay curve, i.e$.$ the tri-homogeneous coordinate ring $R/B_\phi$ is Cohen-Macaulay, which is stronger than requiring that $Z_\phi$ is Cohen-Macaulay.
\end{remark}

\subsection{Syzygies of birational maps of type $(1,1,2)$, $(1,2,1)$, and $(2,1,1)$}
\label{subsection: syzygies (1,1,2)}

The study of the syzygies of birational maps of type either $(1,1,2)$, $(1,2,1)$, or $(2,1,1)$ is equivalent, by means of an automorphism of $X$. In particular, the shape of the minimal tri-graded free resolution of the base ideal $B_\phi$ is the same, up to a permutation of the Betti numbers. Thus, we restrict to birational maps of type $(1,1,2)$.

\begin{proposition}
\label{proposition: syzygies (1,1,2)}
Assume that $\phi$ is dominant. The following are equivalent:
\vskip0.2cm
\begin{enumerate}
\setlength\itemsep{0.5em}
\item $\phi$ is birational of type $(1,1,2)$.
\item The minimal tri-graded free resolution of the base ideal $B_\phi$ is 
\begin{equation}
\label{eq: (1,1,2) FFR}
0\rightarrow R(-2,-2,-2) \xrightarrow{} 
\begin{matrix}
R(-2,-1,-1)\\
\oplus \\
R(-1,-2,-1) \\
\oplus \\
R(-2,-1,-2)\\
\oplus \\
R(-1,-2,-2) 
\end{matrix}
\xrightarrow{} R(-1,-1,-1)^4 \xrightarrow{(f_0\ f_1\ f_2\ f_3)} B_\phi
\xrightarrow{} 0\ .
\end{equation}
\item The $f_i$'s have syzygies of tri-degree $e_1$ and $e_2$, but not $e_3$.
\end{enumerate}
\end{proposition}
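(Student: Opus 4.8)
The plan is to establish the cycle $(1)\Rightarrow(2)\Rightarrow(3)\Rightarrow(1)$, using the classification of §\ref{section: classification} for the first step and an elementary fibration argument for the last. The principal difficulty is $(1)\Rightarrow(2)$: unlike the type-$(1,1,1)$ case, the base locus now carries an isolated or embedded point alongside the codimension-two curve $C_\phi$, so $B_\phi$ is not Cohen--Macaulay and the Hilbert--Burch mechanism of Proposition \ref{proposition: syzygies (1,1,1)} is unavailable. Consequently the resolution cannot be produced by a structure theorem and must instead be pinned down by reduction to finitely many orbit representatives.

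To carry out $(1)\Rightarrow(2)$ I would first observe that the minimal tri-graded free resolution of $B_\phi$ is an invariant of the orbit of $\phi$ under the right-action, once the type is held fixed: composition with $\rho\in\Aut(\P_\C^3)$ leaves $B_\phi$ unchanged, composition with an automorphism in $H$ induces a tri-degree-preserving automorphism of $R$, and composition with the transposition of the first two factors of $X$ sends tri-degrees $(a,b,c)\mapsto(b,a,c)$, an operation under which the Betti data of \eqref{eq: (1,1,2) FFR} is symmetric. By Theorem \ref{theorem: classification (1,1,2)} every birational map of type $(1,1,2)$ lies in the orbit of one of $\rho_1^{(1,1,2)},\dots,\rho_5^{(1,1,2)}$, so it suffices to compute the minimal resolution of the base ideal for these five representatives and check that each reproduces \eqref{eq: (1,1,2) FFR}; this is a finite verification in \textsc{Macaulay2}.

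The implication $(2)\Rightarrow(3)$ is read directly off the resolution. The four middle summands map into $R(-1,-1,-1)^4$, so the minimal first syzygies have coefficients of tri-degrees $e_1$, $e_2$, $e_1+e_3$, and $e_2+e_3$; in particular syzygies of tri-degrees $e_1$ and $e_2$ occur. Since each minimal generator of the syzygy module has a strictly positive first or second coordinate, no $R$-linear combination of them can land in tri-degree $e_3=(0,0,1)$, so no syzygy of tri-degree $e_3$ exists.

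For $(3)\Rightarrow(1)$ I would reconstruct the inverse from the two given syzygies. A syzygy of tri-degree $e_1$ is an element $x_0a_1-x_1a_0\in J$ with $a_0,a_1$ linear in $t_0,\dots,t_3$; substituting $t_l\mapsto f_l$ and using coprimality of $x_0,x_1$ yields $a_0(f)=x_0\,s$ and $a_1(f)=x_1\,s$ for some $s\in R$, so the assignment $t\mapsto(a_0:a_1)$ recovers the first coordinate of $X$, and the syzygy of tri-degree $e_2$ recovers the second. These assemble into a partial inverse $\psi\colon\P_\C^3\dashrightarrow\P_\C^1\times\P_\C^1$ with $\psi\circ\phi=\pi_{12}$ on a dense open set, where $\pi_{12}$ denotes projection of $X$ onto its first two factors. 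Because the $f_l$ are tri-linear, the restriction of $\phi$ to a general fibre $\{p\}\times\{q\}\times\P_\C^1$ of $\pi_{12}$ is cut out by four linear forms in $z_0,z_1$; were these proportional for general $(p,q)$ the image of $\phi$ would be at most two-dimensional, contradicting dominance, so $\phi$ maps the general fibre isomorphically onto a line. As $\psi\circ\phi=\pi_{12}$ separates distinct fibres while $\phi$ is injective along a general one, $\phi$ is generically injective, hence birational with $r_1=r_2=1$. By \textsc{Table} \ref{table: possible types of inverse} the type is then $(1,1,1)$ or $(1,1,2)$, and the absence of a syzygy of tri-degree $e_3$ rules out $(1,1,1)$ by Proposition \ref{proposition: syzygies (1,1,1)}, leaving type $(1,1,2)$.
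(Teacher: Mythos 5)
Your proposal is correct and follows essentially the same route as the paper: $(1)\Rightarrow(2)$ by reducing to the orbit representatives of Theorem \ref{theorem: classification (1,1,2)} and verifying the resolution in \textsc{Macaulay2}, $(2)\Rightarrow(3)$ by inspection of the tri-degrees in \eqref{eq: (1,1,2) FFR}, and $(3)\Rightarrow(1)$ by using the two syzygies to pin down the first two coordinates of a general fibre and dominance to see that $\phi$ restricts to an isomorphism on the residual $\P_\C^1$. Your extra remarks — that the Betti data is symmetric under swapping the first two factors, and that the type is forced to be $(1,1,2)$ via \textsc{Table} \ref{table: possible types of inverse} and Proposition \ref{proposition: syzygies (1,1,1)} — only make explicit what the paper leaves implicit.
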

\begin{proof}
The implication from $(1)$ to $(2)$ follows from the computation of the minimal tri-graded free resolutions of the base ideal of all the representatives listed in Theorem \ref{theorem: classification (1,1,2)} (performed with the help of  \textsc{Macaulay2}). Additionally, it is immediate that $(2)$ implies $(3)$. 
\vskip0.2cm
Now, we assume $(3)$. The syzygies of tri-degree $e_1$ and $e_2$ can be respectively written as
\begin{equation}
\label{eq: proof (1,1,2)}
x_0
\,
a_1
-
x_1
\,
a_0
=
\begin{vmatrix}
x_0 & x_1 \\
a_0 & a_1
\end{vmatrix}
\ ,\
y_0
\,
b_1
-
y_1
\,
b_0
=
\begin{vmatrix}
y_0 & y_1 \\
b_0 & b_1
\end{vmatrix}
\ ,
\end{equation}
for some linear forms $a_i,b_j$ in $\C[t_0,t_1,t_2,t_3]$. Let $P$ be a general point in $\P_\C^3$, and $(\lambda_0:\lambda_1)$, $(\mu_0:\mu_1)\in\P_\C^1$ be the projective points such that
$$
\begin{vmatrix}
\lambda_0 & \lambda_1 \\
a_0(P) & a_1(P)
\end{vmatrix}
=
0
\ ,\ 
\begin{vmatrix}
\mu_0 & \mu_1 \\
b_0(P) & b_1(P)
\end{vmatrix}
=
0
\ .
$$
Thus, any point $Q$ in the pullback $\phi^{-1}(P)$ is of the form $Q=(\lambda_0:\lambda_1)\times (\mu_0:\mu_1)\times (z_0:z_1)$ for some $(z_0:z_1)\in\P_\C^1$, as the point $Q\times P$ in $X\times \P_\C^3$ must vanish the polynomials in \eqref{eq: proof (1,1,2)}. 
Moreover, as $\phi$ is dominant the restriction given by
\begin{eqnarray}
\label{eq: restriction of phi}
	\phi' : (\lambda_0:\lambda_1)\times (\mu_0:\mu_1) \times \P_\C^1 & \dasharrow & \P_\C^3 \\ \nonumber
 (z_0:z_1) & \mapsto & (f'_0:f'_1:f'_2:f'_3)\ ,
\end{eqnarray}
where $f'_i(z_0,z_1)= f_i(\lambda_0,\lambda_1,\mu_0,\mu_1,z_0,z_1)$, is an isomorphism of projective lines. In particular, there is a unique point in the pullback $\phi^{-1}(P)$, implying that $\phi$ is birational. 
\vskip0.2cm
The syzygies of tri-degree $e_1$ and $e_2$ determine $\phi^{-1}$ on the first two factors of $X$. Moreover there is no syzygy of tri-degree $e_3$, so the type of $\phi$ must be $(1,1,2)$.
\end{proof}

\subsection{Syzygies of birational maps of type $(1,2,2)$, $(2,1,2)$, and $(2,2,1)$}
\label{subsection: syzygies (1,2,2)}

Similarly to §\ref{subsection: syzygies (1,1,2)}, the analysis of the syzygies of birational maps of type either $(1,2,2)$, $(2,1,2)$, or $(2,2,1)$ is equivalent, and the minimal tri-graded free resolution of the base ideal $B_\phi$ is the same up to permutation of the Betti numbers. Thus, we restrict to birational maps of type $(1,2,2)$.

\begin{proposition}
\label{proposition: syzygies (1,2,2)}
Assume that $\phi$ is dominant. The following are equivalent:
\vskip0.2cm
\begin{enumerate}
\setlength\itemsep{0.5em}
\item $\phi$ is birational of type $(1,2,2)$.
\item The minimal tri-graded free resolution of the base ideal $B_\phi$ is 
\begin{equation}
\label{eq: (1,2,2) FFR}
0\rightarrow R(-2,-2,-2)^2 \xrightarrow{} 
\begin{matrix}
R(-2,-1,-1)\\
\oplus \\
R(-2,-2,-1) \\
\oplus \\
R(-2,-1,-2)\\
\oplus \\
R(-1,-2,-2)^2
\end{matrix}
\xrightarrow{} R(-1,-1,-1)^4 \xrightarrow{(f_0\ f_1\ f_2\ f_3)} B_\phi
\xrightarrow{} 0\ .
\end{equation}
\item The $f_i$'s have a syzygy of tri-degree $e_1$, but neither $e_2$ nor $e_3$. Moreover, the $f_i$'s also have a syzygy of tri-degree either $e_1 + e_2$ or $e_1 + e_3$, independent from the first one.
\end{enumerate}
\end{proposition}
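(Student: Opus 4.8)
The plan is to prove the cycle of implications $(1) \Rightarrow (2) \Rightarrow (3) \Rightarrow (1)$, following the template already used for Proposition \ref{proposition: syzygies (1,1,2)}. For $(1) \Rightarrow (2)$, I would invoke the fact that composition with an automorphism of $\P_\C^3$ and the right-action of $\Aut(X)$ preserve the minimal tri-graded free resolution up to a permutation of the Betti numbers; hence it is enough to check that \eqref{eq: (1,2,2) FFR} is the resolution of $B_\phi$ on the eight orbit representatives $\rho_1^{(1,2,2)},\dots,\rho_8^{(1,2,2)}$ supplied by Theorem \ref{theorem: classification (1,2,2)}. This is a direct \textsc{Macaulay2} verification, exactly as in the type $(1,1,2)$ case.

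The implication $(2) \Rightarrow (3)$ is read off from the middle term of \eqref{eq: (1,2,2) FFR}, using that a minimal first syzygy whose coefficients have tri-degree $(d_1,d_2,d_3)$ corresponds to a summand $R(-d_1-1,-d_2-1,-d_3-1)$. Thus $R(-2,-1,-1)$ yields a syzygy of tri-degree $e_1$, whereas the absence of $R(-1,-2,-1)$ and $R(-1,-1,-2)$ excludes syzygies of tri-degrees $e_2$ and $e_3$ (no product of the listed generators, all of first-coordinate at least $1$, can land in those tri-degrees, so a syzygy there would force a missing minimal generator). Finally $R(-2,-2,-1)$ and $R(-2,-1,-2)$ are minimal, hence independent, generators of tri-degrees $e_1+e_2$ and $e_1+e_3$.

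The core of the argument is $(3) \Rightarrow (1)$, which I would prove by recovering $\phi^{-1}$ coordinate by coordinate. The syzygy of tri-degree $e_1$ can be written $x_0 a_1 - x_1 a_0$ with $a_i \in \C[t_0,t_1,t_2,t_3]$ linear, forcing $(x_0:x_1) = (a_0:a_1)$ on the graph of $\phi$; dominance makes $a_0, a_1$ linearly independent, so the first coordinate of the candidate inverse is a linear map $\P_\C^3 \dashrightarrow \P_\C^1$. Writing the independent syzygy of tri-degree $e_1+e_2$ as $\sum_{p,q} L_{pq}(t)\, x_p y_q$ with $L_{pq}$ linear and substituting $(x_0:x_1) = (a_0:a_1)$, I obtain a relation $b_1 y_0 - b_0 y_1 = 0$, i.e. $(y_0:y_1) = (b_0:b_1)$, with $b_0, b_1$ quadratic in the $t_i$. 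The key point is that $(b_0,b_1)\neq(0,0)$, and this follows precisely from independence: if both vanished, a short computation shows the $e_1+e_2$ syzygy would be a multiple of the $e_1$ syzygy by a linear form in $y_0,y_1$, contradicting independence. So the second coordinate of the inverse is genuinely determined and has degree two.

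With $(x_0:x_1)$ and $(y_0:y_1)$ pinned down by $P$, any preimage of a general $P\in\P_\C^3$ lies on the line $L = \{(a_0(P):a_1(P))\}\times\{(b_0(P):b_1(P))\}\times\P_\C^1$; restricting $\phi$ to $L$ gives four forms linear in $z_0,z_1$, a map that is non-constant by dominance and hence a degree-one embedding, so the fiber is a single point and $\phi$ is birational. For the type, $r_1=1$ and $r_2=2$ (the value $r_2=1$ being excluded by the absence of an $e_2$ syzygy), while $r_3=2$ follows from $(r_1,r_2,r_3)+\text{tri-deg}(Z_\phi)=(2,2,2)$ of \eqref{eq: equality on type of inverse}; the $e_1+e_3$ alternative reduces to this one by the automorphism of $X$ swapping the second and third factors, which swaps the symmetric slots of the type $(1,2,2)$. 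The main obstacle I anticipate is exactly the bookkeeping that independence of the second syzygy guarantees a nonzero, non-constant determination of the $y$-coordinate, together with confirming that the resulting type is $(1,2,2)$ and not a further degeneration.
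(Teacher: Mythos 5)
Your proposal is correct and follows essentially the same route as the paper: $(1)\Rightarrow(2)$ by a \textsc{Macaulay2} check on the orbit representatives of Theorem \ref{theorem: classification (1,2,2)}, $(2)\Rightarrow(3)$ by reading the middle term of the resolution, and $(3)\Rightarrow(1)$ by using the $e_1$ and $e_1+e_2$ syzygies to pin down the first two coordinates of a general fiber and then arguing that the restriction of $\phi$ to the resulting line $\P_\C^1$ is an isomorphism. Your explicit verification that independence of the second syzygy forces $(b_0,b_1)\neq(0,0)$ is a point the paper leaves implicit, but it does not change the argument.
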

\begin{proof}
The implication from $(1)$ to $(2)$ follows from the computation of the minimal tri-graded free resolutions of the base ideal of all the representatives listed in Theorem \ref{theorem: classification (1,2,2)} (performed with the help of  \textsc{Macaulay2}). Additionally, it is immediate that $(2)$ implies $(3)$. 
\vskip0.2cm
Now, we assume $(3)$. Without loss of generality, we suppose that the $f_i$'s have a syzygy of tri-degree $e_1 + e_2$. Thus, the syzygies of tri-degree $e_1$ and $e_1+e_2$ can be respectively written as
\begin{gather}
\label{eq: proof (1,2,2), 1}
x_0
\,
a_1
-
x_1
\,
a_0
=
\begin{vmatrix}
x_0 & x_1 \\
a_0 & a_1
\end{vmatrix}
\ ,
\\
\label{eq: proof (1,2,2), 2}
y_0
\,
(
x_0
\,
h_{01}
+
x_1
h_{11}
)
-
y_1
\,
(
x_0
\,
h_{00}
+
x_1
h_{10}
)
=
\begin{vmatrix}
y_0 & y_1 \\
x_0
\,
h_{00}
+
x_1
h_{10}
&
x_0
\,
h_{01}
+
x_1
h_{11}
\end{vmatrix}
\ ,
\end{gather}
for some linear forms $a_i,h_{ij}$ in $\C[t_0,t_1,t_2,t_3]$. Let $P$ be a general point in $\P_\C^3$, and $(\lambda_0:\lambda_1)\in\P_\C^1$ be the projective point such that
$$
\begin{vmatrix}
\lambda_0 & \lambda_1 \\
a_0(P) & a_1(P)
\end{vmatrix}
=
0
\ .
$$
Similarly, write $(\mu_0:\mu_1)\in\P_\C^1$ for the projective point such that
$$
\begin{vmatrix}
\mu_0 & \mu_1 \\
\lambda_0
\,
h_{00}(P)
+
\lambda_1
\,
h_{10}(P)
&
\lambda_0
\,
h_{01}(P)
+
\lambda_1
\,
h_{11}(P)
\end{vmatrix}
=
0
\ .
$$
Thus, any point $Q$ in the pullback $\phi^{-1}(P)$ is of the form $Q=(\lambda_0:\lambda_1)\times (\mu_0:\mu_1)\times (z_0:z_1)$ for some $(z_0:z_1)\in\P_\C^1$, as the point $Q\times P$ in $X\times \P_\C^3$ must vanish the polynomials  \eqref{eq: proof (1,2,2), 1} and \eqref{eq: proof (1,2,2), 2}. Repeating the argument in the proof of Proposition \ref{proposition: syzygies (1,1,2)} with the restriction map $\phi'$, it follows that there is a unique point in the pullback $\phi^{-1}(P)$, so $\phi$ is birational.
\vskip0.2cm 
The syzygy of tri-degree $e_1$ determines $\phi^{-1}$ on the first factor of $X$. On the other hand, there are no syzygies of tri-degree $e_2$ nor $e_3$, so the type of $\phi$ must be $(1,2,2)$.
\end{proof}

\subsection{Syzygies of birational maps of type $(2,2,2)$}
\label{subsection: syzygies (2,2,2)}

Finally, we study the syzygies of birational maps of type $(2,2,2)$. In this case, the entries of $\phi$ do not have syzygies of tri-degree $e_1,e_2$, or $e_3$, and the analysis is more complicated. Moreover, the tri-degree of the syzygies alone is not enough to decide birationality, as it is shown by Example \ref{example: (2,2,2)}. However, from Proposition \ref{proposition: syzygies (2,2,2)} it follows that birationality can still be decided from the first syzygies, by checking a splitting condition on three related polynomials. 
Before stating this characterization, we first establish some hypotheses on the first syzygies.

\begin{setup}
\label{setup: (2,2,2)}
Assume that the $f_i$'s do not have syzygies of tri-degree $e_1$, $e_2$, or $e_3$ but have a pair of independent syzygies in each of the tri-degrees $e_1+e_2$, $e_1+e_3$, and $e_2+e_3$. In particular, the syzygies of tri-degree $e_1+e_2$ can be written as
\begin{equation}
\label{eq: in setup (2,2,2)}
x_0
y_0
\,
g_{00}
+
x_1
y_0
\,
g_{10}
-
x_0
y_1
\,
g_{01}
-
x_1
y_1
\,
g_{11}
\ ,\ 
x_0
y_0
\,
h_{00}
+
x_1
y_0
\,
h_{10}
-
x_0
y_1
\,
h_{01}
-
x_1
y_1
\,
h_{11}
\ ,
\end{equation}
for some linear forms $g_{ij},h_{ij}$ in $\C[t_0,t_1,t_2,t_3]$. We define $u_{xy} = u_{xy}(x_i,t_j)$ as
$$
u_{xy}
\coloneqq
\begin{vmatrix}
x_0
\,
g_{00}
+
x_1
g_{10}
&
x_0
\,
g_{01}
+
x_1
g_{11}
\\
x_0
\,
h_{00}
+
x_1
h_{10}
&
x_0
\,
h_{01}
+
x_1
h_{11}
\end{vmatrix}
\ .
$$
The polynomial $u_{xy}$ 
is non-zero, as the syzygies in \eqref{eq: in setup (2,2,2)} are independent by assumption. 
\newline Moreover, we have
\begin{gather*}
\begin{pmatrix}
x_0
\,
g_{00}
+
x_1
g_{10}
&
x_0
\,
g_{01}
+
x_1
g_{11}
\\
x_0
\,
h_{00}
+
x_1
h_{10}
&
x_0
\,
h_{01}
+
x_1
h_{11}
\end{pmatrix}
\begin{pmatrix}
y_0
\\
y_1
\end{pmatrix}
\xmapsto{t_j \mapsto f_j}
\begin{pmatrix}
0
\\
0
\end{pmatrix}
\end{gather*}
implying that $u_{xy}$
is a relation in the Rees ideal $J$ of multi-degree $(2,0,0;2)$. 
Similarly, we define the relations $u_{yz},u_{zx}$ from the pair of syzygies of tri-degree $e_2+e_3$ and $e_1+e_3$, respectively. In particular, the multi-degree in $J$ of $u_{yz}$ (resp$.$ $u_{zx}$) is $(0,2,0;2)$ (resp$.$ $(0,0,2;2)$).
\end{setup}

\begin{condition}
\label{condition: (2,2,2)}
Assuming Setup \ref{setup: (2,2,2)}, at least two of the $u_{xy},u_{yz},$ and $u_{zx}$ have a linear factor in $\{x_0,x_1\}$, $\{y_0,y_1\}$, and $\{z_0,z_1\}$, respectively.
\end{condition}

\vskip0.2cm

\begin{proposition}
\label{proposition: syzygies (2,2,2)}
Assume that $\phi$ is dominant. The following are equivalent:
\vskip0.2cm
\begin{enumerate}
\setlength\itemsep{0.5em}
\item $\phi$ is birational of type $(2,2,2)$.
\item The $f_i$'s satisfy Condition \ref{condition: (2,2,2)}. In this case, the minimal tri-graded free resolution of the base ideal $B_\phi$ is 
\begin{equation}
\label{eq: (2,2,2) FFR}
0\rightarrow R(-2,-2,-2)^3 \xrightarrow{} 
\begin{matrix}
R(-2,-2,-1)^2\\
\oplus \\
R(-2,-1,-2)^2\\
\oplus \\
R(-1,-2,-2)^2
\end{matrix}
\xrightarrow{} R(-1,-1,-1)^4 \xrightarrow{(f_0\ f_1\ f_2\ f_3)} B_\phi
\xrightarrow{} 0\ .
\end{equation}
\end{enumerate}
\end{proposition}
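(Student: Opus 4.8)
The plan is to establish the two implications separately, following the template of Propositions~\ref{proposition: syzygies (1,1,2)} and~\ref{proposition: syzygies (1,2,2)}. For $(1)\Rightarrow(2)$ I would invoke the classification: by Theorem~\ref{theorem: classification (2,2,2)} every birational $\phi$ of type $(2,2,2)$ lies, up to the right-action and composition with an automorphism of $\P_\C^3$, in the orbit of $\rho_1^{(2,2,2)}$ or $\rho_2^{(2,2,2)}$. As noted before Theorem~\ref{theorem: main computational result}, these operations preserve the minimal tri-graded free resolution of $B_\phi$ up to a permutation of the tri-degrees, and they transform the syzygies of tri-degree $e_i+e_j$—hence the polynomials $u_{xy},u_{yz},u_{zx}$ of Setup~\ref{setup: (2,2,2)} and their factorizations—compatibly. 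It therefore suffices to verify Condition~\ref{condition: (2,2,2)} and the resolution~\eqref{eq: (2,2,2) FFR} on the two representatives, a finite computation in \textsc{Macaulay2}.

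For $(2)\Rightarrow(1)$ I would assume Condition~\ref{condition: (2,2,2)}; in particular Setup~\ref{setup: (2,2,2)} holds, so the $f_i$ have no syzygy of tri-degree $e_1,e_2$, or $e_3$, and the nonzero relations $u_{xy},u_{yz},u_{zx}\in J$ of multidegrees $(2,0,0;2)$, $(0,2,0;2)$, $(0,0,2;2)$ are defined. Permuting the factors of $X$ if necessary, assume $u_{xy}$ and $u_{yz}$ are the two that split. The first key step is to promote each splitting to an honest element of $J$ in the correct graded piece. Writing $u_{xy}=\ell\cdot w$ with $\ell$ of degree one in $\{x_0,x_1\}$, I would use that $J$ is prime, that $J\cap R=0$, and that $J_{(1,0,0;1)}=0$ to rule out the balanced split, in which both factors would sit in multidegree $(1,0,0;1)$ and primality would force one of them into $J_{(1,0,0;1)}=0$; the remaining splittings all produce, via primality, a nonzero element of $J_{(1,0,0;2)}$ that is linear in $x$, which I denote $w$. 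Symmetrically $u_{yz}$ yields a nonzero $w'\in J_{(0,1,0;2)}$ linear in $y$.

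With $w,w'$ in hand I would run the pullback argument of Proposition~\ref{proposition: syzygies (1,1,2)}. For a general $P\in\P_\C^3$, any point $Q=(x)\times(y)\times(z)$ of $\phi^{-1}(P)$ satisfies $w(x;P)=0$ and $w'(y;P)=0$; as these are nonzero linear forms in $x$ and in $y$ for general $P$, the coordinates $(x_0:x_1)$ and $(y_0:y_1)$ of $Q$ are determined uniquely, say equal to $(\lambda_0:\lambda_1)$ and $(\mu_0:\mu_1)$. Restricting $\phi$ to $(\lambda_0:\lambda_1)\times(\mu_0:\mu_1)\times\P_\C^1$ gives entries $f_i(\lambda,\mu,z_0,z_1)$ that are linear in $z$, so, exactly as in Proposition~\ref{proposition: syzygies (1,1,2)}, dominance makes this restriction an isomorphism onto a line and pins down $(z_0:z_1)$. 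Thus $\phi^{-1}(P)$ is a single point and $\phi$ is birational. Since there is no syzygy of tri-degree $e_1,e_2$, or $e_3$, Propositions~\ref{proposition: syzygies (1,1,1)},~\ref{proposition: syzygies (1,1,2)}, and~\ref{proposition: syzygies (1,2,2)} together with Table~\ref{table: possible types of inverse} force the type to be $(2,2,2)$; the resolution~\eqref{eq: (2,2,2) FFR} then follows from the implication $(1)\Rightarrow(2)$.

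I expect the main obstacle to be precisely this extraction of the linear-in-$x$ and linear-in-$y$ relations $w,w'$ from the factorizations of $u_{xy}$ and $u_{yz}$: this is the only place Condition~\ref{condition: (2,2,2)} enters, and it requires combining primality of $J$, the vanishing $J_{(e_i;1)}=0$, and the bookkeeping of $t$-degrees to rule out degenerate factorizations. That this splitting is genuinely needed, and does not follow from Setup~\ref{setup: (2,2,2)} alone, is exactly what Example~\ref{example: (2,2,2)} shows; and requiring two of the three $u$'s to split, rather than one, is what lets the tri-linearity of the $f_i$ control the remaining coordinate and close the argument.
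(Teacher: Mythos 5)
Your proposal is correct and follows essentially the same route as the paper: $(1)\Rightarrow(2)$ by reducing to the finitely many representatives of Theorem \ref{theorem: classification (2,2,2)} and computing in \textsc{Macaulay2}, and $(2)\Rightarrow(1)$ by factoring $u_{xy}$ and $u_{yz}$, pinning down the first two coordinates of a general fiber, and closing with the restriction argument of Proposition \ref{proposition: syzygies (1,1,2)}. The only divergence is cosmetic: the paper does not pass through primality of $J$ to place the cofactor $w$ in $J_{(1,0,0;2)}$ (it simply works on the open set where the linear factors $l_1,l_2$ are non-vanishing), and your worry about a ``balanced split'' is moot since Condition \ref{condition: (2,2,2)} already demands a factor lying purely in $\{x_0,x_1\}$, of multidegree $(1,0,0;0)$.
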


\begin{proof}
The implication from $(1)$ to $(2)$ follows from the computation of the minimal tri-graded free resolutions of the base ideal of all the representatives listed in Theorem \ref{theorem: classification (2,2,2)}, and the polynomials $u_{xy}$, $u_{yz}$, and $u_{zx}$ always have a linear factor as in Condition \ref{condition: (2,2,2)} (these computations have been performed with the help of \textsc{Macaulay2}).
\vskip0.2cm
Now, we assume Condition \ref{condition: (2,2,2)}. 
Without loss of generality, the polynomials $u_{xy}$ and $u_{yz}$ have linear factors $l_1=l_1(x_0,x_1)$ and $l_2=l_2(y_0,y_1)$, i.e$.$ we can write
\begin{equation}
\label{eq: (2,2,2) proof, relations}
u_{xy}
=
l_1(x_0,x_1)
\,
\begin{vmatrix}
x_0 & x_1 \\
a_0 & a_1
\end{vmatrix}
\ ,\
u_{yz}
=
l_2(y_0,y_1)
\,
\begin{vmatrix}
y_0 & y_1 \\
b_0 & b_1
\end{vmatrix}
\ ,
\end{equation}
for some quadratic forms $a_i,b_j$ in $\C[t_0,t_1,t_2,t_3]$. Let $P$ given a general point in $\P_\C^3$. We now prove that $\phi^{-1}(P)$ has a unique preimage in the open set $U\subset X$ where $l_1$ and $l_2$ are both non-zero. 
Given $Q\in U\cap\phi^{-1}(P)$, the point $Q\times P\in U\times \P_\C^3$ must vanish the two polynomials in \eqref{eq: (2,2,2) proof, relations}. By assumption we have $l_1(Q)\not=0$ and $l_2(Q)\not=0$, implying that $Q\times P$ must vanish the second factor in each polynomial. In particular, $Q$ must be of the form 
$Q=(\lambda_0:\lambda_1)\times (\mu_0:\mu_1)\times (z_0:z_1)$ for some $(z_0:z_1)\in\P_\C^1$, where $(\lambda_0:\lambda_1),(\mu_0:\mu_1)\in\P_\C^1$ are the projective points satisfying 
$$
\begin{vmatrix}
\lambda_0 & \lambda_1 \\
a_0(P) & a_1(P)
\end{vmatrix}
=
0
\ ,\ 
\begin{vmatrix}
\mu_0 & \mu_1 \\
b_0(P) & b_1(P)
\end{vmatrix}
=
0
\ .
$$
Repeating the argument in the proof of Proposition \ref{proposition: syzygies (1,1,2)} with the restriction map $\phi'$, it follows that the pullback $\phi^{-1}(P)$ defines a unique point in $U$, so $\phi$ is birational. On the other hand there are no syzygies of tri-degree $e_1,e_2$ or $e_3$, so the type of $\phi$ must be $(2,2,2)$.
\end{proof}

\begin{example}
\label{example: (2,2,2)}
From Proposition \ref{proposition: syzygies (2,2,2)}, the minimal tri-graded free resolution of the base ideal of a tri-linear birational map of type $(2,2,2)$ is always \eqref{eq: (2,2,2) FFR}. However, given the rational map
\begin{gather*}
	\phi : \mathbb{P}_{\mathbb{C}}^1 \times \mathbb{P}_{\mathbb{C}}^1 \times \mathbb{P}_{\mathbb{C}}^1 \dashrightarrow \mathbb{P}_{\mathbb{C}}^3 \\
	(x_0:x_1)\times (y_0:y_1)\times (z_0:z_1) \mapsto ( x_0 y_0 z_1 : x_1 y_1 z_0 + x_1 y_0 z_1 + x_0 y_1 z_1 + x_1 y_1 z_1 : x_0 y_1 z_0 : x_1 y_0 z_0 )
\end{gather*}
the minimal tri-graded free resolution of $B_\phi$ is also as \eqref{eq: (2,2,2) FFR}, but $\phi$ is not birational as Condition \ref{condition: (2,2,2)} is not satisfied.
\end{example}

\vskip0.4cm

\printbibliography

\end{document}